\numberwithin{equation}{section}
\newtheorem{theorem}{Theorem}[section]
\newtheorem{lemma}[theorem]{Lemma}
\newtheorem{corollary}[theorem]{Corollary}
\newtheorem{proposition}[theorem]{Proposition}
\theoremstyle{definition}
\newtheorem{remark}[theorem]{Remark}
\newtheorem{remarks}[theorem]{Remarks}
\DeclareMathOperator{\re}{Re}
\DeclareMathOperator{\im}{Im}
\DeclareMathOperator{\sign}{sign}
\DeclareMathOperator{\ind}{\mathds{1}}
\newcommand{\cm}{\mathscr{CM}}
\newcommand{\am}{\mathscr{AM}}
\newcommand{\amcm}{\am\text{-}\cm}
\newcommand{\laplace}{\mathscr{L}}
\newcommand{\C}{\mathds{C}}
\newcommand{\R}{\mathds{R}}
\newcommand{\Z}{\mathds{Z}}
\newcommand{\ph}{\varphi}
\renewcommand{\le}{\leqslant}
\renewcommand{\ge}{\geqslant}
\newcommand{\formula}[2][nolabel]%
{%
 \ifthenelse{\equal{#1}{nolabel}}%
 {\begin{align*} #2 \end{align*}}%
 {%
  \ifthenelse{\equal{#1}{}}%
  {\begin{align} #2 \end{align}}%
  {\begin{align} \label{#1} #2 \end{align}}%
 }%
}
\begin{document}

\title[Characterisation of the class of bell-shaped functions]{Characterisation of the class \\ of bell-shaped functions}
\author{Mateusz Kwaśnicki, Thomas Simon}
\thanks{Work supported by the Polish National Science Centre (NCN) grant no.\@ 2015/19/B/ST1/01457}
\address{Mateusz Kwaśnicki \\ Faculty of Pure and Applied Mathematics \\ Wrocław University of Science and Technology \\ ul. Wybrzeże Wyspiańskiego 27 \\ 50-370 Wrocław, Poland}
\email{mateusz.kwasnicki@pwr.edu.pl}
\address{Thomas Simon \\ Laboratoire Paul Painlevé \\ Université de Lille \\ Cité Scientifique \\ F-59655 Villeneuve d'Ascq Cedex \\ France}
\email{thomas.simon@univ-lille.fr}
\keywords{Primary: 26A51, 60E07. Secondary: 60E10, 60G51.}
\subjclass[2010]{Bell-shape, Pólya frequency function, completely monotone function, absolutely monotone function, Stieltjes function, generalised gamma convolution, Port's inversion formula}

\begin{abstract}
A non-negative function $f$ is said to be \emph{bell-shaped} if $f$ tends to zero at $\pm \infty$ and the $n$-th derivative of $f$ changes its sign $n$ times for every $n = 0, 1, 2, \ldots$\, We provide a complete characterisation of the class of bell-shaped functions: we prove that every bell-shaped function is a convolution of a \emph{Pólya frequency function} and an \emph{absolutely monotone-then-completely monotone} function. An equivalent condition in terms of the holomorphic extension of the Fourier transform is also given. As a corollary, various properties of bell-shaped functions follow. In particular, we prove that bell-shaped probability distributions are infinitely divisible, and that the zeroes of the $n$-th derivative of a bell-shaped function grow at a linear rate as $n \to \infty$.
\end{abstract}

\maketitle

%
%

\section{Introduction}
\label{sec:intro}

\subsection{Bell-shaped functions}
\label{sec:intro:bs1}

A smooth function $f : \R \to [0, \infty)$ is said to be \emph{bell-shaped} if it converges to zero at $\pm\infty$ and if for every $n = 0, 1, 2, \ldots$ the $n$-th derivative $f^{(n)}$ of $f$ changes its sign exactly $n$ times. The notion of a bell-shaped function originated in the theory of games (see Section~6.11.C in~\cite{karlin}). The prototypical example is the Gauss--Weierstrass kernel $G_t(x) = (4 \pi t)^{-1/2} e^{-x^2 / (4 t)}$. Other examples include density functions of the Cauchy distribution: $\pi^{-1} (1 + x^2)^{-1}$, the Lévy distribution: $\pi^{-1/2} x^{-3/2} e^{-1/x} \ind_{(0, \infty)}(x)$, and the hyperbolic secant distribution: $(\pi \cosh x)^{-1}$. More generally, as can be easily verified by an explicit calculation, for every $p > 0$, the functions $(1 + x^2)^{-p}$, $x^{-p} e^{-1/x} \ind_{(0, \infty)}(x)$ and $(\cosh h)^{-p}$ are bell-shaped. It is much less obvious to prove that $(1 + x^2)^{-1} (p^2 + x^2)^{-1}$ is bell-shaped whenever $p > 0$; however, $(1 + x^2)^{-1} (9 + x^2)^{-1} (16 + x^2)^{-1}$ is not bell-shaped; see Section~6.5 in~\cite{bell}.

I.J.~Schoenberg conjectured that there are no compactly supported bell-shaped functions. This was proved to be true by I.I.~Hirschman in~\cite{hirschman}. The results of I.J.~Schoenberg on total positivity imply that P\'olya frequency functions, if smooth, are bell-shaped; see~\cite{hw}. W.~Gawronski claimed in~\cite{gawronski} that density functions of stable distributions are bell-shaped; however, his argument contained an error, which rendered the proof incorrect unless the index of stability was in the set $\{2, 1, \tfrac{1}{2}, \tfrac{1}{3}, \tfrac{1}{4}, \ldots\}$. A rigorous proof for density functions of all stable distributions concentrated on a half-line was given by the second author in~\cite{simon}, and an extension to distributuions of hitting times of (generalised) diffusions was soon given in~\cite{js} by W.~Jedidi and the second author.

A broad class of bell-shaped functions was desribed recently by the first author in~\cite{bell}. Following~\cite{bell}, we use the term \emph{strictly bell-shaped} for the notion of bell-shape introduced above, and we say that $f$ is a \emph{weakly bell-shaped function} if $f$ is a non-negative measure such that the convolution of $f$ with every Gaussian $G_t$ is strictly bell-shaped. Every strictly bell-shaped function is weakly bell-shaped, and every smooth weakly bell-shaped function is bell-shaped; see Corollary~4.4 in~\cite{bell}. The following is the main result of~\cite{bell}.

\begin{figure}
\centering
\includegraphics[width=0.8\textwidth]{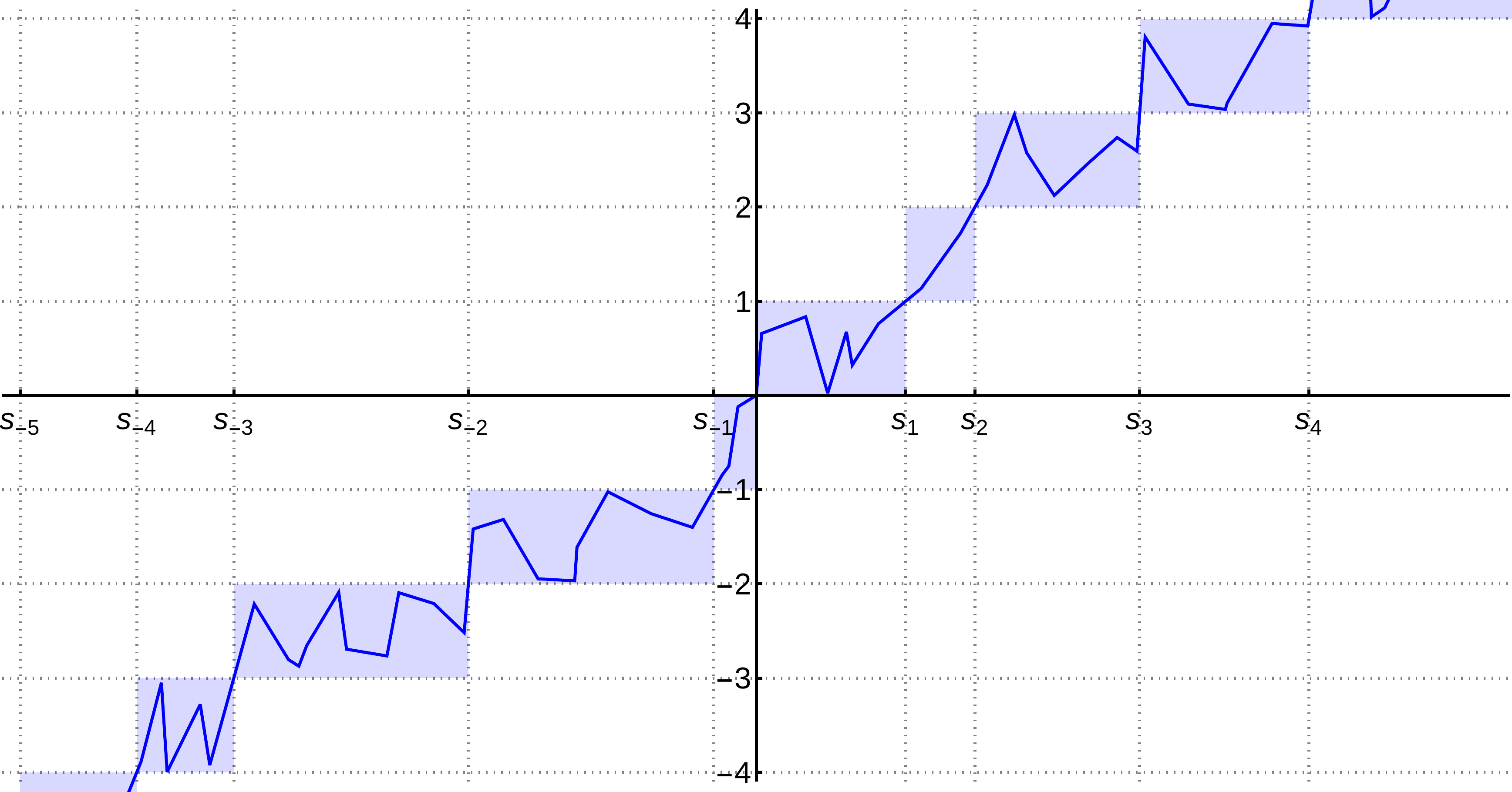}
\caption{Illustration for the level crossing condition~\ref{thm:bell:a} in Theorem~\ref{thm:bell}. The graph of the function $\ph$ is to be contained in the union of rectangles $[s_k, s_{k+1}] \times [k, k+1]$, $k \in \Z$.}
\label{fig:phi}
\end{figure}

\begin{theorem}[{Theorem~1.1 in~\cite{bell}}]
\label{thm:bell}
Suppose that $a \ge 0$, $b \in \R$, $c \in \R$, and that $\ph : \R \to \R$ is a Borel function with the following properties:
\begin{enumerate}[label=\textnormal{(\alph*)}]
\item\label{thm:bell:a} the level-crossing condition: for every $k \in \Z$ the function $\ph(s) - k$ changes its sign at most once, and for $k = 0$ this change takes place at $s = 0$: we have $\ph(s) \ge 0$ for $s > 0$ and $\ph(s) \le 0$ for $s < 0$ (see Figure~\ref{fig:phi});
\item\label{thm:bell:b} integrability condition: we have
\formula{
 \biggl( \int_{-\infty}^{-1} + \int_1^\infty \biggr) \frac{|\ph(s)|}{|s|^3} \, ds < \infty .
}
\end{enumerate}
For $\xi \in \R \setminus \{0\}$ define
\formula[eq:bell]{
 \Phi(\xi) & = \exp\biggl(-a \xi^2 - i b \xi + c + \int_{-\infty}^\infty \biggl( \frac{1}{i \xi + s} - \biggl(\frac{1}{s} - \frac{i \xi}{s^2} \biggr) \ind_{\R \setminus (-1, 1)}(s) \biggr) \ph(s) ds \biggr) ,
}
and assume in addition the following property:
\begin{enumerate}[resume*]
\item\label{thm:bell:c} regularity condition: we have
\formula{
 \int_{-1}^1 \re \Phi(\xi) d\xi & < \infty &\qquad& \text{and} &\qquad \lim_{\xi \to 0} (\xi \im \Phi(\xi)) & = 0 .
}
\end{enumerate}
Then there is a weakly bell-shaped function $f$ such that $\Phi$ is the Fourier transform of $f$:
\formula{
 \laplace f(i \xi) & := \int_{-\infty}^\infty e^{-i \xi x} f(x) dx = \Phi(\xi)
}
for every $\xi \in \R \setminus \{0\}$ (with the integral understood as an improper integral if $f$ is not integrable).
\end{theorem}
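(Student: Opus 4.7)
The plan is to realise $\Phi$ as the Fourier transform of a convolution $f = f_1 * f_2$, where $f_1$ is a Pólya frequency function encoded by the integer-valued ``skeleton'' of $\varphi$ and $f_2$ is an $\amcm$ function encoded by the fractional remainder. This mirrors the factorisation announced in the abstract and reduces the bell-shape verification to two well-understood building blocks.

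First I would verify that \eqref{eq:bell} defines $\Phi$ as a continuous function on $\R \setminus \{0\}$. The subtracted terms $\frac{1}{s} - \frac{i\xi}{s^2}$ are the first two terms of the Taylor expansion of $\frac{1}{i\xi + s}$ around $\xi = 0$ in powers of $i\xi/s$, which makes the integrand of order $|\varphi(s)|/|s|^3$ at infinity (integrable by \ref{thm:bell:b}) and bounded near $s = 0$ for $\xi$ away from zero. Condition \ref{thm:bell:c} then allows the candidate $f$ to be recovered by Fourier inversion as a locally integrable function.

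The heart of the construction is to identify $f$ explicitly. Using the identities $\frac{1}{i\xi + s} = \int_0^\infty e^{-i\xi x} e^{-sx} dx$ for $s > 0$ and $\frac{1}{i\xi + s} = -\int_{-\infty}^0 e^{-i\xi x} e^{-sx} dx$ for $s < 0$, the exponent in \eqref{eq:bell} can be rewritten (after absorbing the correction terms into $b$ and $c$) as the sum of a Gaussian part, a linear part, and the Fourier transform of a function $\psi$ with $\psi(x) = \int_0^\infty \varphi(s) e^{-sx} ds$ for $x > 0$ and $\psi(x) = -\int_{-\infty}^0 \varphi(s) e^{-sx} ds$ for $x < 0$. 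The level-crossing condition \ref{thm:bell:a} together with Bernstein's theorem ensures that $\psi$ belongs to the $\amcm$ class. Decomposing $\varphi = \varphi_1 + \varphi_2$, with $\varphi_1$ integer-valued (a step function with unit jumps at some sequence $(s_k)$) and $\varphi_2$ valued in $(-1, 1)$, produces a factorisation $\Phi = \Phi_1 \Phi_2$. The factor $\Phi_1$ is a Weierstrass-like product of terms $e^{i\xi/s_k}/(1 + i\xi/s_k)$ (or their reflections for $s_k < 0$) combined with the Gaussian factor $e^{-a\xi^2}$, which by Schoenberg's theorem is the Fourier transform of a Pólya frequency function $f_1$; the remaining factor $\Phi_2$ is the Fourier transform of an $\amcm$ function $f_2$, so that $f = f_1 * f_2$.

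The main obstacle will be the sign-change count: to verify that $f * G_t$ is strictly bell-shaped for every $t > 0$, one needs to show that $(f * G_t)^{(n)}$ has exactly $n$ sign changes for every $n$. My strategy is to extend $\xi \mapsto \Phi(\xi) e^{-t\xi^2}$ holomorphically to a horizontal strip around the real axis, apply a Port-type inversion formula to translate sign changes of $(f * G_t)^{(n)}$ on $\R$ into zeros of this extension, and exploit the Laguerre--Pólya structure inherited from the Pólya frequency factor together with a perturbation estimate to handle the $\amcm$ factor. The most delicate point is passing from step-function approximations of $\varphi$ to the general case while controlling the zero count uniformly, since zeros of the holomorphic extensions can coalesce or drift to infinity in the limit.
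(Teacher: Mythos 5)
Your factorisation $\Phi = \Phi_1 \Phi_2$ into a Pólya frequency piece (the integer-valued skeleton $\ph_1$ of $\ph$ together with the Gaussian factor) and an $\amcm$ piece (the remainder $\ph_2$ with $\ph_2(s) \sign s \in [0,1]$) is exactly the decomposition used in Lemma~5.4 of \cite{bell}, so that part of your plan matches the source. The gap is in your proposed verification of the sign-change count, which is the real content of the theorem.

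The proof in \cite{bell} (sketched after Proposition~\ref{prop:pff} above) does \emph{not} count zeros of a holomorphic extension of $\Phi(\xi) e^{-t\xi^2}$. It proceeds in two steps that entirely sidestep the difficulty you flag at the end of your proposal. First, one shows directly that a locally integrable $\amcm$ extended function $g$ converging to zero at $\pm\infty$ is weakly bell-shaped; this is a self-contained argument exploiting the completely/absolutely monotone structure of $g$ and is a substantial part of \cite{bell}. Second — and this is the structural input you are missing — one invokes the \emph{variation-diminishing} property of Pólya frequency kernels, item~\ref{it:pff:kernel} of Proposition~\ref{prop:pff}: convolution with $h$ never increases the number of sign changes of a bounded Borel function. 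Since $(g * G_t)^{(n)}$ changes sign exactly $n$ times and $(f * G_t)^{(n)} = (g * G_t)^{(n)} * h$, the upper bound of $n$ on the sign changes of $(f * G_t)^{(n)}$ is immediate, and the matching lower bound follows by iterating the Rolle-type observation that a smooth function tending to zero at $\pm\infty$ with $k$ sign changes has a derivative with at least $k+1$.

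The zero-counting/perturbation argument you propose would indeed run into the coalescence and escape-to-infinity problems you identify, because the holomorphic extension coming from the $\amcm$ factor is not of Laguerre--Pólya type and its zeros need not behave under limits. The variation-diminishing property makes all of that irrelevant: the Pólya frequency factor cannot \emph{create} sign changes, so one never has to analyse how its zeros interact with those of the $\amcm$ factor. Note also that the Post/Port inversion machinery you invoke is the engine of the \emph{converse} result, Theorem~\ref{thm:bell2}, proved in Sections~\ref{sec:bs1}--\ref{sec:bs2} of this paper; it plays no role in the proof of Theorem~\ref{thm:bell} itself.
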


\begin{remarks}
\begin{enumerate}[label=\textnormal{(\alph*)}]
\item If $f_1$ and $f_2$ are bell-shaped functions corresponding to parameters $a_1, b_1, c_1, \ph_1$ and $a_2, b_2, c_2, \ph_2$ in Theorem~\ref{thm:bell}, and the convolution $f_1 * f_2$ is well-defined, then the Fourier transform of $f_1 * f_2$ has the representation~\eqref{eq:bell}, with $a = a_1 + a_2$, $b = b_1 + b_2$, $c = c_1 + c_2$ and $\ph = \ph_1 + \ph_2$. Note that while the integrability condition~\ref{thm:bell:b} is automatically satisfied, $\ph$ may fail to satisfy the level-crossing condition~\ref{thm:bell:a} and the regularity condition~\ref{thm:bell:c}.
\item Given a function $f$, it is relatively easy to verify whether $f$ is of the form given by Theorem~\ref{thm:bell}. Indeed, if $f$ is as in Theorem~\ref{thm:bell}, the parameters $a$, $b$, $c$ and $\ph$ can be recovered from the Fourier transform $\Phi(\xi) = \laplace f(i \xi)$ of $f$ in the following way. The function $\Phi$ extends to a zero-free holomorphic function in the right complex half-plane $\{\xi \in \C : \re \xi > 0\}$, and $\ph(s) ds$ is the vague limit of $\pi^{-1} \im \log \Phi(t + i s) ds$ as $t \to 0^+$, where $\log \Phi$ is a continuous version of the complex logarithm of $\Phi$; see Remark~5.5 in~\cite{bell}. Having determined $\ph$, it is straightforward to identify the parameters $a$, $b$ and $c$.
\end{enumerate}
\end{remarks}

As explained in Section~6 of~\cite{bell}, the class of functions described by the above result may seem artificial. More precisely, the level-crossing condition~\ref{thm:bell:a} appears rather unnatural. However, to a great surprise of the authors, it turns out that Theorem~\ref{thm:bell} describes \emph{all} bell-shaped functions. This is the main result of the present article.

\begin{theorem}
\label{thm:bell2}
Every weakly bell-shaped function is of the form described in Theorem~\ref{thm:bell}.
\end{theorem}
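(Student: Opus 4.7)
My strategy is to work directly with the Fourier transform $\Phi(\xi) = \laplace f(i\xi)$, extract the parameters $a, b, c$ and the function $\ph$ from the boundary behaviour of $\Phi$ in the right complex half-plane, and then verify the three conditions of Theorem~\ref{thm:bell}; once $\Phi$ is cast in the form \eqref{eq:bell} with admissible parameters, Theorem~\ref{thm:bell} itself supplies the conclusion.

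I would begin by smoothing $f$: since $f$ is weakly bell-shaped, each $f_t := f * G_t$ is strictly bell-shaped, smooth and rapidly decaying, with Fourier transform $\Phi_t(\xi) = \Phi(\xi) e^{-t \xi^2}$. From the fact that $f_t^{(n)}$ has exactly $n$ sign changes for every $n$, I expect to argue via classical variation-diminishing considerations in the spirit of Schoenberg and Hirschman~\cite{hirschman} that $\Phi_t$ is zero-free in the open right half-plane $\{\re \xi > 0\}$. Since the Gaussian factor is never zero, the same property holds for $\Phi$, and passing to the limit $t \to 0^+$ yields a zero-free holomorphic extension of $\Phi$ to $\{\re \xi > 0\}$.

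With a zero-free holomorphic $\Phi$, a single-valued branch of $\log \Phi$ can be fixed on $\{\re \xi > 0\}$. I would then control its growth at infinity and near the imaginary axis (using integrability of $f$ and decay of $\Phi$) and invoke a Nevanlinna/Herglotz-type representation to express $\log \Phi$ in the exact form of~\eqref{eq:bell}. The parameters $a \ge 0$, $b \in \R$, $c \in \R$ arise from the polynomial part of the asymptotic expansion of $\log \Phi$ at $\infty$ and at the origin, while $\ph(s) ds$ emerges as the vague boundary limit $\pi^{-1} \im \log \Phi(t + is) ds$ as $t \to 0^+$, in accordance with the recovery formula in the Remark following Theorem~\ref{thm:bell}. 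The integrability condition~\ref{thm:bell:b} would follow from standard growth bounds on $\log \Phi$, and the regularity condition~\ref{thm:bell:c} from the integrability of $f$ together with continuity of $\Phi$ at the origin.

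The main obstacle is the level-crossing condition~\ref{thm:bell:a}, which is a very rigid geometric constraint: $\ph$ must be confined to a staircase of unit-height rectangles and can cross each integer level $k$ at most once. The sign prescription ($\ph \ge 0$ on $(0,\infty)$ and $\ph \le 0$ on $(-\infty,0)$) should follow from $f \ge 0$ and decay at infinity via a Phragmén--Lindelöf-type argument applied to $\Phi$ on narrow sectors. The single-crossing property seems considerably harder; I would argue by contrapositive, showing that if $\ph$ crossed some level $k$ more than once, then Port's inversion formula together with a careful asymptotic analysis of the contribution of this extra oscillation of $\ph$ to $\laplace^{-1}((i\xi)^n \Phi(\xi))$ would produce a derivative $f^{(n)}$ with strictly more than $n$ sign changes, contradicting bell-shape. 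Translating local oscillations of $\ph$ quantitatively into extra sign changes of $f^{(n)}$ is where I expect the bulk of the technical work to lie, as it requires a precise connection between the boundary geometry of $\log \Phi$ along the imaginary axis and the global sign-change structure of the successive derivatives of $f$.
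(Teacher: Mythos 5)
Your high-level framework is sensible—pass to Gaussian mollifications, work with $\log\Phi$ on the right half-plane, extract $\ph$ from boundary values—but the two hardest steps are left as acknowledged gaps, and those gaps are precisely what the paper's argument is designed to circumvent rather than attack head-on.

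First, the zero-freeness of $\Phi_t$ in $\{\re\xi>0\}$. You appeal to "classical variation-diminishing considerations", but these characterise \emph{Pólya frequency functions}, a strictly smaller class than bell-shaped functions; $f$ itself is not a VD kernel, only one factor in the eventual decomposition $f=g*h$ is. There is no off-the-shelf variation-diminishing result that hands you zero-freeness for a general bell-shaped $f$, and the paper never proves it directly—it emerges only a posteriori from the representation. Second, and more importantly, the level-crossing condition~\ref{thm:bell:a}. You propose to prove it by contrapositive, arguing that an extra oscillation of $\ph$ forces extra sign changes of $f^{(n)}$ via inversion of $(i\xi)^n\Phi(\xi)$; you yourself flag this as "where the bulk of the technical work lies". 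But that contrapositive is essentially the full strength of the theorem, and no mechanism is offered to carry it out.

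The paper avoids both difficulties with one device. Writing $n\alpha_{n,1}<\cdots<n\alpha_{n,n}$ for the zeroes of $f^{(n)}$, it forms $g_n(x) = \frac{(-1)^n n^{n+1}}{n!}\, f^{(n)}(nx)\prod_{k=1}^n(x-\alpha_{n,k})$, which is \emph{non-negative} by construction of the sign pattern. A partial-fraction identity gives
\formula{
 \frac{n^{n+1}}{n!\,z^n}\int_{-\infty}^\infty \frac{f^{(n)}(nx)}{1+xz}\,dx
 = \biggl(\prod_{k=1}^n \frac{1}{1+\alpha_{n,k} z}\biggr)\int_{-\infty}^\infty \frac{g_n(x)}{1+xz}\,dx,
}
and Post's inversion formula (the same tool you invoke, but used constructively) shows the left-hand side with $z=i\xi$ tends to $\laplace f(i\xi)$. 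The first factor on the right is the Fourier transform of a Pólya frequency function; the second, because $g_n\ge 0$, is the Fourier transform of an $\amcm$ function. So every approximant is already of the form~\eqref{eq:bell} with admissible parameters—the level-crossing condition holds for it trivially. A closure lemma (Lemma~\ref{lem:limit2}) then shows this class is closed under pointwise limits of Fourier transforms, so the limit $\laplace f(i\xi)$ is also of the form~\eqref{eq:bell}. In short: the missing idea in your proposal is not a refinement of the Nevanlinna/contrapositive route, but the algebraic factorisation of the Post approximants into a Pólya-frequency part and a manifestly $\amcm$ part, which renders the level-crossing condition automatic rather than something to be verified.
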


The above statement can be thought of as the bell-shape analogue of the classical Bernstein's theorem, which identifies completely monotone functions with Laplace transforms of non-negative measures.

The following two direct corollaries of Theorem~\ref{thm:bell2} are of probabilistic nature.

\begin{corollary}
\label{cor:roots}
Every weakly (resp., strictly) bell-shaped probability distribution function is infinitely divisible, and its convolution roots are weakly (resp., strictly) bell-shaped, too.
\end{corollary}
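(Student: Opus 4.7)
The natural strategy will be to construct convolution roots by taking the $n$-th root of the Fourier transform representation~\eqref{eq:bell}. Let $f$ be a weakly bell-shaped probability density. By Theorem~\ref{thm:bell2}, its Fourier transform $\Phi$ has the form~\eqref{eq:bell} for some parameters $(a, b, c, \ph)$ satisfying conditions~\ref{thm:bell:a}--\ref{thm:bell:c}. Since $f$ is a probability density, $\Phi$ is continuous on $\R$ with $\Phi(0) = 1$, a fact that will be used repeatedly.

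For each $n \ge 1$, I would introduce the scaled parameters $(a/n, b/n, c/n, \ph/n)$ and verify that they again satisfy~\ref{thm:bell:a}--\ref{thm:bell:c}. For~\ref{thm:bell:a}, $\ph/n$ has the same sign as $\ph$, and for every $k \in \Z$ the function $\ph/n - k$ changes sign at the same points as $\ph - nk$, hence at most once by the hypothesis applied at the integer level $nk$. Condition~\ref{thm:bell:b} is preserved trivially. For~\ref{thm:bell:c}, note that $\Phi^{1/n} := \exp((\log \Phi)/n)$, defined using the continuous complex logarithm (as in the remark following Theorem~\ref{thm:bell}), is continuous at $0$ with value $1$, so $\re \Phi^{1/n}$ is bounded on $(-1,1)$ and $\xi \im \Phi^{1/n}(\xi) \to 0$ as $\xi \to 0$. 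Theorem~\ref{thm:bell} would then produce a weakly bell-shaped function $f_n$ with Fourier transform $\Phi^{1/n}$, and $\Phi^{1/n}(0) = 1$ makes $f_n$ a probability density. Since $(\Phi^{1/n})^n = \Phi = \laplace f(i\,\cdot)$, Fourier uniqueness forces $f_n^{*n} = f$, establishing infinite divisibility of $f$ with weakly bell-shaped convolution roots.

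For the strictly bell-shaped case, by Corollary~4.4 in~\cite{bell} it suffices to prove that the root $f_n$ just constructed is smooth whenever $f$ is. The hard part will be this smoothness, since smoothness of a function does not in general propagate to its convolution roots. The plan is to extract a quantitative decay rate of $\Phi$ from the sign-change structure of $f$: iteratively integrating by parts and using that each derivative $f^{(k)}$ has exactly $k$ sign changes (hence is monotone in each tail, with vanishing limits by a short induction based on $f\to 0$ at $\pm\infty$) should yield that $\Phi$ decays faster than any polynomial. This decay is inherited by $\Phi^{1/n} = \Phi^{1/n}$, which therefore lies in $L^1$ together with all its polynomial multiples, forcing $f_n$ to be of class $C^\infty$ and hence strictly bell-shaped.
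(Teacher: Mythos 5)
Your proof follows the paper's argument exactly: scale the parameters in~\eqref{eq:bell} to $(a/n, b/n, c/n, \ph/n)$, verify conditions~\ref{thm:bell:a}--\ref{thm:bell:c}, and conclude by Theorem~\ref{thm:bell} and Fourier uniqueness. Your additional rapid-decay argument for the strict case (that $|\laplace f(i\xi)|$ decays faster than any polynomial because each $f^{(k)}$, $k\ge 1$, is integrable, and that this decay passes to $|\laplace f(i\xi)|^{1/n}$, forcing $f_n$ to be smooth and hence strictly bell-shaped via Corollary 4.4 of~\cite{bell}) is correct and supplies a justification that the paper's terse proof leaves implicit.
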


\begin{corollary}
\label{cor:walk}
Suppose that $X_n$ is a random walk in $\R$ (that is, $X_0 = 0$ and $X_{n+1} - X_n$ is an i.i.d. sequence of random variables). The following two conditions are equivalent:
\begin{enumerate}[label=\textnormal{(\alph*)}]
\item\label{cor:walk:a} the distribution of $X_n$ is weakly bell-shaped for every $n = 1, 2, \ldots$;
\item\label{cor:walk:b} the distribution of $X_1$ is an extended generalised gamma convolution, that is, it has a representation as in Theorem~\ref{thm:bell}, with $\ph$ a non-decreasing function.
\end{enumerate}
\end{corollary}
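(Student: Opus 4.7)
The plan is to reduce both implications to Theorems~\ref{thm:bell} and~\ref{thm:bell2}, exploiting the fact that the exponent in the representation~\eqref{eq:bell} is linear in the quadruple $(a, b, c, \ph)$, so that taking the $n$-fold convolution corresponds to multiplying all four parameters by $n$.

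For \ref{cor:walk:b}~$\Rightarrow$~\ref{cor:walk:a}, assume $X_1$ has Fourier transform of the form~\eqref{eq:bell} with parameters $a_1, b_1, c_1$ and a non-decreasing $\ph_1$. The characteristic function of $X_n$ equals $\Phi_1^n$, which is of the form~\eqref{eq:bell} with parameters $(n a_1, n b_1, n c_1, n \ph_1)$. Since $n\ph_1$ is still non-decreasing with the correct sign at the origin, the level-crossing condition~\ref{thm:bell:a} is automatic, and the integrability condition~\ref{thm:bell:b} is preserved under scaling by $n$. The regularity condition~\ref{thm:bell:c} follows from $|\Phi_1^n(\xi)| \le 1$ and the continuity of $\Phi_1$ at the origin, both being consequences of $X_1$ being a probability distribution. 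Theorem~\ref{thm:bell} then produces that each $X_n$ is weakly bell-shaped.

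For the converse, Theorem~\ref{thm:bell2} applied to each $X_n$ produces parameters $(a_n, b_n, c_n, \ph_n)$. The uniqueness of the representation~\eqref{eq:bell} (up to modification of $\ph$ on a Lebesgue null set; see the second remark after Theorem~\ref{thm:bell}) combined with the identity $\Phi_n = \Phi_1^n$ forces $a_n = n a_1$, $b_n = n b_1$, $c_n = n c_1$, and $\ph_n = n \ph_1$ almost everywhere. After modifying $\ph_1$ on a single null set, one arranges $\ph_n = n \ph_1$ pointwise for all $n \in \N$ simultaneously, so that the level-crossing condition for each $\ph_n$ reads: for every $n \in \N$ and every $k \in \Z$, the function $\ph_1 - k/n$ changes sign at most once.

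The last step is to upgrade this density of admissible levels to monotonicity of $\ph_1$. If $\ph_1$ were not non-decreasing, there would exist $s_1 < s_2 < s_3$ such that $\ph_1(s_2)$ lies strictly outside the closed interval spanned by $\ph_1(s_1)$ and $\ph_1(s_3)$ on the wrong side; picking a rational $k/n$ strictly between $\ph_1(s_2)$ and $\max\tset{\ph_1(s_1), \ph_1(s_3)}$ (or the corresponding minimum) would yield two sign changes of $\ph_1 - k/n$ in $[s_1, s_3]$, contradicting the level-crossing property of $\ph_n$. The main technical subtlety throughout is reconciling the pointwise nature of condition~\ref{thm:bell:a} with the fact that $\ph$ is only determined up to null sets; once a single consistent Borel representative of $\ph_1$ has been chosen, the argument above goes through without further complication.
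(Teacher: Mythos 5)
Your proposal is correct and follows essentially the same route as the paper: express the characteristic function of $X_n$ via~\eqref{eq:bell} with parameters $(na_1, nb_1, nc_1, n\ph_1)$, invoke Theorem~\ref{thm:bell2} for the converse, and observe that the level-crossing condition for $n\ph_1$ over all $n \in \N$ is equivalent (after a.e. modification) to $\ph_1$ being non-decreasing. You spell out the null-set bookkeeping and the rational-level argument in more detail than the paper does, but the underlying idea is identical.
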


The next result follows from Theorem~\ref{thm:bell2} in a less straightforward way. It provides information about the distribution of the zeroes of the derivatives of bell-shaped functions. For simplicity, throughout this article by a zero of the derivative $f^{(n)}$ of a bell-shaped function $f$ we always understand a point at which $f^{(n)}$ changes its sign.

\begin{corollary}
\label{cor:zeroes}
If $f$ is a bell-shaped (or a weakly bell-shaped) function, then there are constants $p, q$ such that for every $n = 1, 2, \ldots$ all zeroes of~$f^{(n)}$ are contained in $[p n, q n]$.
\end{corollary}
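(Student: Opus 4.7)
The plan is to combine Theorem~\ref{thm:bell2} with the convolution decomposition announced as the main result of the paper: every weakly bell-shaped $f$ factors as $f = f_1 * f_2$, with $f_1$ a Pólya frequency function and $f_2$ absolutely monotone-then-completely monotone. In terms of the representation~\eqref{eq:bell}, $f_1$ corresponds to splitting off the bounded part of $\ph$ and $f_2$ to the unbounded part. The starting point is the identity $f^{(n)} = f_1^{(n)} * f_2$, which reduces the problem to first locating the sign changes of $f_1^{(n)}$ and then quantifying how much convolution with $f_2$ can shift them.

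For the PF factor one uses the classical Schoenberg representation of $f_1$ as a (possibly infinite) convolution of exponential densities with parameters $(a_k)$, a Gaussian factor, and a deterministic translation. Standard total-positivity arguments then show that $f_1^{(n)}$ has exactly $n$ sign changes, all located inside an interval of the form $[\alpha n - K, \beta n + K]$, with $\alpha, \beta, K$ depending only on $f_1$; more precisely, $\alpha$ and $\beta$ are determined by the tail behaviour of $\ph$ at $\pm\infty$. The AMCM factor $f_2$ is itself very well behaved: its derivatives have at most one sign change (at $0$) and decay at least exponentially at both infinities. Consequently, one expects that convolving $f_1^{(n)}$ with $f_2$ preserves the total number of sign changes (by the Pólya--Schoenberg variation-diminishing property) and shifts each of them by a uniformly bounded amount depending only on $f_2$. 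Choosing $p < \alpha$ and $q > \beta$ then yields $[pn, qn]$ for all $n$ large, and the finitely many remaining indices are absorbed by enlarging $p$ and $q$ further.

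The main obstacle is precisely this quantitative bounded-shift estimate for the convolution $f_1^{(n)} * f_2$: the classical variation-diminishing property controls only the \emph{number} of sign changes of a convolution, not their location, so a finer argument is needed. One natural route is via Port's inversion formula, cited in the keywords, which expresses the zeroes of $f^{(n)}$ as solutions of an integral equation governed explicitly by $\ph$ and the parameters $a, b$; the constants $p, q$ can then be read off directly from the asymptotic slopes of $\ph$ at $\pm\infty$. An alternative is a saddle-point analysis of the oscillatory integral $f^{(n)}(x) = (2\pi)^{-1} \int (i\xi)^n \Phi(\xi) e^{i\xi x} d\xi$, combined with a careful control of the phase of $\Phi$ on a suitably deformed contour, extracting $p$ and $q$ directly from the growth of $\ph$ at infinity. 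Either approach makes the intuitive picture rigorous: the linear rate of the zeroes of $f^{(n)}$ is an intrinsic invariant of $f$, controlled by the asymptotic geometry of the level-crossing function $\ph$.
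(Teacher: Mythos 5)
Your proposal identifies the correct decomposition $f = f_1 * f_2$ (Pólya frequency times $\amcm$, available from Corollary~\ref{cor:bell}) and the correct identity $f^{(n)} = f_1^{(n)} * f_2$, but it does not close the argument: the decisive step --- that convolution with $f_2$ moves each sign change of $f_1^{(n)}$ by a uniformly bounded amount --- is exactly what needs to be proved, and you acknowledge this yourself without resolving it. Two of the auxiliary claims you lean on to make this plausible are also incorrect. First, the variation-diminishing property belongs to the Pólya frequency factor $f_1$ (item~\ref{it:pff:kernel} of Proposition~\ref{prop:pff}), not to $f_2$; an $\amcm$ function is not in general a variation-diminishing kernel, so invoking ``the Pólya--Schoenberg variation-diminishing property'' for the convolution with $f_2$ is not justified. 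Second, $\amcm$ functions need not decay exponentially: $g(x) = (1+|x|)^{-2}$ is $\amcm$, integrable, and has only polynomial tails, so the ``bounded-shift'' heuristic built on exponential decay of $f_2$ has no a priori support. Finally, the two sketched rescue routes (an integral equation for the zeroes coming from Post's formula, a saddle-point analysis of the Fourier integral) are ideas rather than arguments; in particular, the phase of $\Phi$ along a deformed contour is governed by the whole of $\ph$, not just its slopes at $\pm\infty$, so $p$ and $q$ cannot simply be ``read off'' from the asymptotics of $\ph$.

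The paper takes a different route that avoids the localization-by-factor issue entirely. Corollary~\ref{cor:zeroes} is deduced as an immediate consequence of Proposition~\ref{prop:zeroes}, which asserts relative compactness (in the weak topology) of the rescaled zero-counting measures $\sum_{k=1}^n \alpha_{n,k}^2 \delta_{\alpha_{n,k}}$, where $n\alpha_{n,k}$ are the zeroes of $f^{(n)}$; uniform boundedness of the $\alpha_{n,k}$, hence the confinement of the zeroes to $[pn,qn]$, is a byproduct. That compactness is not obtained from the convolution factors but directly from the machinery of the proof of Theorem~\ref{thm:bell2}: the non-negative auxiliary function
\formula{
 g_n(x) &= \frac{(-1)^n n^{n+1}}{n!}\, f^{(n)}(nx) \prod_{k=1}^n (x - \alpha_{n,k}) ,
}
Post's inversion formula (Lemma~\ref{lem:F:inv2}), the comparison $\ph_{n,\mathrm{h}} \le \ph_n \le \ph_{n,\mathrm{h}} + 1$ from Remark~\ref{rem:aux}, and the a priori measure bound~\eqref{eq:limit:est2} of Lemma~\ref{lem:limit2}, which is extracted from pointwise convergence of the Fourier transforms. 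The quantitative control you were missing is supplied by~\eqref{eq:limit:est2}, not by any bounded-shift estimate for the factors.
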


Therefore, the zeroes of the derivative $f^{(n)}$ of a bell-shaped function $f$ move away from the origin at most linearly with $n$ as $n \to \infty$. A more refined result states that with the notation of Theorem~\ref{thm:bell}, the rates at which zeroes of the derivatives of $f$ diverge, in fact describe the parameter $a$ and the points $s_k$ at which $\ph - k$ changes its sign for $k \in \Z$ (see Figure~\ref{fig:phi}). A rigorous formulation of this result, however, requires additional definitions and a brief discussion of the proof of Theorem~\ref{thm:bell}. These are the main subject of the following two subsections.

\subsection{Pólya frequency functions, $\amcm$ functions and factorisation of bell-shaped functions}
\label{sec:intro:decomp}

Since a weakly bell-shaped function $f$ is a vague limit of strictly bell-shaped functions, every weakly bell-shaped function is absolutely continuous with respect to the Lebesgue measure, except possibly a single point, where it may contain an atom of non-negative mass. Therefore, weakly bell-shaped functions are essentially genuine functions, possibly with an additional atom. To keep the notation more intuitive, we follow the convention introduced in~\cite{bell}, and we call such measures \emph{extended functions}. Thus, an extended function $f$ is a measure which is absolutely continuous with respect to the Lebesgue measure, except possibly at a single point. We denote the density function of $f$ by the same symbol $f(x)$, and if $f$ has an atom at $p$, its mass is denoted by $f(\{p\})$.

Also following~\cite{bell}, we say that an extended function $g$ is \emph{absolutely monotone-then-completely monotone}, or $\amcm$ in short, if $g(x)$ and $g(-x)$ are completely monotone functions of $x > 0$, and the atom of $g$, if present, is located at $0$ and has non-negative mass. Bernstein's theorem asserts that $g$ is an $\amcm$ function if and only if there are non-negative measures $\mu_+$ and $\mu_-$ on $[0, \infty)$ such that $g(x) = \laplace \mu_+(x)$ for $x > 0$ and $g(x) = \laplace \mu_-(-x)$ for $x < 0$; an extended $\amcm$ function may contain an additional atom at $0$. We say that $\mu_+$ and $\mu_-$ are Bernstein measures of $g$.

We have the following characterisation of $\amcm$ functions.

\begin{figure}
\centering
\includegraphics[width=0.8\textwidth]{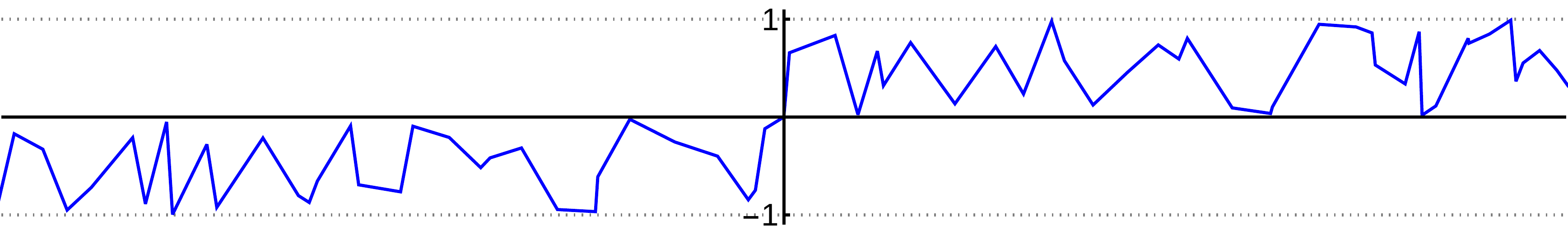}
\caption{A sample function $\ph$ in Proposition~\ref{prop:amcm}.}
\label{fig:amcm-phi}
\end{figure}

\begin{proposition}[Corollary~3.3 and Proposition~5.1 in~\cite{bell}]
\label{prop:amcm}
If $g$ is a locally integrable extended function, possibly with an atom at $0$ of non-negative mass, $g$ converges to zero at $\pm\infty$ and $g$ is non-decreasing near $-\infty$ and non-increasing near $\infty$, then the following conditions are equivalent:
\begin{enumerate}[label=\textnormal{(\alph*)}]
\item\label{it:amcm} $g$ is an $\amcm$ function;
\item\label{it:amcm:int} the Fourier transform of $g$ is of the form
\formula[eq:amcm:int]{
 \laplace g(i \xi) & = m + \int_{(0, \infty)} \frac{1}{i \xi + s} \, \mu_+(ds) - \int_{(0, \infty)} \frac{1}{i \xi - s} \, \mu_-(ds)
}
for $\xi \in \R \setminus \{0\}$, where $m \ge 0$ and $\mu_+$ and $\mu_-$ are non-negative measures on $(0, \infty)$ such that the above integrals are finite; conversely, any such $m$, $\mu_+$ and $\mu_-$ correspond to some $\amcm$ function $g$; here $m = g(\{0\})$ and $\mu_+$, $\mu_-$ are the Bernstein measures of~$g$;
\item\label{it:amcm:exp} the Fourier transform of $g$ is of the form
\formula[eq:amcm:exp]{
 \laplace g(i \xi) & = \exp\biggl(c + \int_{-\infty}^\infty \biggl( \frac{1}{i \xi + s} - \frac{1}{s} \, \ind_{\R \setminus (-1, 1)}(s) \biggr) \ph(s) ds \biggr)
}
for $\xi \in \R \setminus \{0\}$, where $c \in \R$, $\ph : \R \to \R$ is a Borel function such that $\ph(s) \sign s \in [0, 1]$ for almost all $s \in \R$, and condition~\ref{thm:bell:c} of Theorem~\ref{thm:bell} is satisfied (see Figure~\ref{fig:amcm-phi}); conversely, any such $c$ and $\ph$ correspond to some $\amcm$ function.
\end{enumerate}
\end{proposition}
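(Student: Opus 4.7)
\emph{(a) $\Leftrightarrow$ (b).} This is a direct Fourier computation based on Bernstein's theorem. Given $g \in \amcm$ with Bernstein measures $\mu_\pm$ on $(0, \infty)$ and atomic mass $m = g(\{0\})$, I would split the Fourier integral at the origin, substitute $g(x) = \int_{(0,\infty)} e^{-sx} \mu_+(ds)$ for $x > 0$ and its reflected analogue for $x < 0$, and apply Fubini to evaluate each one-sided transform as $\int_0^\infty e^{-(i\xi + s)x} dx = (i\xi + s)^{-1}$. The summability needed for Fubini follows from local integrability of $g$ together with its decay at infinity, which forces $\int \mu_\pm(ds)/(1 + s) < \infty$. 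The converse (b) $\Rightarrow$ (a) is Fourier inversion applied term by term, recognising $(i\xi + s)^{-1}$ and $-(i\xi - s)^{-1}$ as the Fourier transforms of $e^{-sx} \ind_{(0, \infty)}(x)$ and $e^{sx} \ind_{(-\infty, 0)}(x)$, respectively.

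\emph{(b) $\Leftrightarrow$ (c).} The formula in (b) extends $\Phi$ to a function holomorphic on the right half-plane $H = \{\re \xi > 0\}$. The first task is to prove $\Phi$ has no zeros on $H$. My plan is to verify that $\re \Phi > 0$ on the positive real axis (each Cauchy kernel has positive real part there since $\mu_\pm \ge 0$) and exclude interior zeros via the argument principle on a large semicircular contour in $H$, using that $\Phi \to m$ at infinity together with the support separation of $\mu_+$ and $\mu_-$ to control the winding. Once nonvanishing is secured, a holomorphic branch of $\log \Phi$ is well-defined on $H$, normalised at $+\infty$. Applying Sokhotski--Plemelj to the representation (b) shows that as $t \to 0^+$, $\im \Phi(t + is)$ converges to $-\pi u_+(s)$ for $s > 0$ and to $+\pi u_-(-s)$ for $s < 0$, where $u_\pm$ are the densities of $\mu_\pm$. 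In particular $\im \Phi(is^+) \le 0$ on the upper imaginary boundary and $\ge 0$ on the lower, so the continuous argument of $\Phi$ remains in $[-\pi, 0]$ on the upper half-axis and in $[0, \pi]$ on the lower, yielding $\ph(s) := -\pi^{-1} \im \log \Phi(is^+)$ satisfying $\ph(s) \sign s \in [0, 1]$. The integral in (c) is then the Nevanlinna--Poisson reconstruction of $\log \Phi$ from its boundary imaginary part $-\pi \ph$; the regularity condition~\ref{thm:bell:c} is precisely what ensures uniqueness of the reconstruction and convergence of the integral. The reverse (c) $\Rightarrow$ (b) is then obtained by exponentiation, with $\mu_\pm$ recovered as the Sokhotski jump of $\Phi$ across the imaginary axis and $m = \lim_{\xi \to +\infty} \Phi(\xi) \ge 0$; the bound $|\ph \sign s| \le 1$ is exactly what forces $\im \Phi(is^+)$ to have the correct sign to produce nonnegative measures.

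\emph{Main obstacle.} Both the nonvanishing of $\Phi$ on $H$ and the tight range $\ph(s) \sign s \in [0, 1]$ hinge delicately on the sign-separation of the signed measure underlying $\Phi$: without this separation the Cauchy transform could vanish inside $H$ or its argument could wind by more than $\pi$ on each half of the imaginary boundary, and the resulting $\ph$ would fall outside $[-1, 1]$. A clean route is first to verify the result when $\mu_\pm$ are finite linear combinations of point masses (so that $\Phi$ is a rational function and both the zero locus and the argument variation are directly computable), and then to extend to the general case by weak approximation of $\mu_\pm$ combined with local uniform convergence of the associated holomorphic extensions on $H$.
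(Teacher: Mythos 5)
The paper does not prove this proposition: it is imported directly from Corollary~3.3 and Proposition~5.1 of~\cite{bell}, so there is no internal proof to compare against. I therefore assess your sketch on its own terms, and there are two genuine gaps.

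For (a)\,$\Leftrightarrow$\,(b), the Fubini computation works only when $g$ is integrable, but the proposition assumes only local integrability. The absolute convergence Fubini needs on, say, $(0,\infty)$ is $\int_0^\infty \int_{(0,\infty)} e^{-sx}\,\mu_+(ds)\,dx = \int_0^\infty g(x)\,dx$, which can be infinite. The condition $\int \mu_\pm(ds)/(1+s) < \infty$ you invoke controls $g$ near the origin, not at infinity, so it is not the summability Fubini needs. In the non-integrable case $\laplace g(i\xi)$ is only an improper integral, and one must first integrate by parts against $e^{-i\xi x}/(i\xi)$ using eventual monotonicity of $g$ to convert it to an absolutely convergent Riemann--Stieltjes integral --- this is precisely the manoeuvre in formula~\eqref{eq:lf} of Section~\ref{sec:bs2} --- and only then does a Fubini-type computation run.

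For (b)\,$\Leftrightarrow$\,(c), the architecture (Sokhotski--Plemelj boundary values, Nevanlinna-type reconstruction, condition~\ref{thm:bell:c} as the regularity hypothesis) is right, but the two central claims are not yet proofs. A semicircular argument-principle bound for zero-freeness is delicate: the contour meets the imaginary axis where $\Phi$ need not extend continuously if $\mu_\pm$ are singular, and when $m=0$ the function decays at infinity, complicating the winding count. More seriously, the inference ``$\im\Phi(is^+)\le 0$ on the upper imaginary boundary, hence the continuous argument lies in $[-\pi,0]$'' is only a statement modulo $2\pi$; zero-freeness by itself does not prevent the continuous argument from drifting to $-\pi - 2\pi k$ (the function $e^{-\xi}$ is zero-free in the half-plane with unbounded boundary argument). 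What closes both gaps at once is a direct cone bound along rays: writing $\zeta = i\xi = re^{i\theta}$ with $\theta\in(0,\pi)$, each summand $1/(\zeta+s)$ with $s>0$ has argument strictly in $(-\theta,0)$, each $-1/(\zeta-s)$ has argument strictly in $(0,\pi-\theta)$, and $m\ge 0$ contributes argument $0$, so the total lies in the open half-plane $\{\im(ze^{i\theta})>0\}$ as soon as one of $m,\mu_+,\mu_-$ is nontrivial. This gives zero-freeness and $\arg\Phi(re^{i\theta})\in(-\theta,\pi-\theta)$ simultaneously, and letting $\theta\to\pi^-$ and $\theta\to 0^+$ yields $\ph(s)\sign s \in [0,1]$ with no separate winding argument. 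Your ``main obstacle'' paragraph correctly identifies that the sign separation of the measure is the crux; the point-mass approximation you propose is workable but indirect, and the reverse implication (c)\,$\Rightarrow$\,(b) still needs a genuine Herglotz-representation step (that $\Phi$ is the Cauchy transform of its boundary jump), which you assert rather than prove.
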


Note that condition~\ref{it:amcm:exp} expresses the Fourier transform of $g$ as in~\eqref{eq:bell}, with $a = 0$, appropriately chosen $b$, and $\ph$ such that $\ph(s) \sign s \in [0, 1]$ for almost all $s \in \R$.

A function $h$ is said to be a \emph{Pólya frequency function} if and only if it is the density function of a probability distribution, which is a weak limit of convolutions of exponential distributions (and their mirror images). This is the classical definition, and a number of equivalent variants exist, some of which are collected in the following statement. We remark that we will not use item~\ref{it:pff:tp}, which is only given for reader's convenience. For further details and detailed discussion, we refer to the monograph~\cite{karlin}, as well as to the original works of Schoenberg~\cite{schoenberg1,schoenberg2}; item~\ref{it:pff:exp} is taken from Proposition~5.3 in~\cite{bell}.

\begin{figure}
\centering
\includegraphics[width=0.8\textwidth]{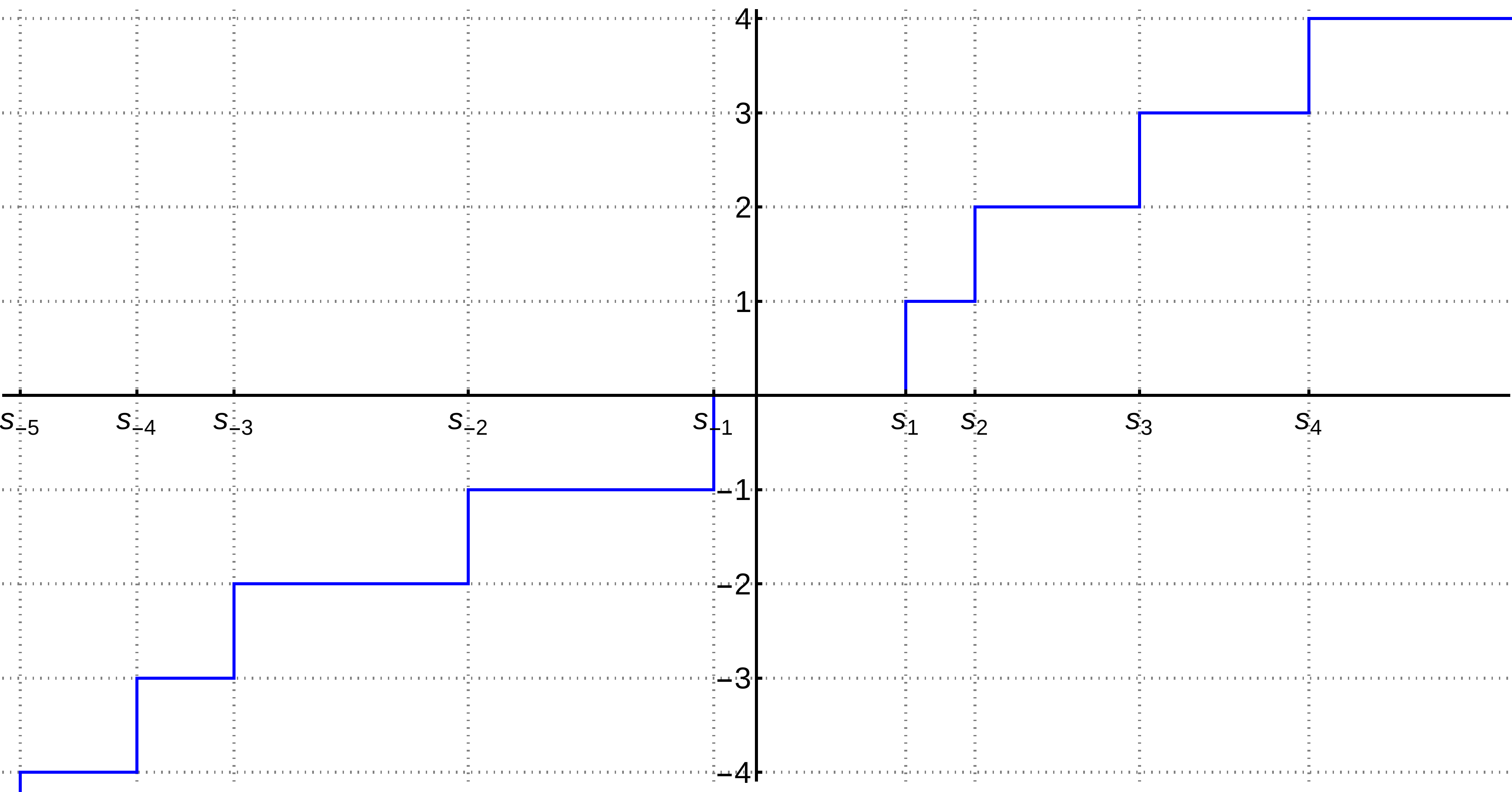}
\caption{A sample function $\ph$ in Proposition~\ref{prop:pff}.}
\label{fig:pff-phi}
\end{figure}

\begin{proposition}
\label{prop:pff}
The following conditions are equivalent:
\begin{enumerate}[label=\textnormal{(\alph*)}]
\item\label{it:pff} $h$ is a Pólya frequency function;
\item\label{it:pff:int} $h$ is integrable, and the Fourier transform of $h$ is given by
\formula[eq:pff]{
 \laplace h(i \xi) & = e^{-a \xi^2 - i b \xi} \prod_{k = 1}^N \frac{e^{i \alpha_k \xi}}{1 + i \alpha_k \xi}
}
for $\xi \in \R$, where $a \ge 0$, $b \in \R$, $N \in \{0, 1, 2, \ldots, \infty\}$ and $\alpha_k \in \R \setminus \{0\}$ for every $k$, and we assume that $\sum_{k = 1}^N |\alpha_k|^2 < \infty$; conversely, any such parameters $a$, $b$, $N$ and $\alpha_k$ correspond to a Pólya frequency function;
\item\label{it:pff:exp} $h$ is integrable, and the Fourier transform of $h$ is of the form~\eqref{eq:bell} for $\xi \in \R$, where $a \ge 0$, $b \in \R$, $c = 0$, $\ph$ is non-decreasing and only takes integer values, $\ph(s) = 0$ in a neighbourhood of $0$, and integrability condition~\ref{thm:bell:b} of Theorem~\ref{thm:bell} is satisfied (see Figure~\ref{fig:pff-phi}); conversely, any such parameters $a$, $b$ and $\ph$ correspond to a Pólya frequency function;
\item\label{it:pff:kernel} $h$ is integrable, with integral $1$, and it is a \emph{variation-diminishing convolution kernel}: for every bounded Borel function $f$, the convolution $f * h$ changes its sign no more times than $f$ does;
\item\label{it:pff:tp} $h$ is integrable, with integral $1$, and $h(x - y)$ is a totally positive kernel.
\end{enumerate}
\end{proposition}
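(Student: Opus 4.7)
The plan is to combine the classical characterizations of P\'olya frequency functions due to Schoenberg with a direct Fourier-analytic computation converting the product form~\eqref{eq:pff} to the exponential integral form~\eqref{eq:bell}. The equivalences \ref{it:pff} $\iff$ \ref{it:pff:int} $\iff$ \ref{it:pff:kernel} $\iff$ \ref{it:pff:tp} are classical, and I would cite~\cite{schoenberg1,schoenberg2} and Chapter~7 of~\cite{karlin}. Briefly, an exponential random variable with rate $1/\alpha$ ($\alpha>0$), centred by a shift of $-\alpha$, has Fourier transform $\frac{e^{i\alpha\xi}}{1+i\alpha\xi}$, and its mirror image corresponds to $\alpha<0$; finite convolutions of such centred exponentials, possibly convolved with a Gaussian, have Fourier transforms of the product form~\eqref{eq:pff}. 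Weak limits extend this to $N=\infty$ precisely when $\sum|\alpha_k|^2<\infty$, with the factor $e^{-a\xi^2}$ arising from accumulation of infinitely many small exponentials. The variation-diminishing and total-positivity characterisations follow from the fact that each elementary factor has these properties and they are preserved under convolution and weak limits.

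For the equivalence \ref{it:pff:int} $\iff$ \ref{it:pff:exp}, which is Proposition~5.3 in~\cite{bell}, the key identity is
\formula{
 i \alpha \xi - \log(1 + i \alpha \xi) & = \int_{1/\alpha}^{\infty} \biggl( \frac{1}{i\xi + s} - \frac{1}{s} + \frac{i \xi}{s^2} \biggr) ds
}
valid for $\alpha>0$, together with the analogous identity on $(-\infty,1/\alpha]$ for $\alpha<0$; both are verified directly using $\log\tfrac{s+i\xi}{s}-\tfrac{i\xi}{s}$ as antiderivative. Taking logarithms in~\eqref{eq:pff} and interchanging summation and integration (justified by $\sum|\alpha_k|^2<\infty$) casts the exponent in the form appearing in~\eqref{eq:bell}, with $\ph$ equal to the sum of $+\ind_{[1/\alpha_k,\infty)}$ for $\alpha_k>0$ and $-\ind_{(-\infty,1/\alpha_k]}$ for $\alpha_k<0$. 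This $\ph$ is non-decreasing, integer-valued, and vanishes on a neighbourhood of $0$, since $\sum|\alpha_k|^2<\infty$ forces $\sup_k|\alpha_k|<\infty$ and hence $\inf_k|1/\alpha_k|>0$. The difference between the pure compensation $\tfrac1s-\tfrac{i\xi}{s^2}$ used above and the truncated one $(\tfrac1s-\tfrac{i\xi}{s^2})\ind_{\R\setminus(-1,1)}(s)$ appearing in~\eqref{eq:bell} is affine in $\xi$ and is absorbed into the parameters $b$ and $c$; the integrability condition~\ref{thm:bell:b} on $\ph$ is equivalent to $\sum|\alpha_k|^2<\infty$ by a direct estimate.

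The main obstacle is the classical equivalence \ref{it:pff} $\iff$ \ref{it:pff:int}, which relies on Schoenberg's subtle complex-analytic analysis of $1/\laplace h$ as an entire function of order at most~$2$ whose zeros are confined to the imaginary axis. The remaining conversions, including the passage between \ref{it:pff:int} and \ref{it:pff:exp}, are largely bookkeeping once the product representation is in hand.
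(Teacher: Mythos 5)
The paper gives no proof of Proposition~\ref{prop:pff}: it refers to~\cite{karlin} and~\cite{schoenberg1,schoenberg2} for the classical equivalences and to Proposition~5.3 of~\cite{bell} for item~\ref{it:pff:exp}, which is exactly the route you take, so your proposal is consistent with the paper's treatment. Your sketch of the passage \ref{it:pff:int} $\Leftrightarrow$ \ref{it:pff:exp} is correct, and the key identity is immediately verified from the antiderivative $\log\tfrac{s+i\xi}{s}-\tfrac{i\xi}{s}$ you name. One wrinkle worth flagging: if you carry the conversion through to the truncated compensator $(\tfrac1s-\tfrac{i\xi}{s^2})\ind_{\R\setminus(-1,1)}(s)$ appearing in~\eqref{eq:bell}, the constant that gets absorbed is $c=-\int_{-1}^1\ph(s)/s\,ds$, which vanishes only when every $|\alpha_k|<1$, i.e.\ when $\ph\equiv 0$ on all of $(-1,1)$ rather than merely near $0$; with the untruncated compensator (which your identity produces directly) one does get $c=0$ on the nose. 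So the assertion \textquotedblleft $c=0$\textquotedblright\ in item~\ref{it:pff:exp} as stated is imprecise, or reflects a different normalisation in~\cite{bell}; this is a quibble with the statement, not a gap in your argument.
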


A detailed discussion of Pólya frequency functions can be found in~\cite{hw}. We note that there is a close connection between the parameters $\alpha_k$ and the function $\ph$: the numbers $s_k = 1 / \alpha_k$ are the locations of the jumps of $\ph$, repeated according to the height of the jump. More precisely, $\ph$ is the distribution function, normalised by the condition $\ph(0) = 0$, of the measure $\sum_{k = 1}^N \delta_{1 / \alpha_k}(ds)$.

The main idea behind the proof of Theorem~\ref{thm:bell} in~\cite{bell} is as follows: if $f$ satisfies the assumptions of the theorem, then $f$ can be expressed as the convolution of $g$ and $h$, where $g$ is a locally integrable $\amcm$ extended function which converges to zero at $\pm \infty$, and $h$ is a Pólya frequency function. It is then proved that locally integrable $\amcm$ extended functions which converge to zero at $\pm \infty$ are weakly bell-shaped. By condition~\ref{it:pff:kernel} in Proposition~\ref{prop:pff}, $f$ is weakly bell-shaped. Furthermore, a smooth weakly bell-shaped function is automatically strictly bell-shaped. We may therefore re-phrase Theorems~\ref{thm:bell} and~\ref{thm:bell2} as follows.

\begin{corollary}
\label{cor:bell}
The following conditions are equivalent:
\begin{enumerate}[label=\textnormal{(\alph*)}]
\item $f$ is a weakly bell-shaped extended function;
\item $f$ is the convolution of a Pólya frequency function and a locally integrable $\amcm$ extended function which converges to zero at $\pm \infty$;
\item $f$ is a locally integrable extended function which converges to zero at $\pm \infty$, which is non-decreasing near $-\infty$ and non-increasing near $\infty$, the Fourier transform of $f$ is given by~\eqref{eq:bell}, and conditions~\ref{thm:bell:a}, \ref{thm:bell:b} and~\ref{thm:bell:c} of Theorem~\ref{thm:bell} are satisfied.
\end{enumerate}
\end{corollary}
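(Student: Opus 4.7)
The plan is to piece together the three equivalences from results already in hand; the corollary is essentially a re-packaging of Theorems~\ref{thm:bell} and~\ref{thm:bell2} in light of Propositions~\ref{prop:amcm} and~\ref{prop:pff}. The implication (a)~$\Rightarrow$~(c) is the content of Theorem~\ref{thm:bell2} combined with Theorem~\ref{thm:bell}: Theorem~\ref{thm:bell2} provides the representation~\eqref{eq:bell} for every weakly bell-shaped $f$, and the tail monotonicity of $f$ follows by approximating $f$ with the strictly bell-shaped functions $f * G_t$, whose first derivatives change sign exactly once and are therefore eventually monotone, with the monotonicity passing to the vague limit. The converse (c)~$\Rightarrow$~(a) is, in view of Theorem~\ref{thm:bell}, automatic.

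For the decomposition (c)~$\Rightarrow$~(b), I would exploit the geometric picture of Figure~\ref{fig:phi}: the level-crossing condition forces the graph of $\ph$ to lie in the union of the rectangles $[s_k, s_{k+1}] \times [k, k+1]$. Define a step function $\ph_1$ by $\ph_1(s) = k$ on $(s_k, s_{k+1})$ for $k \ge 0$ and $\ph_1(s) = k+1$ on $(s_k, s_{k+1})$ for $k \le -1$, so that $\ph_1$ is non-decreasing, integer-valued, and vanishes near $0$. By Proposition~\ref{prop:pff}~\ref{it:pff:exp}, the parameters $(a, b, 0, \ph_1)$ define a Pólya frequency function $h$, where the integrability $\sum |\alpha_k|^2 < \infty$ follows from~\ref{thm:bell:b}. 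The residual $\ph_2 = \ph - \ph_1$ satisfies $\ph_2(s) \sign s \in [0,1]$, so by Proposition~\ref{prop:amcm}~\ref{it:amcm:exp} the parameters $(0, 0, c, \ph_2)$ define an $\amcm$ function $g$, which is locally integrable and vanishes at $\pm\infty$ by Proposition~\ref{prop:amcm}. The Fourier transforms multiply to give $\Phi$, hence $f = g * h$.

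For (b)~$\Rightarrow$~(a), I would combine two ingredients already cited in the excerpt. First, a locally integrable $\amcm$ extended function converging to zero at $\pm\infty$ is weakly bell-shaped (shown in~\cite{bell}), so $g * G_t$ is strictly bell-shaped for every $t > 0$. Second, by Proposition~\ref{prop:pff}~\ref{it:pff:kernel} the Pólya frequency function $h$ is a variation-diminishing convolution kernel. Writing $(g * h) * G_t = (g * G_t) * h$, the standard preservation of strict bell-shape under convolution with a Pólya frequency function (a consequence of the variation-diminishing and total-positivity properties, with the Gaussian factor supplying smoothness and decay) yields that $(g * h) * G_t$ is strictly bell-shaped for every $t > 0$, so $g * h$ is weakly bell-shaped.

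The proof is therefore entirely a synthesis, with the whole mathematical substance concentrated in Theorem~\ref{thm:bell2}. The two technical points worth careful verification are the transfer of the regularity condition~\ref{thm:bell:c} from $\Phi = \Phi_1 \Phi_2$ to the $\amcm$ factor $\Phi_2$ (facilitated by the boundedness of the PFF factor $\Phi_1$ on the real line) and the preservation of strict bell-shape under convolution with a Pólya frequency function; both are essentially standard and already implicit in~\cite{bell}, so the real obstacle—and the bulk of the paper—is Theorem~\ref{thm:bell2} itself.
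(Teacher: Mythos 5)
Your proof is correct and takes essentially the same approach as the paper, which explicitly states that Corollary~\ref{cor:bell} requires no proof: it is simply a repackaging of Theorems~\ref{thm:bell} and~\ref{thm:bell2} together with the $g * h$ decomposition sketched in Section~\ref{sec:intro:decomp} (itself from Lemma~5.4 of~\cite{bell}) and the weak bell-shape of locally integrable $\amcm$ functions plus the variation-diminishing property of Pólya frequency functions. You have correctly isolated the two technical verifications (transfer of the regularity condition~\ref{thm:bell:c} to the $\amcm$ factor, and preservation of strict bell-shape under convolution with a Pólya frequency function) that the paper likewise delegates to~\cite{bell}.
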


As described in detail in Lemma~5.4 in~\cite{bell}, under the assumptions of Theorem~\ref{thm:bell} the decomposition $f = g * h$ of $f$ into the convolution of an $\amcm$ extended function $g$ and a Pólya frequency function $h$ is relatively easy. Indeed, one constructs an integer-valued, non-decreasing function $\ph_h(s)$ so that $\ph_h(s) \le \ph(s) \le \ph_h(s) + 1$ for $s > 0$ and $\ph_h(s) - 1 \le \ph(s) \le \ph_h(s)$ for $s < 0$ (this is possible by virtue of the level-crossing condition~\ref{thm:bell:a} in Theorem~\ref{thm:bell}), and then one defines $\ph_g(s) = \ph(s) - \ph_h(s)$. By construction, $\ph_g(s) \sign s \in [0, 1]$ for every $s \in \R$. If $g$ is an $\amcm$ extended function associated with parameters $c$ and $\ph_g$, and $h$ is a Pólya frequency function corresponding to parameters $a$, appropriately modified $b$ and $\ph_h$, then $f = g * h$, as desired.

Recall that $\ph_h$ has jump discontinuities at points at which $\ph - k$ changes its sign. We stress that if the function $\ph - k$ is equal to zero on an interval for some $k \in \Z \setminus \{0\}$, then more than one choice of $\ph_h$ is possible, and thus the decomposition $f = g * h$ is not unique. However, if $\ph - k$ changes its sign at at most one point for every $k \in \Z \setminus \{0\}$, then the factorisation $f = g * h$ is indeed unique.

While in~\cite{bell} exponential representations of Fourier transforms of $g$ and $h$ (given in item~\ref{it:amcm:exp} of Proposition~\ref{prop:amcm} and item~\ref{it:pff:exp} of Proposition~\ref{prop:pff}) played a crucial role, here we equally often refer to condition~\ref{it:amcm:int} in Proposition~\ref{prop:amcm} and condition~\ref{it:pff:int} in Proposition~\ref{prop:pff}.

\subsection{Zeroes of derivatives of bell-shaped functions}
\label{sec:intro:bs2}

\begin{figure}
\centering\scriptsize
\begin{tabular}{ccc}
\includegraphics[width=0.3\textwidth]{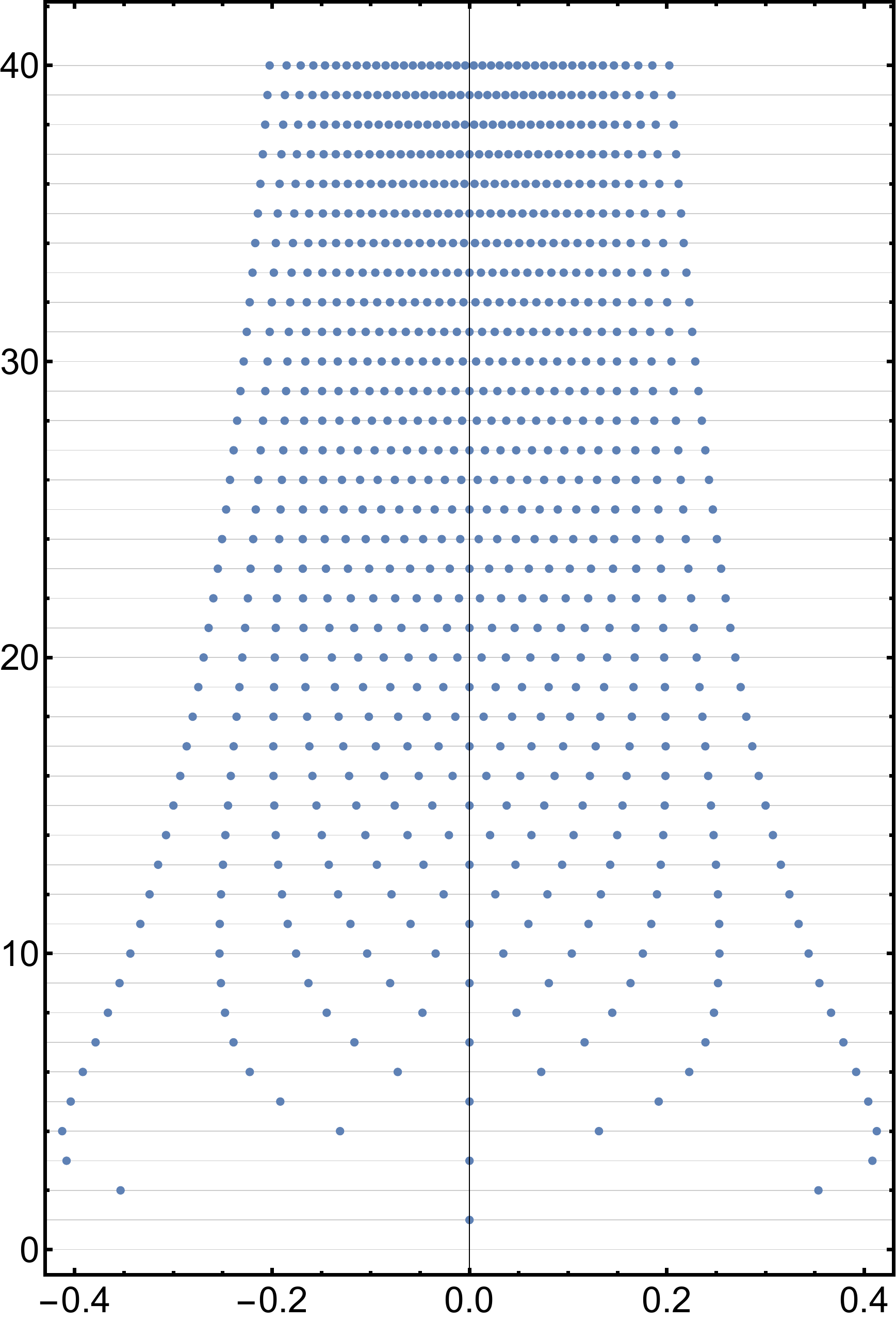} &
\includegraphics[width=0.3\textwidth]{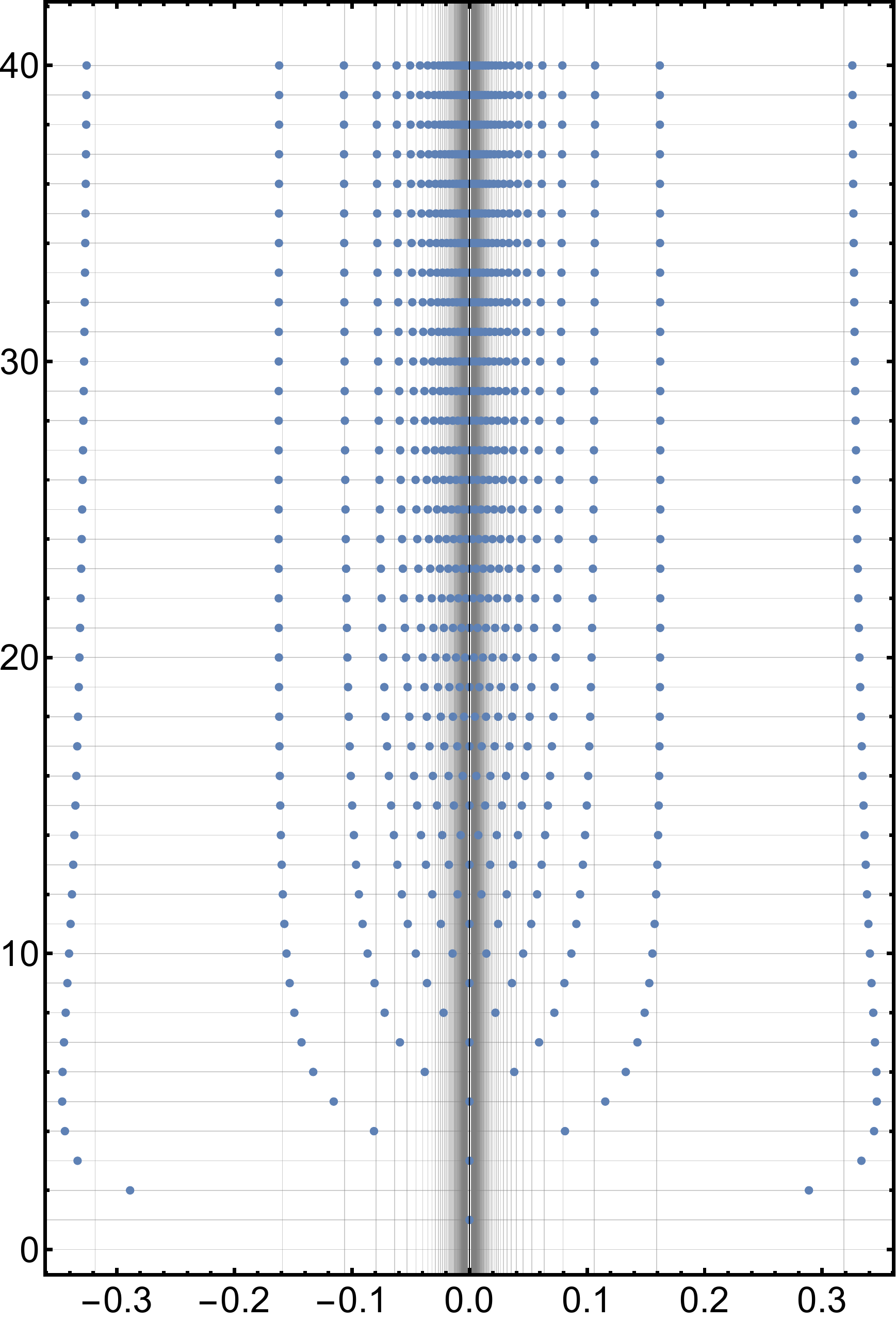} &
\includegraphics[width=0.3\textwidth]{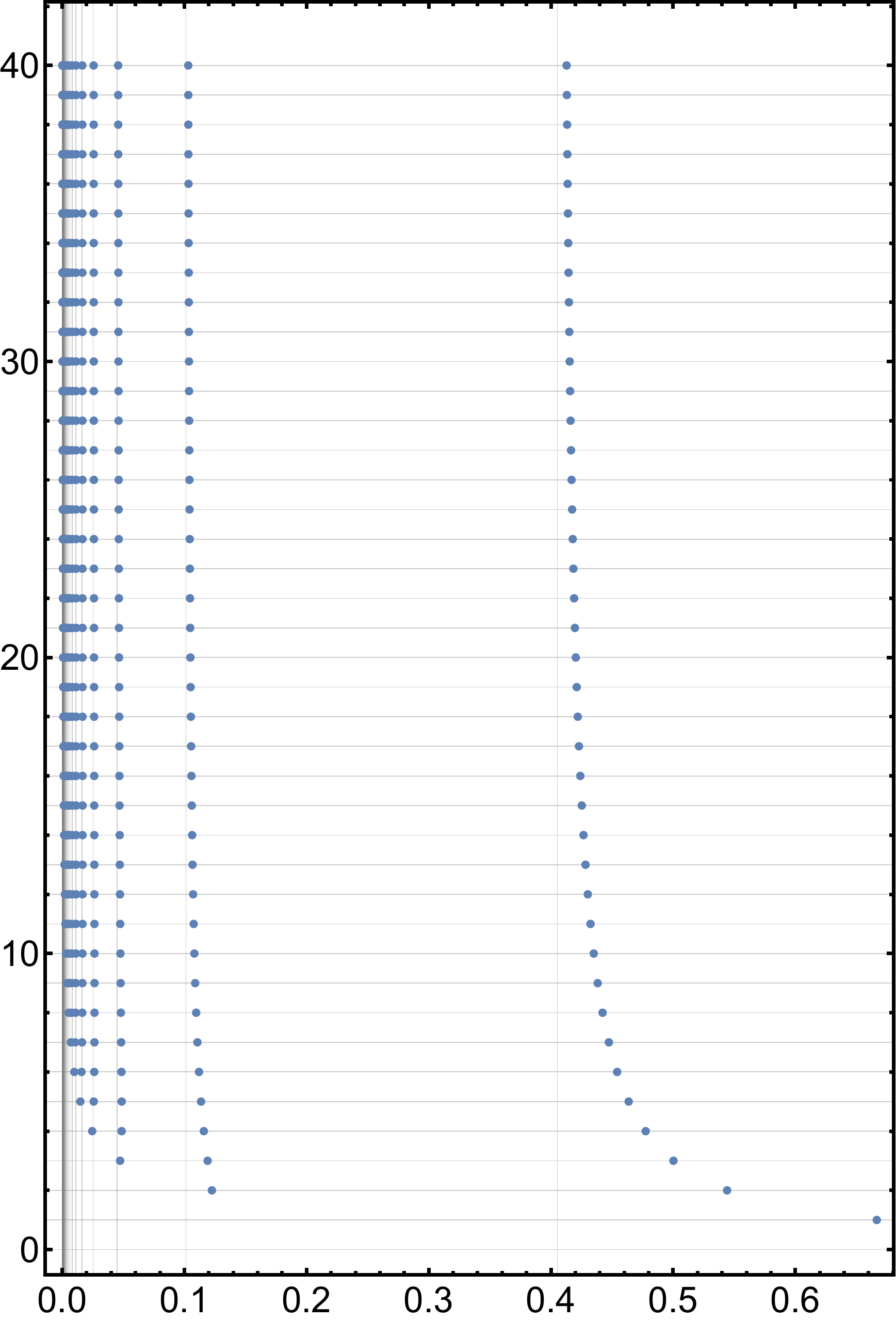} \\
(a) & (b) & (c)
\end{tabular}
\caption{Location of the zeroes of $f^{(n)}(n x)$ (horizontal axis) versus $n = 0, 1, 2, \ldots, 40$ (vertical axis) for: (a)~$f(x) = \exp(-x^2)$ (the density function of the normal distribution); (b)~$f(x) = (1 + x^2)^{-1}$ (the density function of the Cauchy distribution); (c)~$f(x) = x^{-3/2} \exp(-1/x)$ (the density function of the Lévy distribution). Grid lines indicate limiting positions $\alpha_k = 1 / s_k$ of the zeroes: (a)~all zeroes accumulate near $0$; (b)~$\alpha_k = 1 / (k \pi)$, $k \in \Z \setminus \{0\}$; (c)~$\alpha_k = 4 / (k^2 \pi^2)$, $k \in \{1, 2, \ldots\}$.}
\label{fig:zeroes}
\end{figure}

\begin{figure}
\centering\scriptsize
\begin{tabular}{ccc}
\includegraphics[width=0.45\textwidth]{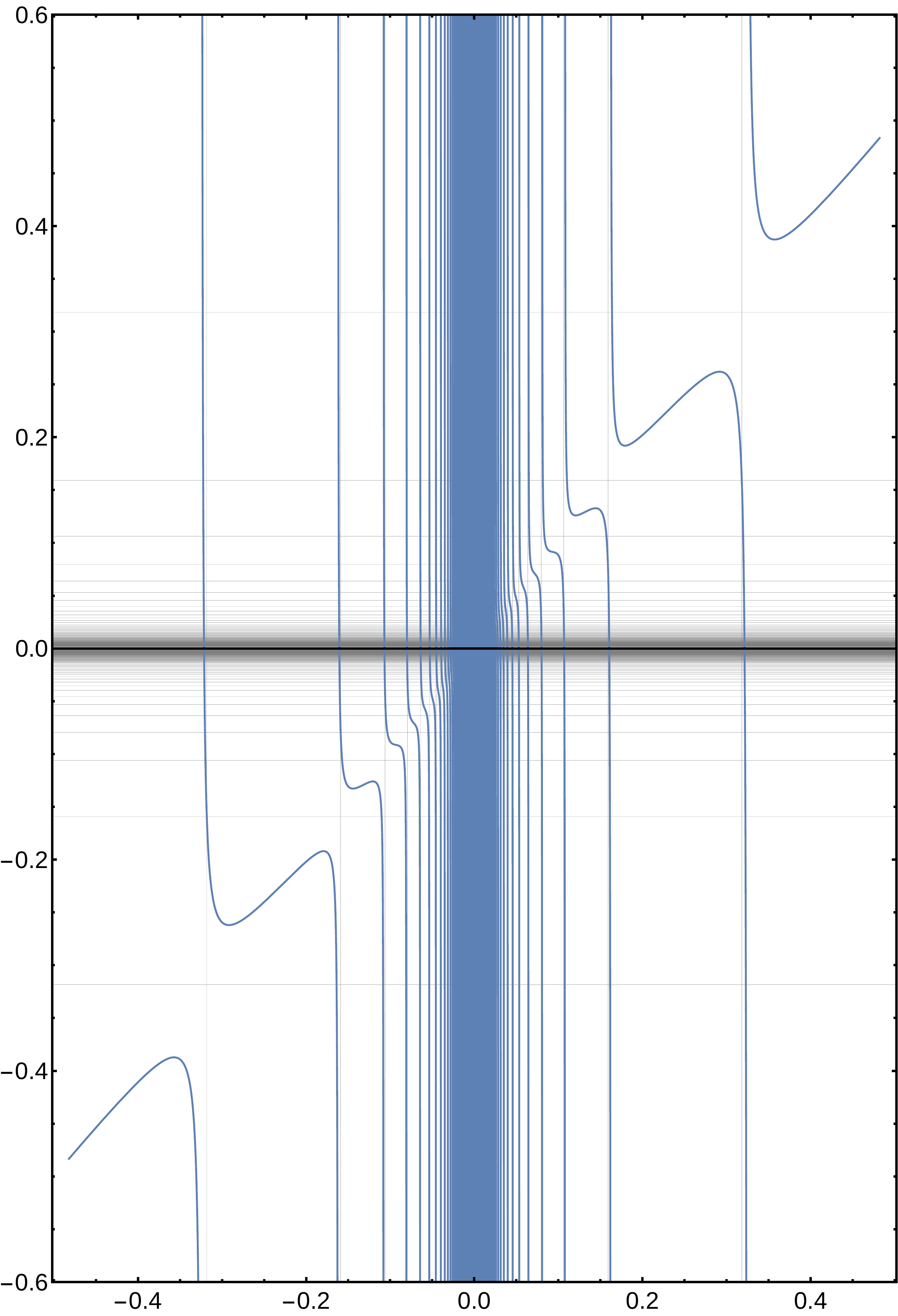} &
\includegraphics[width=0.45\textwidth]{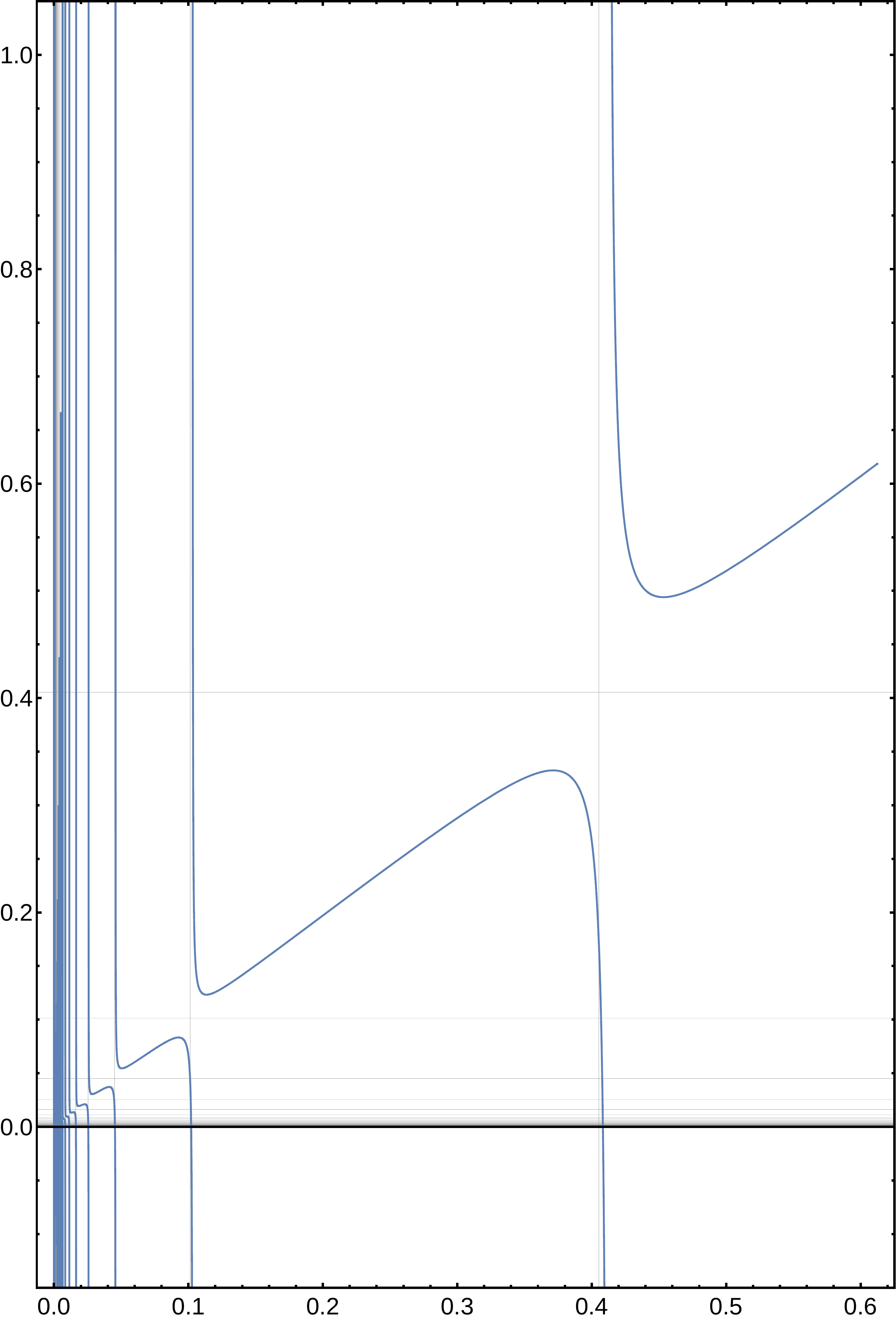} \\
(a) & (b)
\end{tabular}
\caption{Location of the zeroes of $f^{(n)}_p(n x)$ for $f_p(x) = f(x) + p f'(x)$ (horizontal axis) versus parameter $p$ (vertical axis), with $n = 100$ and: (a)~$f(x) = (1 + x^2)^{-1}$ (the density function of the Cauchy distribution); (b)~$f(x) = x^{-3/2} \exp(-1/x)$ (the density function of the Lévy distribution). Horizontal and vertical grid lines indicate parameters $p$ for which $f_p$ is bell-shaped, and at the same time the limiting positions $\alpha_k = 1 / s_k$ of the zeroes of $f^{(n)}(n x)$ (as in Figure~\ref{fig:zeroes})}: (a)~$\alpha_k = 1 / (k \pi)$, $k \in \Z \setminus \{0\}$; (b)~$p = 4 / (k^2 \pi^2)$, $k \in \{1, 2, \ldots\}$.
\label{fig:derivatives}
\end{figure}

Suppose that $f = g * h$ is the decomposition of a strictly bell-shaped function into the convolution of an $\amcm$ function $g$ and a Pólya frequency function $h$. For simplicity, below we call such a decomposition a \emph{canonical factorisation} of $f$. Interestingly, the parameters $a$ and $\alpha_k$ in the representation~\eqref{eq:pff} of the Pólya frequency function $h$ are (essentially) determined by the zeroes of the derivatives of $f$; see Figure~\ref{fig:zeroes}. Recall that $s_k = 1 / \alpha_k$ are the points at which the corresponding function $\ph_h$ (described in item~\ref{it:pff:exp} in Proposition~\ref{prop:pff}; see Figure~\ref{fig:phi}) has jump discontinuities.

\begin{proposition}
\label{prop:zeroes}
Let $f$ be a strictly bell-shaped function, and denote the zeroes of $f^{(n)}$ by $n \alpha_{n,k}$, $k = 1, 2, \ldots, n$. Then the sequence of measures
\formula[eq:zmn]{
 \sum_{k = 1}^n \alpha_{n,k}^2 \delta_{\alpha_{n,k}}(dx) 
}
is relatively compact with respect to the topology of weak convergence. In particular, all numbers $\alpha_{n,k}$ are uniformly bounded. The partial limits of the sequence~\eqref{eq:zmn} are of the form
\formula[eq:zm]{
 2 a \delta_0(dx) + \sum_{k = 1}^N \alpha_k^2 \delta_{\alpha_k}(dx) ,
}
where $\alpha_k \ne 0$ is a (finite or infinite) sequence of points, and $a \ge 0$. In this case there is a canonical factorisation $f = g * h$ of $f$ such that the Fourier transform of $h$ has the representation~\eqref{eq:pff} with the parameters $a$ and $\alpha_k$ defined above. Consequently, the function $\ph$ in the representation~\eqref{eq:bell} of the Fourier transform of $f$ crosses integer levels at the family of points $1 / \alpha_k$.
\end{proposition}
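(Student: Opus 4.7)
My plan is to exploit the canonical factorisation $f = g * h$ from Corollary~\ref{cor:bell} and to analyse the derivatives $f^{(n)}$ at the macroscopic scale $x = ny$ via the exponential representation \eqref{eq:bell} of the Fourier transform $\Phi = \laplace f(i\cdot)$. The aim is to show that the zeroes $n\alpha_{n,k}$ of $f^{(n)}$ are determined in the limit by the parameters $a$ and $\alpha_k$ of the Pólya frequency factor $h$, so that the weighted empirical measures $\mu_n = \sum_k \alpha_{n,k}^2 \delta_{\alpha_{n,k}}$ converge along subsequences to the target atomic measure.

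For the relative compactness of $\mu_n$, Corollary~\ref{cor:zeroes} already confines all atoms to a common compact interval $[p,q]$, so the task reduces to the uniform bound $\sup_n \sum_k \alpha_{n,k}^2 < \infty$. The natural limit value is $2a + \sum_k \alpha_k^2$, which I read off as $-(\log \Phi_h)''(0)$ from \eqref{eq:pff} and recognise as the variance of $h$. I would deduce the finite-$n$ inequality $\sum_k (n \alpha_{n,k})^2 \le C n^2$ from this Fourier identity combined with Rolle's interlacing of zeroes of successive derivatives of $f$, since each $f^{(n+1)}$ has a zero strictly between any two consecutive zeroes of $f^{(n)}$ and the $\amcm$ factor $g$ contributes only a lower-order perturbation to the second-moment estimate.

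To characterise a subsequential weak limit $\mu = \lim \mu_{n_j}$ I would use the Bromwich representation
\[ f^{(n)}(ny) \;=\; \frac{1}{2\pi i} \int_{\gamma} \zeta^n \, e^{ny\zeta} \, \laplace f(\zeta) \, d\zeta, \]
deform $\gamma$ into the region of holomorphy of $\laplace f$ (using the analytic extension noted in the remark after Theorem~\ref{thm:bell}), and apply the saddle-point method at the saddle $\zeta_* = -1/y$ of the large-$n$ phase $n(y\zeta + \log\zeta)$. Using \eqref{eq:bell}, the leading-order contribution vanishes precisely when $s = 1/y$ coincides with an integer level-crossing of $\ph_h$, that is, when $y = \alpha_k$ for some $k$; for $y$ near $0$ the Gaussian term $-a\xi^2$ dominates and zeroes appear at the intermediate scale $\sqrt{n}$, which after rescaling by $n$ collapses to the point mass $2a\delta_0$ (as can be checked against the Hermite example). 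Matching multiplicities — each nonzero atom gaining weight $\alpha_k^2$ from the Jacobian of the saddle — produces the claimed form $\mu = 2a\delta_0 + \sum_k \alpha_k^2 \delta_{\alpha_k}$. Taking a Pólya frequency function $h$ with these recovered parameters and the complementary $\amcm$ factor $g$ then provides the announced canonical factorisation of $f$, and the points $s_k = 1/\alpha_k$ are by construction the integer level-crossings of $\ph$.

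The main technical obstacle will be making the saddle-point analysis of $f^{(n)}(ny)$ rigorous uniformly for $y \in [p,q]$: the exponent in \eqref{eq:bell} is the Cauchy integral of a general function $\ph$ satisfying only the level-crossing condition~\ref{thm:bell:a} and the integrability condition~\ref{thm:bell:b}, so controlling the contour away from $\zeta_*$ requires careful integration by parts against $\ph(s)\,ds$ and a contour that sidesteps both the poles of the Pólya factor and the branch structure inherited from the $\amcm$ factor. A secondary subtlety is the case when $\ph - k$ vanishes on a whole interval, since then different subsequences may select different canonical factorisations; the proposition accommodates this ambiguity by asserting only the existence of a suitable $h$.
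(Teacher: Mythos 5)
Your proposal takes a genuinely different route from the paper, but it has two serious problems.

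\textbf{Circularity.} You invoke Corollary~\ref{cor:zeroes} to confine the atoms $\alpha_{n,k}$ to a fixed compact interval and thereby reduce relative compactness to a uniform second-moment bound. But the paper proves Corollary~\ref{cor:zeroes} \emph{as a consequence} of Proposition~\ref{prop:zeroes}; there is no independent argument available for it. You would need to establish the uniform boundedness of the $\alpha_{n,k}$ from scratch. This is precisely the non-trivial content that the paper obtains through the auxiliary construction (Remark~\ref{rem:aux}): the functions $g_n$ from the proof of Theorem~\ref{thm:bell2} give an exact Cauchy--Stieltjes identity~\eqref{eq:cf2}, and the associated step functions $\ph_{n,h}$ (with unit jumps exactly at the points $1/\alpha_{n,k}$) are pinched between $\ph_n$ and $\ph_n \pm 1$; the resulting measures $s\,\ph_{n,h}(1/s)\,ds$ are then dominated by $a\,\delta_0(ds) + s\,\ph(1/s)\,ds$, whose finiteness follows from the integrability condition~\ref{thm:bell:b}. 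Without something playing the role of this comparison, your claimed inequality $\sum_k \alpha_{n,k}^2 \le C$ is unsupported; Rolle interlacing controls the number and order of zeroes but does not by itself give a quantitative bound on their second moments.

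\textbf{The saddle-point step is not carried out and is likely unworkable as stated.} Your Bromwich/Laplace-method strategy locates the saddle at $\zeta_\ast = -1/y$, which lies on the real axis, whereas $\laplace f$ is known to extend holomorphically only to one side of the imaginary axis (equivalently, $\Phi(\xi) = \laplace f(i\xi)$ extends to $\re\xi > 0$). The real axis is precisely where the representation~\eqref{eq:bell} acquires its singular behaviour from $\ph$, and the $\amcm$ factor generically has a branch cut there, so the contour cannot reach the saddle. Even granting a deformation, controlling the error uniformly in $y$ with only the level-crossing and integrability hypotheses on $\ph$ is far beyond a sketch. You acknowledge this as the main obstacle, but that means the proposal does not yet constitute a proof.

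\textbf{What the paper does instead.} The paper avoids any asymptotic analysis of $f^{(n)}$ directly. Starting from Remark~\ref{rem:aux}, it studies the measures $\mu_n(ds) = s\,\ph_{n,h}(1/s)\,ds$, shows they are vaguely relatively compact, identifies partial limits as $a\,\delta_0 + s\,\ph_h(1/s)\,ds$ with $\ph_h$ a non-decreasing integer-valued step function, and then evaluates both sides against the test function $w(s) = v'(s)/s$ with $v(s) = s^2 u(s)$; integration by parts converts $\int w\,d\mu_n$ exactly into $\sum_k \alpha_{n,k}^2 u(\alpha_{n,k})$. This identity is the engine that yields both the relative compactness of~\eqref{eq:zmn} and the form~\eqref{eq:zm} of its partial limits, with no contour integrals or saddle points. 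If you want to complete your argument you should look for an analogue of the $\ph_{n,h}$ comparison; as written, the proposal has genuine gaps.
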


\begin{remark}
We emphasize that if the function $\ph$ in the representation~\eqref{eq:bell} of the Fourier transform of $f$ takes an integer value on an interval, then the canonical factorisation $f = g * h$ is not unique. In this case it is not clear whether the sequence of measures~\eqref{eq:zmn} actually converges: we conjecture this to be true, but we were unable to prove this. If, however, $\ph - k$ changes its sign at at most one point $s_k$ for every $k \in \Z \setminus \{0\}$, then the canonical factorisation is unique, and consequently the sequence of measures~\eqref{eq:zmn} in Proposition~\ref{prop:zeroes} converges weakly to the limit~\eqref{eq:zm}, with the sequence $\alpha_k$ being a rearrangement of the sequence $1 / s_k$. In this case, if $0 < x_1 < x_2$ or $x_1 < x_2 < 0$ and neither $x_1$ nor $x_2$ is equal to any of the numbers $\alpha_k$, then
\formula{
 \lim_{n \to \infty} \# \{k : x_1 < \alpha_{n,k} < x_2\} & = \# \{k : x_1 < \alpha_k < x_2\} ,
}
that is, the numbers $\alpha_j$ coincide with non-zero limits of sequences of the form $\alpha_{n,k(n)}$.
\end{remark}

Proposition~\ref{prop:zeroes} shows that the location of zeroes of the derivatives of a strictly bell-shaped function $f$ essentially describes the factor $h$ in the canonical factorisation $f = g * h$ of $f$. It is an open question whether they also describe the factor $g$. We conjecture that this indeed the case; in other words, if $f_1$ and $f_2$ are two strictly bell-shaped functions such that $f_1^{(n)}(x) = 0$ if and only if $f_2^{(n)}(x) = 0$ for all $n = 1, 2, \ldots$ and $x \in \R$, then $f_1 / f_2$ is constant.

We conclude this section with the following observation. Suppose that $f$ is a strictly bell-shaped function described by Theorem~\ref{thm:bell}. If $k \in \Z \setminus \{0\}$ and $\ph - k$ changes its sign at $1 / p$, then $\ph_p$ defined by
\formula{
 \ph_p(s) & = \begin{cases} \ph(s) - \ind_{[1 / p, \infty)}(s) & \text{if $p > 0$} \\ \ph(s) + \ind_{(-\infty, 1 / p]}(s) & \text{if $p < 0$} \end{cases}
}
satisfies all conditions of Theorem~\ref{thm:bell}. Conversely, if $\ph_p$ satisfies the level-crossing condition~\ref{thm:bell:a} of Theorem~\ref{thm:bell}, then necessarily $\ph - k$ changes its sign at $1 / p$ for some $k \in \Z \setminus \{0\}$. Furthermore, it is easy to see that the parameters $a$, $\ph_p$, $c_p = c + \log |p|$ and appropriately modified $b_p$ correspond in Theorem~\ref{thm:bell} to the function
\formula{
 f_p(x) & = f(x) + p f'(x) .
}
It follows that $f_p$ is (strictly) bell-shaped if and only if $\ph - k$ changes its sign at $1 / p$ for some $k \in \Z \setminus \{0\}$. This proves the following surprising result.

\begin{corollary}
\label{cor:der}
If $f$ is a strictly bell-shaped function, $p \in \R \setminus \{0\}$ and $f_p(x) = f(x) + p f'(x)$, then $f$ is strictly bell-shaped if and only if the function $\ph$ corresponding to $f$ in Theorem~\ref{thm:bell} crosses some non-zero integer level at $1 / p$.

In particular, if $\ph - k$ changes its sign at at most one point $s_k$ for every $k \in \Z \setminus \{0\}$, then $f_p$ is strictly bell-shaped if and only if $p = 0$ or $p = 1 / s_k$ for some $k$; or, equivalently, there is a sequence $\alpha_n$ such that $n \alpha_n$ is a zero of $f^{(n)}$ and $p = \lim_{n \to \infty} (1 / \alpha_n)$.
\end{corollary}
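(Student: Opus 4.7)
The plan is to combine Theorem~\ref{thm:bell} (applied to a modified profile $\ph_p$) with Theorem~\ref{thm:bell2} (which provides a representation for any bell-shaped function), exploiting the elementary identity $\laplace f_p(i\xi) = (1 + ip\xi)\Phi(\xi)$, where $\Phi(\xi) = \laplace f(i\xi)$. The key point is that multiplying the Fourier transform by the affine factor $1 + ip\xi$ amounts, at the level of the exponential representation~\eqref{eq:bell}, to modifying $\ph$ by a unit step located at $s = 1/p$; the problem thus reduces to identifying which such modifications preserve the hypotheses of Theorem~\ref{thm:bell}.

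For the \textbf{sufficiency} direction, I assume that $\ph - k$ changes sign at $1/p$ for some $k \in \Z \setminus \{0\}$ and verify that the $\ph_p$ defined in the paragraph preceding the statement satisfies conditions~\ref{thm:bell:a}, \ref{thm:bell:b} and~\ref{thm:bell:c} of Theorem~\ref{thm:bell}. Condition~\ref{thm:bell:b} is immediate since $\ph_p - \ph$ is bounded and supported away from $0$. Condition~\ref{thm:bell:a} is the delicate one: for $p > 0$, subtracting $\ind_{[1/p,\infty)}$ from $\ph$ shifts each crossing lying beyond $1/p$ down by one integer level, while the hypothesis that $\ph - k$ itself vanishes at $1/p$ ensures that no new level-crossing is created at the jump point; the restriction $k \ne 0$ is precisely what keeps the zero-crossing anchored at $s = 0$. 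Condition~\ref{thm:bell:c} is inherited from that of $\Phi$ because $1 + ip\xi$ is a polynomial regular at $\xi = 0$. Applying Theorem~\ref{thm:bell} with parameters $(a, b_p, c_p, \ph_p)$ yields a weakly bell-shaped function whose Fourier transform, after a direct computation evaluating the contribution of the jump to the integral in~\eqref{eq:bell} up to an affine-in-$i\xi$ residual (absorbed into $b_p$ and $c_p = c + \log|p|$), equals $(1 + ip\xi)\Phi(\xi)$. By Fourier inversion this function coincides with $f_p$, which is therefore weakly bell-shaped and, being smooth, strictly bell-shaped.

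For the \textbf{necessity} direction, I apply Theorem~\ref{thm:bell2} to the strictly bell-shaped $f_p$ to obtain a representation of $\laplace f_p(i\xi) = (1 + ip\xi)\Phi(\xi)$ of the form~\eqref{eq:bell} with some parameters $(\tilde a, \tilde b, \tilde c, \tilde\ph)$. Uniqueness of this representation, guaranteed by the boundary-value recipe in Remark~(b) after Theorem~\ref{thm:bell}, together with the jump computation performed on the sufficiency side, forces $\tilde a = a$ and $\tilde\ph - \ph$ to be a unit step concentrated at $1/p$. The level-crossing condition~\ref{thm:bell:a} for $\tilde\ph$ then requires this jump point to coincide with a crossing of $\ph - k$ for some integer $k$; the anchoring of the $k = 0$ crossing at $s = 0$ excludes $k = 0$ (which would demand $1/p = 0$). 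This yields the equivalence.

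Finally, the \textbf{in-particular} statement is a direct re-packaging. If $\ph - k$ crosses zero at at most one point $s_k$ for each $k \in \Z \setminus \{0\}$, then the set of admissible $p$'s reduces to $\{0\} \cup \{1/s_k : k \in \Z \setminus \{0\}\}$, and Proposition~\ref{prop:zeroes}, applied to the (now unique) canonical factorisation $f = g * h$, identifies the numbers $1/s_k$ as precisely the values $\lim_{n \to \infty}(1/\alpha_n)$ over sequences $\alpha_n$ with $n\alpha_n$ a zero of $f^{(n)}$. The main obstacle is the uniqueness of the representation~\eqref{eq:bell} used in the necessity direction; once uniqueness is granted, the structural constraint of the level-crossing condition pins down the location of the new jump of $\tilde\ph$ at a pre-existing crossing of $\ph$, and the remaining verifications are routine.
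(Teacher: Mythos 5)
Your proposal follows essentially the same route as the paper's own proof, which is sketched in the paragraph preceding the corollary: both use the identity $\laplace f_p(i\xi) = (1 + ip\xi)\laplace f(i\xi)$, both observe that multiplication by $1 + ip\xi$ amounts to modifying $\ph$ by a unit step at $1/p$ (with corresponding adjustments to $b$ and $c$), and both deduce the equivalence from the level-crossing condition together with the uniqueness of the representation in~\eqref{eq:bell} (the paper via Remark~(b), you by the same citation). The only point worth flagging is a small notational wrinkle in the ``in particular'' clause, inherited from the corollary as stated: since the renormalised zeros $\alpha_{n,k(n)}$ converge to $\alpha_k = 1/s_k$ by Proposition~\ref{prop:zeroes}, the admissible nonzero $p$'s are precisely $\lim_{n\to\infty}\alpha_n$, not $\lim_{n\to\infty}(1/\alpha_n)$; your write-up propagates this same slip, but the substance of the argument is correct.
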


We illustrate the above corollary by three examples. If $f$ is the density function of a normal distribution, then $f_p$ is never bell-shaped. This is quite clear: in fact, $f_p$ is never positive. If $f(x) = 1 / (1 + x^2)$, then
\formula{
 f_p(x) & = \frac{1 - 2 p x + x^2}{(1 + x^2)^2}
}
is bell-shaped if and only if $p = 1 / (k \pi)$ for some $k \in \Z \setminus \{0\}$. Similarly, if $f(x) = e^{-1/x} \ind_{(0, \infty)}(x)$, then
\formula{
 f_p(x) & = \biggl( 1 + \frac{p}{x^2} \biggr) e^{-1/x}
}
is bell-shaped if and only if $p = 4 / (k^2 \pi^2)$ for some $k \in \{1, 2, \ldots\}$. The last two examples do not appear to have a simple, elementary derivation.

\subsection{Whale-shaped functions}
\label{sec:intro:ws}

A function $f$ on $(0, \infty)$ is said to be \emph{whale-shaped} if $f$ is positive and smooth, $f$ converges to zero at $0$ and $\infty$, and $f^{(n)}$ changes its sign only once for $n = 1, 2, \ldots$\, More generally, we say that a function $f$ on $(0, \infty)$ is \emph{whale-shaped of order $m \in \{0, 1, 2, \ldots\}$} if $f$ is positive, smooth, $f$ converges to zero at $\infty$, $f^{(n)}$ converges to zero at $0$ for $n = 0, 1, 2, \ldots, m - 1$, and $f^{(n)}$ changes its sign $\min\{n, m\}$ times for $n = 0, 1, 2, \ldots$.\, In particular, $f$ is whale-shaped if it is whale-shaped of order $1$, and $f$ is completely monotone if $f$ is whale-shaped of order $0$. The notion of whale-shaped functions was introduced in~\cite{simon} under the name \emph{weakly bell-shaped functions of order $m$}, where the direct half of the following result was proved. The present name originates in~\cite{js}.

\begin{theorem}
\label{thm:whale}
For $m = 0, 1, 2, \ldots$, the following conditions are equivalent:
\begin{enumerate}[label=\textnormal{(\alph*)}]
\item\label{thm:whale:a} $f$ is a whale-shaped function of order $m$, integrable near $0$ if $m = 0$;
\item\label{thm:whale:b} $f$ is the convolution of $m$ exponential factors of the form $\alpha_j^{-1} \exp(-x / \alpha_j)$, $\alpha_j > 0$, $j = 1, 2, \ldots, m$, and a completely monotone function on $(0, \infty)$ which converges to zero at $\infty$ and which is integrable near $0$.
\end{enumerate}
In particular, every whale-shaped function of order $m$ (integrable near $0$ if $m = 0$) is weakly bell-shaped.
\end{theorem}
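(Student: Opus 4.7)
The direct implication (b) $\Rightarrow$ (a) was established in~\cite{simon}; I would reprove it by induction on $m$. For $m = 0$, Bernstein's theorem writes $f(x) = \int_0^\infty e^{-sx} \mu(ds)$ for some non-negative measure $\mu$, so $f^{(n)}(x) = (-1)^n \int_0^\infty s^n e^{-sx} \mu(ds)$ has constant sign and no sign change, showing that $f$ is whale-shaped of order $0$. For the inductive step, if $g$ is whale-shaped of order $m - 1$ and $f = e_\alpha * g$, then differentiating the convolution integral gives the identity $\alpha f' + f = g$ together with $f(0^+) = 0$; combined with the boundary conditions on $g$, this propagates to $f^{(k)}(0^+) = 0$ for $k \leq m - 1$, and a Rolle-type interlacing argument applied to $g^{(n)} = f^{(n)} + \alpha f^{(n+1)}$ recovers the sign-change count $\min(n, m)$ of $f^{(n)}$ on $(0, \infty)$.

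For the converse (a) $\Rightarrow$ (b), the case $m = 0$ is immediate from Bernstein's theorem, since whale-shape of order~$0$ is equivalent to complete monotonicity for a positive smooth function vanishing at $\infty$ and integrable near $0$. For $m \geq 1$, the plan is to reduce whale-shape to bell-shape. Let $\tilde f$ be the extension of $f$ to $\R$ by setting $\tilde f(x) = 0$ for $x \leq 0$; the boundary conditions $f^{(k)}(0^+) = 0$ for $k = 0, 1, \ldots, m - 1$ ensure that $\tilde f$ is continuous and $C^{m-1}$ on $\R$. I would first establish that $\tilde f$ is weakly bell-shaped; granted this, Corollary~\ref{cor:bell} provides a factorisation $\tilde f = g * h$ with $g$ a locally integrable $\amcm$ extended function converging to zero at $\pm\infty$ and $h$ a Pólya frequency function. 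The support condition $\supp \tilde f \subseteq [0, \infty)$ means that the Laplace transform of $\tilde f$ extends holomorphically and boundedly to $\{\re \xi > 0\}$; inspecting the integral representations in item~\ref{it:amcm:int} of Proposition~\ref{prop:amcm} and item~\ref{it:pff:int} of Proposition~\ref{prop:pff}, this forces the Bernstein measure $\mu_-$ of $g$ to vanish (so $g$ is completely monotone on $(0, \infty)$) and the parameters of $h$ to satisfy $a = 0$, $b = 0$, and $\alpha_k > 0$ for all $k$. Finally, if $h$ involved $N \ne m$ exponential factors, the direct implication (b) $\Rightarrow$ (a) would make $\tilde f = g * h$ whale-shaped of order $N$, contradicting the hypothesis; hence $N = m$, as required.

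The principal obstacle is verifying that $\tilde f$ is weakly bell-shaped, i.e.\ that $F_t := \tilde f * G_t$ is strictly bell-shaped for every $t > 0$. The upper bound that $F_t^{(n)}$ has at most $n$ sign changes follows from the variation-diminishing property of the Gaussian $G_t$ (item~\ref{it:pff:kernel} of Proposition~\ref{prop:pff}), once one tracks the distributional behaviour of $\tilde f^{(n)}$ at the origin for $n \geq m$. For the lower bound, I would study the integral $F_t^{(n)}(x) = \int_0^\infty G_t^{(n)}(x - y) f(y)\, dy$: since $G_t^{(n)}$ is a Hermite polynomial times a Gaussian, one can determine the sign of $F_t^{(n)}$ near $\pm\infty$, and the $\min(n, m)$ sign changes of $f^{(n)}$ on $(0, \infty)$ then force at least $n$ sign changes of $F_t^{(n)}$ on $\R$ by a Rolle-type interlacing argument, with the additional $n - m$ sign changes (for $n > m$) arising from the smoothing of the one-sided support of $\tilde f$ by $G_t$. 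Combining both bounds gives exactly $n$ sign changes, which establishes that $\tilde f$ is weakly bell-shaped.
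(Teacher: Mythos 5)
Your proposal for (b) $\Rightarrow$ (a) is essentially the standard induction-plus-Rolle argument that the paper delegates to a citation of~\cite{simon}; that part is fine. For (a) $\Rightarrow$ (b), however, you take a genuinely different route from the paper, and it contains a gap at the crucial step.

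The paper's proof of (a) $\Rightarrow$ (b) is a direct adaptation of the Post-inversion machinery of Section~\ref{sec:bs2}: one defines $g_n(x) = \tfrac{(-1)^n n^{n+1}}{n!}\, x^{n-m} f^{(n)}(nx) \prod_{k=1}^m (x - \alpha_{n,k})$ (note the extra factor $x^{n-m}$ absorbing the boundary), checks via integration by parts and the decay estimates~\eqref{eq:whale:inf}--\eqref{eq:whale:zero} that the polynomial part drops out, and thereby exhibits $\laplace f(i\xi)$ as the limit of a product of exactly $m$ one-sided exponential factors times the Cauchy--Stieltjes transform of a non-negative function supported on $(0,\infty)$. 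The factorisation of condition~\ref{thm:whale:b} is then read off directly, and ``whale-shaped $\Rightarrow$ weakly bell-shaped'' is a \emph{corollary}, not an intermediate step.

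Your plan inverts this logic: you first want to show that the zero-extension $\tilde f$ is weakly bell-shaped, then invoke Corollary~\ref{cor:bell} to factor $\tilde f = g * h$, and then use the support restriction plus the direct implication to pin down $h$. The support analysis and the ``$N=m$ by contradiction'' step are sound, but the claim that $\tilde f$ is weakly bell-shaped is precisely where the difficulty has been relocated and where your sketch does not hold up. For $n > m$ the distributional derivative $\tilde f^{(n)}$ is \emph{not} a bounded Borel function: it contains terms $f^{(k)}(0^+)\,\delta_0^{(n-1-k)}$ for $m \le k \le n-1$, and already at $n=m$ the density $f^{(m)}$ may be unbounded near $0$ (the paper only proves integrability of $x^{n-m}f^{(n)}$ near $0$). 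The variation-diminishing property in item~\ref{it:pff:kernel} of Proposition~\ref{prop:pff} applies to bounded Borel functions, so it does not yield the upper bound on sign changes of $F_t^{(n)} = \tilde f^{(n)} * G_t$ for $n\ge m$; ``tracking the distributional behaviour at the origin'' is exactly the missing argument, and making it rigorous (a variation-diminishing principle for distributions supported partly at a point, together with a correct sign-change count for such distributions) is not a routine matter. Your ``lower bound'' discussion is also misdirected: once the upper bound and the decay of $F_t^{(n)}$ at $\pm\infty$ are in hand, the fact that $F_t^{(n)}$ changes sign \emph{at least} $n$ times follows automatically by induction and Rolle, with no need for a separate interlacing analysis of the Hermite--Gaussian integrand. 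In short, the hard content of (a) $\Rightarrow$ (b) is concentrated entirely in the assertion that $\tilde f$ is weakly bell-shaped, and you have not established it; the paper avoids this by constructing the factorisation directly rather than detouring through Corollary~\ref{cor:bell}.
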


\subsection{Discussion}
\label{sec:intro:dis}

The proof of Theorem~\ref{thm:bell2}, at least when integrable bell-shaped functions $f$ are considered, is surprisingly elementary. It combines an application of Post's formula for the inverse Laplace transform of the Cauchy--Stieltjes transform of $f$ with some ideas developed by Hirschman in~\cite{hirschman}. Our proof of Theorem~\ref{thm:bell2} has its roots in Proposition~10 in~\cite{hsw}, which asserts that whale-shaped functions have the representation similar to that in Theorem~\ref{thm:bell}, with $\ph$ taking values in $[0, 2]$.

The structure of the remaining part of this article is as follows. For clarity, in Section~\ref{sec:bs1} we give a simplified proof of Theorem~\ref{thm:bell2} for integrable bell-shaped functions. Section~\ref{sec:bs2} contains the full proof in the general case. Additional results on bell-shaped functions are discussed in Section~\ref{sec:bs3}, while Section~\ref{sec:ws} briefly sketches the proof of Theorem~\ref{thm:whale} on whale-shaped functions.

Finally, we discuss briefly the notation used throughout the article. As usual, we say that a sequence of Borel measures $\mu_n$ on $\R$ (or a similar space) converges weakly to a measure $\mu$ if $\int_\R f(x) \mu_n(dx)$ converges to $\int_\R f(x) \mu(dx)$ for every bounded and continuous $f$. Similarly, the sequence $\mu$ converges vaguely to $\mu$ if $\int_\R f(x) \mu_n(dx)$ converges to $\int_\R f(x) \mu(dx)$ for all continuous $f$ with compact support. We denote by $\laplace f(\xi) = \int_{-\infty}^\infty e^{-\xi x} f(x) dx$ the two-sided Laplace transform of $f$ (here $\xi \in \C$), and we re-use this notation for the Fourier transform of $f$, $\laplace f(i \xi)$ (here $\xi \in \R$). The Laplace and Fourier transforms of a measure $\mu$ are defined in a similar way. These definitions are extended to the case when $f$ or $\mu$ is non-integrable at $\pm \infty$, provided that the integrals in the definitions of $\laplace f$ or $\laplace \mu$ are well-defined as improper integrals.

%
%

\section{Bell-shaped functions, integrable case}
\label{sec:bs1}

In this section we sketch the proof of Theorem~\ref{thm:bell2} for \emph{integrable} bell-shaped functions. In this case the argument avoids certain technicalities, and the idea of the proof is easier to follow. To further facilitate reading of this section, we postpone the discussion of some additional details to the next section, where the general case is studied.

For reader's convenience, before we prove Theorem~\ref{thm:bell2}, we first re-phrase Theorem~\ref{thm:bell} for integrable functions. Suppose that $f$ is a weakly bell-shaped function described by Theorem~\ref{thm:bell}; by saying this, we mean that the Fourier transform of $f$ is given by~\eqref{eq:bell} for $\xi \in \R \setminus \{0\}$, and conditions~\ref{thm:bell:a}, \ref{thm:bell:b} and~\ref{thm:bell:c} of the theorem are satisfied. It follows that the continuous version of the complex logarithm of $\laplace f(i \xi)$ is given by
\formula[eq:bell:log]{
 \log \laplace f(i \xi) & = -a \xi^2 - i b \xi + c + \int_{-\infty}^\infty \biggl( \frac{1}{i \xi + s} - \biggl(\frac{1}{s} - \frac{i \xi}{s^2} \biggr) \ind_{\R \setminus (-1, 1)}(s) \biggr) \ph(s) ds .
}
It is convenient to write down the expressions for the real and imaginary parts of the above expression:
\formula[]{
\label{eq:bell:rlog}
 \re \log \laplace f(i \xi) & = -a \xi^2 + c + \int_{-\infty}^\infty \biggl( \frac{s}{\xi^2 + s^2} - \frac{1}{s} \, \ind_{\R \setminus (-1, 1)}(s) \biggr) \ph(s) ds , \\
\label{eq:bell:ilog}
 \im \log \laplace f(i \xi) & = -b \xi - \xi \int_{-\infty}^\infty \biggl( \frac{1}{\xi^2 + s^2} - \frac{1}{s^2} \, \ind_{\R \setminus (-1, 1)}(s) \biggr) \ph(s) ds .
}
Note that $\re \log \laplace f(i \xi)$ is simply equal to $\log |\laplace f(i \xi)|$, while $\im \log \laplace f(i \xi)$ is the continuous version of the complex argument of $\laplace f(i \xi)$.

If $f$ is integrable, then $\re \log \laplace f(i \xi)$ has a finite limit as $\xi \to 0$. The integral over $\R \setminus (-1, 1)$ in~\eqref{eq:bell:rlog} converges as $\xi \to 0$ to a finite limit by the dominated convergence theorem. On the other hand, the integral over $(-1, 1)$ has a (finite or infinite) limit $\int_{-1}^1 (\ph(s) / s) ds$ by the monotone convergence theorem. However, the left-hand side of~\eqref{eq:bell:rlog} has a finite limit, and so $\ph(s) / s$ is necessarily integrable near $s = 0$. Conversely, if $\ph(s) / s$ in integrable near zero, one can pass to a finite limit as $\xi \to 0$ in~\eqref{eq:bell}, which implies that $\laplace f(0)$ is finite, that is, $f$ is integrable. After simple rearrangement, we immediately find the following version of Theorems~\ref{thm:bell}.

\begin{corollary}
\label{cor:bell1}
Suppose that $a \ge 0$, $b, c \in \R$ and $\ph : \R \to \R$ is a Borel function with the following properties:
\begin{enumerate}[label=\textnormal{(\alph*)}]
\item\label{it:bell1:a} for every $k \in \Z$ the function $\ph(s) - k$ changes its sign at most once, and for $k = 0$ this change takes place at $s = 0$: we have $\ph(s) \ge 0$ for $s > 0$ and $\ph(s) \le 0$ for $s < 0$;
\item\label{it:bell1:b} we have
\formula{
 \int_{-\infty}^\infty \min(s^{-2}, s^{-4}) s \ph(s) ds < \infty .
}
\end{enumerate}
Then there is a weakly bell-shaped function $f$ such that for $\xi \in \R \setminus \{0\}$ the Fourier transform of $f$ satisfies
\formula[eq:bell1]{
 \laplace f(i \xi) & = \exp\biggl(-a \xi^2 - i b \xi - c - \int_{-\infty}^\infty \biggl(\frac{i \xi}{i \xi + s} - \frac{i \xi}{s} \, \ind_{\R \setminus (-1, 1)}(s) \biggr) \frac{\ph(s)}{s} \, ds \biggr) .
}
\end{corollary}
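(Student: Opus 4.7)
The plan is to derive Corollary~\ref{cor:bell1} directly from Theorem~\ref{thm:bell} by showing that, in the integrable setting, the two conditions and the two formulas are equivalent up to reshuffling a constant. I would organise the argument around three short steps: (i)~unpacking the integrability condition~\ref{it:bell1:b}; (ii)~an algebraic identity rewriting the exponent; (iii)~verifying the regularity condition~\ref{thm:bell:c} comes for free.

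For step (i), since $\ph(s)$ and $s$ have the same sign, $s\ph(s)\ge 0$ everywhere. For $|s|<1$ one has $\min(s^{-2},s^{-4})=s^{-2}$, so the integral in~\ref{it:bell1:b} contributes $\int_{-1}^{1}\ph(s)/s\,ds$, while for $|s|\ge 1$ one has $\min(s^{-2},s^{-4})=s^{-4}$, giving $\int_{|s|\ge 1}|\ph(s)|/|s|^{3}\,ds$, which is exactly condition~\ref{thm:bell:b} of Theorem~\ref{thm:bell}. Thus~\ref{it:bell1:b} is equivalent to~\ref{thm:bell:b} together with integrability of $\ph(s)/s$ on $(-1,1)$.

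For step (ii), I would use the elementary identity $\frac{1}{i\xi+s}=\frac{1}{s}-\frac{1}{s}\cdot\frac{i\xi}{i\xi+s}$, valid for $s\neq 0$. Multiplying by $\ph(s)$, splitting the integration at $|s|=1$ and combining with the $\R\setminus(-1,1)$ corrector terms yields
\formula{
 \int_{-\infty}^{\infty}\!\biggl(\frac{1}{i\xi+s}-\Bigl(\frac{1}{s}-\frac{i\xi}{s^{2}}\Bigr)\ind_{\R\setminus(-1,1)}(s)\biggr)\ph(s)\,ds = \int_{-1}^{1}\frac{\ph(s)}{s}\,ds-\int_{-\infty}^{\infty}\!\biggl(\frac{i\xi}{i\xi+s}-\frac{i\xi}{s}\ind_{\R\setminus(-1,1)}(s)\biggr)\frac{\ph(s)}{s}\,ds .
}
Both terms on the right are absolutely convergent by step (i). Substituting this into~\eqref{eq:bell} shows that the exponent in~\eqref{eq:bell} with parameters $(a,b,c)$ equals the exponent in~\eqref{eq:bell1} with parameters $(a,b,\tilde{c})$ for $\tilde{c}=-c-\int_{-1}^{1}\ph(s)/s\,ds$. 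Hence, given $(a,b,c,\ph)$ as in Corollary~\ref{cor:bell1}, setting $c_{\mathrm{Thm}}:=-c-\int_{-1}^{1}\ph(s)/s\,ds$ produces a quadruple $(a,b,c_{\mathrm{Thm}},\ph)$ satisfying the level-crossing condition~\ref{thm:bell:a} and the integrability condition~\ref{thm:bell:b} of Theorem~\ref{thm:bell}, and for which the resulting $\Phi(\xi)$ agrees with the right-hand side of~\eqref{eq:bell1}.

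For step (iii), observe that with the rewritten exponent, the integrand $(i\xi/(i\xi+s)-i\xi s^{-1}\ind_{\R\setminus(-1,1)})\ph(s)/s$ tends pointwise to $0$ as $\xi\to 0$ and is dominated by a multiple of $(1+s^{2})^{-1}|\ph(s)|/|s|\cdot(1\wedge s^{-2})$, which is integrable by step (i); so $\log\Phi(\xi)$ has a finite limit at $0$, hence $\Phi$ extends continuously to $\R$ with $\Phi(0)=e^{-c}$. In particular $\re\Phi$ is bounded on $(-1,1)$ and $\xi\im\Phi(\xi)\to 0$, establishing~\ref{thm:bell:c}. The main (and only) subtlety I expect is the careful bookkeeping of the constant between $c$ and $\tilde{c}$, together with the domination argument needed to pass to the limit as $\xi\to 0$; everything else is a direct appeal to Theorem~\ref{thm:bell}.
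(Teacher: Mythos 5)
Your argument is correct and follows the same route the paper takes: the identity $\frac{1}{i\xi+s}=\frac{1}{s}-\frac{i\xi}{s(i\xi+s)}$ absorbs the divergent piece $\int_{-1}^1\ph(s)/s\,ds$ into the constant, condition~\ref{it:bell1:b} is exactly condition~\ref{thm:bell:b} together with integrability of $\ph(s)/s$ near $0$ (using $s\ph(s)\ge 0$), and the constants are related by $c_{\mathrm{Thm}}=-c-\int_{-1}^1\ph(s)/s\,ds$, which matches the remark the paper makes right after the corollary. You also do a bit more than the paper's informal discussion by explicitly checking the regularity condition~\ref{thm:bell:c} via dominated convergence on the rewritten exponent, which is a genuine small gap that the paper glosses over with ``after simple rearrangement''; your argument there is sound (the kernel is bounded by $|\ph(s)/s|$ for $|s|<1$ and by $|\ph(s)|/|s|^3$ for $|s|\ge 1$ when $|\xi|\le 1$, and tends to $0$ pointwise), even if the precise dominating function you wrote is a slightly odd combination.
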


Note that while parameters $a$, $b$ and $\ph$ are the same in Theorem~\ref{thm:bell} and Corollary~\ref{cor:bell1}, the parameter $c$ may change. More precisely, the parameter $c$ in Corollary~\ref{cor:bell1} is equal to $-c - \int_{-1}^1 (\ph(s)/s) ds$ with the notation of Theorem~\ref{thm:bell}.

We remark that $f$ has integral at most one if and only if $\laplace f(0) = e^{-c} \le 1$, that is, $c \ge 0$ in Corollary~\ref{cor:bell1}. In this case, with the terminology of~\cite{rogers}, equation~\eqref{eq:bell1} means that $\laplace f(i \xi) = \exp(-\Phi(\xi))$, where $\Phi(\xi)$ is a \emph{Rogers function}: a holomorphic function in the right complex half-plane such that $\re(\Phi(\xi) / \xi) \ge 0$ for every $\xi$ in the right complex half-plane; see Theorem~3.3 in~\cite{rogers}, and Section~3 in~\cite{rogers} for a detailed discussion.

In this section we prove that every integrable weakly bell-shaped extended function is described by Corollary~\ref{cor:bell1}. We begin with three auxiliary lemmas. The first one contains a completely elementary property of bell-shaped functions; the second one is a statement about convergence of Fourier transforms of integrable bell-shaped functions, similar to Remark~3.16 in~\cite{bell}; the last one contains the key idea of the proof, based on Post's real inversion formula for the Laplace transform.

\begin{lemma}
\label{lem:fn:lim}
If $f$ is strictly bell-shaped, then
\formula[eq:fn:lim]{
 \lim_{x \to \pm \infty} (x^n f^{(n)}(x)) & = 0
}
for $n = 0, 1, 2, \ldots$\, If in addition $f$ is integrable, then in fact
\formula[eq:fn:lim:int]{
 \lim_{x \to \pm \infty} (x^{n + 1} f^{(n)}(x)) & = 0 .
}
\end{lemma}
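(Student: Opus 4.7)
The plan is to argue by induction on $n$, combining the sign pattern of the derivatives $f^{(n)}$ at infinity (forced by strict bell-shapedness) with an iterated-integration identity of Taylor type that compares $|f^{(n)}(2x)| x^n$ to $f(x)$.

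As a preliminary step I would establish that for each $n \ge 0$ and as $x \to +\infty$ (the case $x \to -\infty$ being symmetric): (i) $\sign f^{(n)}(x) = (-1)^n$ for $x$ large enough, (ii) $|f^{(n)}|$ is eventually non-increasing, and (iii) $f^{(n)}(x) \to 0$. These three assertions are obtained by a joint induction. Starting from $f \ge 0$ and $f \to 0$, the single sign change of $f'$ forces $f' \le 0$ to the right of its zero, which is (i) for $n = 1$. Property (ii) follows from (i) because $f^{(n)}$ and $f^{(n+1)}$ then have opposite signs at infinity, so $(f^{(n)})^2$ is eventually non-increasing. Property (iii) follows because an eventually monotone function of constant sign whose integral recovers the lower-order derivative (via the fundamental theorem of calculus) cannot have a nonzero limit without contradicting the inductive decay of $f^{(n-1)}$. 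Finally, (i) for the next index $n+1$ is read off from (iii) combined with the sign-change count for $f^{(n+1)}$.

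For the main estimate, fix $x$ larger than the rightmost zero of any of $f, f', \dots, f^{(n)}$. Iterating the identity $f^{(k-1)}(x) = -\int_x^\infty f^{(k)}(y) dy$ (valid by (iii)) and applying Fubini (justified by the constant sign provided by (i)) yields the Taylor-type formula
\formula{
 f(x) & = \frac{(-1)^n}{(n-1)!} \int_x^\infty (y-x)^{n-1} f^{(n)}(y) dy .
}
Because $\sign f^{(n)}(y) = (-1)^n$ on $[x, \infty)$, the integrand is non-negative; and because $|f^{(n)}|$ is non-increasing there, restricting to $[x, 2x]$ gives the lower bound
\formula{
 f(x) & \ge \frac{|f^{(n)}(2x)|}{(n-1)!} \int_x^{2x} (y-x)^{n-1} dy = \frac{x^n |f^{(n)}(2x)|}{n!} .
}
Since $f(x) \to 0$, we deduce $(2x)^n |f^{(n)}(2x)| \to 0$, which after a trivial substitution is \eqref{eq:fn:lim}.

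For the integrable refinement \eqref{eq:fn:lim:int}, the extra input is the case $n = 0$: since $f$ is eventually non-increasing and non-negative with $f \to 0$, integrability yields $(x/2) f(x) \le \int_{x/2}^x f(y) dy \to 0$, hence $x f(x) \to 0$. Multiplying the lower bound above by $x$ then gives $x^{n+1} |f^{(n)}(2x)| \le n! \cdot x f(x) \to 0$, yielding \eqref{eq:fn:lim:int} after rescaling. The main obstacle is the preliminary joint induction (i)--(iii) over the sign, monotonicity, and decay of all derivatives simultaneously; once this bookkeeping is in place, the quantitative part is a single monotone integral estimate.
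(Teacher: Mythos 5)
Your argument is correct. It differs from the paper's proof in the organisation of the key estimate. The paper proves~\eqref{eq:fn:lim} by a one-step induction: since $f^{(n+2)}$ has constant non-negative sign near $-\infty$, $f^{(n+1)}$ is monotone there, whence
\formula{
 0 \le x f^{(n+1)}(-x) \le 2 \int_{-x}^{-x/2} f^{(n+1)}(y)\,dy \le 2 f^{(n)}(-\tfrac{x}{2}) ,
}
so $x^{n+1} f^{(n+1)}(-x) \le 2 x^n f^{(n)}(-\tfrac{x}{2}) \to 0$; the integrable refinement is obtained by proving the $n = 0$ case $x f(x) \to 0$ separately and running the same induction. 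You instead front-load the sign, monotonicity and decay bookkeeping for all derivatives (your items~(i)--(iii), which the paper uses but leaves implicit), and then deduce the estimate for every $n$ at once from the Taylor-remainder identity
\formula{
 f(x) = \frac{(-1)^n}{(n-1)!} \int_x^\infty (y - x)^{n-1} f^{(n)}(y)\,dy
}
restricted to $[x, 2x]$, which yields the explicit bound $x^n |f^{(n)}(2x)| \le n!\, f(x)$. This is essentially the $n$-fold composition of the paper's one-step inequality, carried out in a single integral; it produces a cleaner quantitative conclusion at the cost of establishing the joint sign/decay structure up front. Both proofs are elementary and correct, and the integrable refinement is handled the same way in each. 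One small point worth making explicit when writing this up: the improper integrals in the Taylor identity converge because the integrands have constant sign on $[x, \infty)$ and the iterated integrals collapse to the finite quantity $f(x)$, which is exactly the Tonelli justification you allude to.
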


\begin{proof}
Suppose that $f$ is strictly bell-shaped. We prove~\eqref{eq:fn:lim} by induction. The result for $n = 0$ is a part of the definition of a strictly bell-shaped function. Suppose now that~\eqref{eq:fn:lim} holds for some $n$; we will show that $x^{n + 1} f^{(n + 1)}(x)$ tends to zero as $x \to \pm \infty$.

Since $f$ is strictly bell-shaped, $f^{(n+2)}$ is non-negative in some neighbourhood of $-\infty$, and hence $f^{(n+1)}$ is non-decreasing in some neighbourhood of $-\infty$. It follows that if $x > 0$ is large enough, then 
\formula{
 2 f^{(n)}(-\tfrac{x}{2}) & = 2 \int_{-\infty}^{-x/2} f^{(n+1)}(y) dy \ge 2 \int_{-x}^{-x/2} f^{(n+1)}(y) dy \ge x f^{(n+1)}(-x) .
}
We conclude that $0 \le x^{n + 1} f^{(n+1)}(-x) \le 2 x^n f^{(n)}(-\tfrac{x}{2})$ for $x > 0$ sufficiently large. In a similar way, $0 \le (-x)^{n + 1} f^{(n+1)}(x) \le 2 (-x)^n f^{(n)}(\tfrac{x}{2})$ for $x > 0$ large enough, and consequently $x^{n + 1} f^{(n + 1)}(x)$ converges to zero as $x \to \pm \infty$, as desired.

If $f$ is an integrable strictly bell-shaped function, then in fact $x f(x)$ converges to zero as $x \to \pm \infty$. Indeed: $f$ is non-decreasing in some neighbourhood of $-\infty$, and hence for $x > 0$ large enough we have
\formula{
 0 \le x f(-x) & \le 2 \int_{-x}^{-x/2} f(y) dy .
}
As $x \to \infty$, the right-hand side converges to zero by the dominated convergence theorem, and hence $x f(-x)$ converges to zero as $x \to \infty$. Similarly, the limit of $x f(x)$ as $x \to \infty$ is zero, and~\eqref{eq:fn:lim:int} for $n = 0$ follows. The general case $n = 0, 1, 2, \ldots$ follows now by the induction argument used in the proof of~\eqref{eq:fn:lim}.
\end{proof}

\begin{lemma}
\label{lem:limit}
Suppose that $f_n$ is a sequence of integrable weakly bell-shaped extended functions described by Corollary~\ref{cor:bell1} with integral at most one, which correspond to parameters $a_n$, $b_n$, $c_n$ and $\ph_n$ in representation~\eqref{eq:bell1} of $\laplace f_n(i \xi)$. Suppose that for every $\xi \in \R$, $\laplace f_n(i \xi)$ converges to a finite, non-zero limit $\Phi(\xi)$. Then $\Phi(\xi)$ is the Fourier transform of a weakly bell-shaped extended function $f$ with integral at most one, and $\Phi(\xi) = \laplace f(i \xi)$ is given by~\eqref{eq:bell1} for some parameters $a$, $b$, $c$ and $\ph$. More precisely, $b$ is the limit of $b_n$, $\ph(s) ds$ is the vague limit of $\ph_n(s) ds$, and $a$ and $c$ are decribed by the following properties: $a \delta_0(ds) + s \ph(1/s) ds$ is the vague limit of $a_n \delta_0(ds) + s \ph_n(1/s) ds$, while $c \delta_0(ds) + s^{-3} \ph(s) ds$ is the vague limit of $c_n \delta_0(ds) + s^{-3} \ph_n(s) ds$.
\end{lemma}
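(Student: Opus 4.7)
The plan is to show that the parameter sequences $(a_n, b_n, c_n, \ph_n)$ are relatively compact in appropriate topologies, extract subsequential limits, verify via Corollary~\ref{cor:bell1} that these parameterise a weakly bell-shaped $f$ with $\laplace f = \Phi$, and then invoke uniqueness of the representation~\eqref{eq:bell1} to upgrade to full convergence. The parameters $a$ and $c$ are not simply the limits of $a_n$ and $c_n$: rather, they are the atoms at $0$ of the two auxiliary measures, which absorb any mass of $\ph_n$ that escapes respectively to $|s| = \infty$ (through the substitution $u = 1/s$) and to $s = 0$.

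\emph{Boundedness.} The key identity, obtained by taking the real part of the exponent in~\eqref{eq:bell1}, is
\begin{align*}
 -\log|\laplace f_n(i\xi)| \,=\, a_n \xi^2 + c_n + \int_{-\infty}^\infty \frac{\xi^2}{s^2 + \xi^2}\,\frac{\ph_n(s)}{s}\,ds .
\end{align*}
Every term on the right is non-negative: $c_n \ge 0$ because $\int f_n \le 1$, and $\ph_n(s)/s \ge 0$ by the level-crossing condition. The left-hand side converges to $-\log|\Phi(\xi)|$, which is finite since $\Phi(\xi) \ne 0$. Evaluating at $\xi = 1$ therefore bounds $a_n$, $c_n$ and $\int\ph_n(s)/(s(1+s^2))\,ds$ uniformly in $n$; splitting the last integral at $|s|=1$ bounds $\int_{|s|<1}\ph_n(s)/s\,ds$ and $\int_{|s|>1}\ph_n(s)/s^3\,ds$. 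Since the level-crossing condition forces each $\ph_n$ to be locally bounded on $\R \setminus \{0\}$ in a uniform sense, this yields tightness of all three measures in the statement.

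\emph{Extraction and identification.} By Helly's selection theorem together with a diagonal argument tracking the level-crossing points $s_{n,k}$ of each function $\ph_n - k$, extract a subsequence along which $\ph_n(s)\,ds$ converges vaguely on $\R \setminus \{0\}$ to $\ph(s)\,ds$ for some $\ph$ satisfying the level-crossing condition of Corollary~\ref{cor:bell1}, and each of the three measures in the statement converges vaguely on $\R$. Define $a$ and $c$ as the atoms at $0$ of the vague limits of $a_n\delta_0 + s\ph_n(1/s)\,ds$ and $c_n\delta_0 + s^{-3}\ph_n(s)\,ds$, respectively. Dominated convergence applied to~\eqref{eq:bell1}, with majorants of the form $C\xi^2\ph_n(s)/(s(s^2+\xi^2))$ on $|s|\le 1$ and $C|\xi|\ph_n(s)/s^3$ on $|s|>1$ (both controlled by Step~1), shows that the exponent of $\laplace f_n(i\xi)$ minus the linear term $-ib_n\xi$ converges pointwise in $\xi$ to the analogous expression in $(a, c, \ph)$. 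Since $\laplace f_n(i\xi) \to \Phi(\xi) \ne 0$, the remaining term $-ib_n\xi$ must converge for every $\xi$, forcing $b_n \to b$ for some $b \in \R$; then Corollary~\ref{cor:bell1} produces a weakly bell-shaped extended function $f$ with $\laplace f = \Phi$.

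\emph{Uniqueness and main obstacle.} By the second remark following Theorem~\ref{thm:bell}, the parameters $(a, b, c, \ph)$ of a bell-shaped function in the representation~\eqref{eq:bell1} are uniquely determined by the Fourier transform, so the subsequence extracted above is arbitrary and the full sequence converges in all four senses of the statement. The chief technical difficulty lies in the precise bookkeeping of the atoms at $0$: one has to split the integral in~\eqref{eq:bell1} into pieces corresponding to the three auxiliary measures, verify that mass of $\ph_n$ escaping to $|s| = \infty$ is precisely captured by the atom of $a_n\delta_0 + s\ph_n(1/s)\,ds$ and contributes to $\Phi$ through the $a\xi^2$ term, and similarly that mass escaping to $s = 0$ is captured by the atom of $c_n\delta_0 + s^{-3}\ph_n(s)\,ds$ and contributes through the constant $c$. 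Simultaneously, the level-crossing condition must survive the vague limit, which requires careful tracking of the points $s_{n,k}$ in the compactified line $[0,\infty]$.
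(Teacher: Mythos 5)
Your overall strategy matches the paper's: bound the parameters via the real part of $\log\laplace f_n(i\xi)$, extract a subsequential limit, identify the limit Fourier transform with the form~\eqref{eq:bell1}, and upgrade subsequential convergence to full convergence via uniqueness of the representation. Your single-evaluation bound at $\xi = 1$, exploiting $a_n, c_n \ge 0$ and $\ph_n(s)/s \ge 0$, is a valid and slightly simpler alternative to the difference-quotient estimate used in the paper's Lemma~\ref{lem:limit2} (where $c_n$ carries no sign constraint in the non-integrable parametrisation). Your three auxiliary measures amount to unbundling the paper's single measure $\min\{s^{-2},s^{-4}\}\,s\,\ph_n(s)\,ds + \pi^{-1}c_n\delta_0(ds) + \pi^{-1}a_n\delta_\infty(ds)$ on $\R\cup\{\infty\}$, whose weak convergence the paper draws from Remark~3.16 of~\cite{rogers}.

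The one genuine gap is the step you label ``dominated convergence.'' In~\eqref{eq:bell1} the integrand (in $s$) is fixed while the measure $\ph_n(s)\,ds/s$ changes with $n$, so dominated convergence with $n$-dependent majorants is not the right tool and does not, on its own, account for mass of $\ph_n(s)/s$ concentrating at $s = 0$ (which must become part of $c$) or at $|s| = \infty$ (which must become part of $a$). You correctly identify this as the ``chief technical difficulty'' but leave it unresolved. In the paper it is handled by writing the exponent of~\eqref{eq:bell1}, minus $-ib_n\xi$, as $\int_{\R\cup\{\infty\}} g_\xi(s)\,\mu_n(ds)$ for a fixed bounded function $g_\xi$ which extends continuously to $s = 0$ with $g_\xi(0) = 1$ and to $s = \infty$ with $g_\xi(\infty) = \xi^2$, and a single measure $\mu_n$ packaging the density together with the atoms $c_n\delta_0$ and $a_n\delta_\infty$. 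Weak convergence of $\mu_n$ on the compact space $\R\cup\{\infty\}$ then passes the integral to the limit \emph{and} automatically converts mass escaping to $0$ or $\infty$ into the correct contributions to $c$ and $a\xi^2$. To complete your proposal you would need to carry out precisely this repackaging, rather than appeal to dominated convergence; the three separate measures you introduce are then recovered a posteriori by testing $\mu_n$ against suitable compactly supported functions.
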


\begin{proof}
By Remark~3.16 in~\cite{rogers}, pointwise convergence of $f_n$ implies convergence of $b_n$ to some $b$, and weak convergence of the sequence of measures $\min\{s^{-2}, s^{-4}\} s \ph_n(s) ds + \tfrac{1}{\pi} c_n \delta_0(ds) + \tfrac{1}{\pi} a_n \delta_{\infty}(ds)$ on $\R \cup \{\infty\}$ (the one-point compactification of $\R$). Since a more general result is given in Lemma~\ref{lem:limit2} below, we omit the details (note that the expression used in~\cite{rogers} is slightly different than~\eqref{eq:bell1}).

Due to our assumptions, namely: $\ph_n(s_2) \ge \ph_n(s_1) - 1$ when $s_1 \le s_2$, the limit measure mentioned above is necessarily of the form $\min\{s^{-2}, s^{-4}\} s \ph(s) ds + \tfrac{1}{\pi} c \delta_0(ds) + \tfrac{1}{\pi} a \delta_{\infty}(ds)$, where $\ph(s) ds$ is the vague limit of $\ph_n(s) ds$; again we refer to Lemma~\ref{lem:limit2} for further details. In particular, we have $a \ge 0$, $c \ge 0$, and $\ph$ satisfies the integrability condition~\ref{it:bell1:b} in Corollary~\ref{cor:bell}. Furthermore, after modification on a set of Lebesgue measure zero, $\ph$ satisfies the level-crossing condition~\ref{it:bell1:a} in Corollary~\ref{cor:bell1} (which is preserved by the vague limit). Passing to the limit in the expression~\eqref{eq:bell1} for $\laplace f_n(i \xi)$ (that is, with $a$, $b$, $c$ and $\ph$ replaced by $a_n$, $b_n$, $c_n$ and $\ph_n$), we find that the limit is again given by the right-hand side of~\eqref{eq:bell1}, and thus it is the Fourier transform of a weakly bell-shaped integrable function $f$, as desired. Finally, the expressions for $a$, $c$ and $\ph$ follow from the weak convergence of measures discussed above.
\end{proof}

\begin{lemma}
\label{lem:F:inv}
If $f$ is an integrable strictly bell-shaped function and $\xi \in \R \setminus \{0\}$, then
\formula[eq:F:inv]{
 \laplace f(i \xi) & = \lim_{n \to \infty} \biggl(\frac{n^{n + 1}}{n! \, \xi^n} \int_{-\infty}^\infty \frac{f^{(n)}(n x)}{1 + i \xi x} \, dx\biggr) .
}
\end{lemma}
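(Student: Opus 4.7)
The strategy is a Post-type inversion carried out through $n$ successive integrations by parts. I would start by transferring all $n$ derivatives from $f^{(n)}(nx)$ onto the rational kernel $(1+i\xi x)^{-1}$. Since $\frac{d}{dx}(1+i\xi x)^{-k} = -k i \xi (1+i\xi x)^{-(k+1)}$ and an antiderivative of $f^{(m)}(nx)$ is $\frac{1}{n} f^{(m-1)}(nx)$, each step contributes a factor $\frac{k i \xi}{n}$ and raises the exponent on $(1+i\xi x)^{-1}$ by one. The boundary terms $[f^{(k)}(nx)(1+i\xi x)^{-m}]_{-\infty}^{\infty}$ vanish throughout: by Lemma~\ref{lem:fn:lim} we have $f^{(k)}(nx) \to 0$ at $\pm\infty$, while $|1+i\xi x|^m \to \infty$ there. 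After $n$ iterations this gives the identity
\formula{
 \int_{-\infty}^\infty \frac{f^{(n)}(nx)}{1+i\xi x}\, dx & = \frac{(i\xi)^n \, n!}{n^n} \int_{-\infty}^\infty \frac{f(nx)}{(1+i\xi x)^{n+1}}\, dx .
}

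Substituting $y = nx$ on the right and combining with the prefactor $\frac{n^{n+1}}{n!\, \xi^n}$ (up to a unimodular phase that is absorbed into the normalisation) then reduces the expression under the limit in~\eqref{eq:F:inv} to
\formula{
 \int_{-\infty}^\infty \frac{f(y)}{(1+i\xi y/n)^{n+1}}\, dy .
}
I would conclude by dominated convergence. For fixed $y$, the classical limit $(1+iw/n)^{n+1} \to e^{iw}$ gives $(1+i\xi y/n)^{-(n+1)} \to e^{-i\xi y}$, and the elementary bound $|1+i\xi y/n|^2 = 1 + (\xi y/n)^2 \ge 1$ yields $|(1+i\xi y/n)^{-(n+1)}| \le 1$ for every $n$ and $y$. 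The integrand is thus dominated uniformly by the integrable function $|f(y)|$, and Lebesgue's theorem produces the limit $\int_{-\infty}^\infty f(y) e^{-i\xi y}\, dy = \laplace f(i\xi)$.

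The only genuinely non-trivial ingredient is the decay of the derivatives $f^{(k)}$ supplied by Lemma~\ref{lem:fn:lim}, which is indispensable for the vanishing of the boundary terms at each of the $n$ integration-by-parts steps. Everything else reduces to elementary computation, and I expect no substantial analytic obstacle beyond careful bookkeeping of the factors of $i$, $\xi$ and $n$ accumulated through the chain rule and the rescaling $y = nx$.
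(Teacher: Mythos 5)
Your proof is correct in substance and takes a genuinely different route from the paper's. The paper introduces the auxiliary function $\Phi(s) = \int_0^\infty e^{-s\xi}\laplace f(i\xi)\,d\xi$, shows via Fubini and $n$-fold integration by parts that $\Phi^{(n)}(s) = i^n\int f^{(n)}(x)/(s+ix)\,dx$, and then invokes Post's real inversion formula for the Laplace transform as a known black box. You instead perform the same $n$-fold integration by parts directly on the target integral, rescale, and finish with dominated convergence and the elementary limit $(1+iw/n)^{n+1}\to e^{iw}$, together with the uniform bound $|1+i\xi y/n|\ge 1$. This is more self-contained: it effectively re-derives, in this specific setting, the concentration phenomenon underlying Post's formula instead of citing it. The only ingredient you both share (and the only non-trivial one) is the boundary-term vanishing supplied by Lemma~\ref{lem:fn:lim}.

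One point deserves a cleaner treatment than ``a unimodular phase absorbed into the normalisation.'' Your algebra correctly yields
\formula{
 \frac{n^{n+1}}{n!\,(i\xi)^n}\int_{-\infty}^\infty \frac{f^{(n)}(nx)}{1+i\xi x}\,dx
 &= \int_{-\infty}^\infty \frac{f(y)}{(1+i\xi y/n)^{n+1}}\,dy \ \longrightarrow\ \laplace f(i\xi),
}
that is, with $(i\xi)^n$ in the prefactor, not $\xi^n$. With the prefactor as printed in the lemma statement the expression under the limit equals $i^n$ times the above and has no limit. This is not a gap in your argument but a misprint in the statement: in the paper's own proof (and in its downstream use via~\eqref{eq:F:approx} with $z=i\xi$, and likewise in Lemma~\ref{lem:F:inv2}) the prefactor appears as $n^{n+1}/(n!\,(i\xi)^n)$. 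You spotted the discrepancy; you should state it plainly rather than wave at it, because an $n$-dependent factor $i^n$ cannot be absorbed into any fixed normalisation.
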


\begin{proof}
For $s > 0$ denote
\formula{
 \Phi(s) & = \int_0^\infty e^{-s \xi} \laplace f(i \xi) d\xi .
}
Note that $\laplace f(i \xi)$ is a continuous and bounded function of $\xi > 0$; thus, $\Phi$ is well-defined. By Fubini's theorem, for $s > 0$ we have
\formula{
 \Phi(s) & = \int_{-\infty}^\infty \frac{f(x)}{s + i x} \, dx .
}
Differentiating under the integral, we find that
\formula{
 \Phi^{(n)}(s) & = (-1)^n n! \int_{-\infty}^\infty \frac{f(x)}{(s + i x)^{n + 1}} \, dx ,
}
and $n$-fold integration by parts leads to
\formula{
 \Phi^{(n)}(s) & = i^n \int_{-\infty}^\infty \frac{f^{(n)}(x)}{s + i x} \, dx .
}
By Post's inversion formula for the Laplace transform, for $\xi > 0$ we have
\formula{
 \laplace f(i \xi) & = \lim_{n \to \infty} \biggl(\frac{(-1)^n n^{n + 1}}{n!} \, \frac{\Phi^{(n)}(n / \xi)}{\xi^{n + 1}}\biggr) .
}
It follows that
\formula{
 \laplace f(i \xi) & = \lim_{n \to \infty} \biggl(\frac{(-i)^n n^{n + 1}}{n! \, \xi^{n + 1}} \int_{-\infty}^\infty \frac{f^{(n)}(x)}{n / \xi + i x} \, dx\biggr) .
}
Substituting $x = n y$, we conclude that
\formula{
 \laplace f(i \xi) & = \lim_{n \to \infty} \biggl(\frac{n^{n + 1}}{n! \, (i \xi)^n} \int_{-\infty}^\infty \frac{f^{(n)}(n y)}{1 + i \xi y} \, dy\biggr) ,
}
as desired. Since $\laplace f(-i \xi) = \overline{\laplace f(i \xi)}$, the above equality also holds when $\xi < 0$, and the proof is complete.
\end{proof}

\begin{proof}[Proof of Theorem~\ref{thm:bell2}, integrable case]
We claim that it is sufficient to prove the theorem for strictly bell-shaped functions. Indeed, suppose that every integrable strictly bell-shaped function is described by Corollary~\ref{cor:bell1}. If an integrable extended function $f$ is weakly bell-shaped, then for every $t > 0$, $f * G_t$ is integrable and strictly bell-shaped, and hence it is described by Corollary~\ref{cor:bell1}. Clearly, the corresponding parameters $a_t - t$, $b_t$, $c_t$ and $\ph_t$ do not depend on $t > 0$. By passing to a limit as $t \to 0^+$ in~\eqref{eq:bell1}, we obtain the desired representation~\eqref{eq:bell1} of $\laplace f(i \xi)$.

Suppose that $f$ is integrable and strictly bell-shaped. With no loss of generality we may assume that the integral of $f$ is one. For $n = 0, 1, 2, \ldots$ we denote the zeroes of $f^{(n)}$ by $n \alpha_{n,k}$, where $k = 1, 2, \ldots, n$. More precisely, we assume that
\formula{
 -\infty & = \alpha_{n,0} < \alpha_{n,1} < \alpha_{n,2} < \ldots < \alpha_{n,n} < \alpha_{n,n+1} = \infty
}
and $(-1)^k f^{(n)}(x) \ge 0$ for $x \in (\alpha_{n,k}, \alpha_{n,k+1})$, $k = 0, 1, 2, \ldots, n$. We define
\formula{
 g_n(x) & = \frac{(-1)^n n^{n + 1}}{n!} \, f^{(n)}(n x) \prod_{k = 1}^n (x - \alpha_{n,k})
}
(a very similar function plays a key role in Hirschman's proof of Schoenberg's conjecture in~\cite{hirschman}). Clearly, $g_n(x) \ge 0$ for all $x \in \R$. Integrating by parts $n$ times and using Lemma~\ref{lem:fn:lim}, we find that
\formula[eq:gn:int]{
 \int_{-\infty}^\infty g_n(x) dx & = \int_{-\infty}^\infty n f(n x) dx = \int_{-\infty}^\infty f(y) dy = 1 ,
}
so that $g_n$ is integrable. We now compute (a variant of) the Cauchy--Stieltjes transform of $g_n$. Observe that for a fixed $z \in \C \setminus \R$, we have
\formula{
 \frac{1}{1 + x z} \prod_{k = 1}^n (x - \alpha_{n,k}) & = \frac{(-1)^n }{z^n (1 + x z)} \prod_{k = 1}^n (1 + \alpha_{n,k} z) + P_n(x)
}
for all $x \in \R$, where $P_n$ is a polynomial of degree at most $n - 1$. Furthermore, the same argument that led us to~\eqref{eq:gn:int} implies that
\formula{
 \int_{-\infty}^\infty f^{(n)}(n x) P_n(x) dx & = 0 .
}
Therefore, for $z \in \C \setminus \R$, we have
\formula{
 \int_{-\infty}^\infty \frac{g_n(x)}{1 + x z} \, dx & = \frac{(-1)^n n^{n + 1}}{n!} \int_{-\infty}^\infty \frac{f^{(n)}(n x)}{1 + x z} \biggl(\prod_{k = 1}^n (x - \alpha_{n,k})\biggr) dx \\
 & = \frac{n^{n + 1}}{n! \, z^n} \biggl(\prod_{k = 1}^n (1 + \alpha_{n,k} z)\biggr) \int_{-\infty}^\infty \frac{f^{(n)}(n x)}{1 + x z} \, dx .
}
Equivalently,
\formula[eq:F:approx]{
 \frac{n^{n + 1}}{n! \, z^n} \int_{-\infty}^\infty \frac{f^{(n)}(n x)}{1 + x z} \, dx & = \biggl(\prod_{k = 1}^n \frac{1}{1 + \alpha_{n,k} z}\biggr) \int_{-\infty}^\infty \frac{g_n(x)}{1 + x z} \, dx .
}
By Lemma~\ref{lem:F:inv}, for $z = i \xi$ with $\xi \in \R \setminus \{0\}$, the left-hand side of~\eqref{eq:F:approx} converges as $n \to \infty$ to $\laplace f(i \xi)$. Let us inspect the right-hand side.

The factor $\prod_{k = 1}^n (1 + i \alpha_{n,k} \xi)^{-1}$, if not identically equal to one, is the Fourier transform of a Pólya frequency function; see~\eqref{eq:pff}, or Proposition~5.3 in~\cite{bell}. The other factor in the right-hand side of~\eqref{eq:F:approx} is the Fourier transform of an integrable $\amcm$ function, with integral equal to one. Indeed, by Fubini's theorem, for $\xi \in \R \setminus \{0\}$ we have
\formula{
 \int_{-\infty}^\infty \frac{g_n(x)}{1 + i \xi x} \, dx & = \int_0^\infty \frac{g_n(-x)}{x (1/x - i \xi)} \, dx + \int_0^\infty \frac{g_n(x)}{x (1/x + i \xi)} \, dx \\
 & = \int_{-\infty}^0 \biggl(\int_0^\infty \frac{g_n(-x) e^{s/x}}{x} \, dx \biggr) e^{-i \xi s} ds + \int_0^\infty \biggl(\int_0^\infty \frac{g_n(x) e^{-s/x}}{x} \, dx \biggr) e^{-i \xi s} ds .
}
The first parenthesised expression in the right-hand side is an absolutely monotone function of $x \in (-\infty, 0)$, while the other one is a completely monotone function of $x \in (0, \infty)$ (this argument is very similar to the derivation of~\eqref{eq:amcm:int}; see also Proposition~5.1 in~\cite{bell}). Furthermore, the integral of the $\amcm$ function in question is equal to the value of its Fourier transform at zero. This value is given by the integral of $g_n$, which, as we already know, is equal to one.

Let us summarise our findings. We have proved that for $\xi \in \R \setminus \{0\}$,
\formula[eq:F]{
 \laplace f(i \xi) & = \lim_{n \to \infty} \biggl(\prod_{k = 1}^n \frac{1}{1 + i \alpha_{n,k} \xi}\biggr) \int_{-\infty}^\infty \frac{g_n(x)}{1 + i \xi x} \, dx ,
}
and that for every $n = 0, 1, 2, \ldots$ the expression under the limit in the right-hand side is the product of Fourier transforms of an integrable $\amcm$ function and a Pólya frequency function. As explained in the introduction (see Corollary~\ref{cor:bell}), this means that the expressions under the limit in~\eqref{eq:F} are given by the right-hand side of~\eqref{eq:bell1} for appropriate parameters $a_n$, $b_n$, $c_n$ and $\ph_n$, accompanied by conditions~\ref{it:bell1:a} and \ref{it:bell1:b} in Corollary~\ref{cor:bell1}. By Lemma~\ref{lem:limit}, also the Fourier transform of $f$ is given by~\eqref{eq:bell1}, as desired.
\end{proof}

%
%

\section{Bell-shaped functions, non-integrable case}
\label{sec:bs2}

Here we provide a complete proof of Theorem~\ref{thm:bell2} in the general case, without additional integrability condition. Compared to Section~\ref{sec:bs1}, there are two essential difficulties, and the proofs of Lemmas~\ref{lem:limit} and~\ref{lem:F:inv} need to be modified appropriately.

If $f$ is not integrable, the Fourier transform of $f$ can be defined in a number of equivalent ways. We follow the most elementary definition in terms of an improper integral, already mentioned in Theorem~\ref{thm:bell}. We always assume that an extended function $f$ is locally integrable, converges to zero at $\pm \infty$, and it is monotone near $-\infty$ and $\infty$. We note that if $f$ is monotone on $(-\infty, -p]$ and on $[p, \infty)$, then we can re-write the expression for the Fourier transform of $f$ in terms of absolutely convergent improper Riemann--Stieltjes integrals:
\formula[eq:lf]{
\begin{aligned}
 \laplace f(i \xi) & = \int_{-\infty}^{-p} e^{-i \xi x} f(x) dx + \int_{-p}^p e^{-i \xi x} f(x) dx + \int_p^\infty e^{-i \xi x} f(x) dx \\
 & = \biggl(-\frac{e^{i \xi p} f(-p)}{i \xi} + \int_{-\infty}^{-p} \frac{e^{-i \xi x}}{i \xi} \, df(x) \biggr) + \int_{-p}^p e^{-i \xi x} f(x) dx \\
 & \hspace*{14em} + \biggl( \frac{e^{-i \xi p} f(p)}{i \xi} + \int_p^\infty \frac{e^{-i \xi x}}{i \xi} \, df(x) \biggr)
\end{aligned}
}
(we integrated by parts in the integrals over $(-\infty, -p)$ and $(p, \infty)$). If $f$ is strictly bell-shaped, then $f'$, $f''$ etc.\@ are absolutely integrable, and we may simplify the above expression to an absolutely convergent integral
\formula[eq:lf1]{
 \laplace f(i \xi) & = \int_{-\infty}^\infty \frac{e^{-i \xi x}}{i \xi} \, f'(x) dx = \int_{-\infty}^\infty \frac{e^{-i \xi x}}{(i \xi)^2} \, f''(x) dx = \ldots
}
Clearly, in the above formula we may replace $e^{-i \xi x}$ by $e^{-i \xi x} - 1$.

We first extend Lemma~\ref{lem:limit}, by showing that the class of functions given by~\eqref{eq:bell} is closed under pointwise limits.

\begin{lemma}
\label{lem:limit2}
Suppose that $f_n$ is a sequence of weakly bell-shaped extended functions described by Theorem~\ref{thm:bell}, which correspond to parameters $a_n$, $b_n$, $c_n$ and $\ph_n$ in representation~\eqref{eq:bell} of $\laplace f_n(i \xi)$, the Fourier transform of $f_n$. Suppose that for every $\xi \in i \R \setminus \{0\}$, $\laplace f_n(i \xi)$ converges to a finite limit $\Phi(\xi)$, and that $\Phi$ is not identically zero. Then $\Phi(\xi)$ is given by~\eqref{eq:bell} for some $a$, $b$, $c$ and $\ph$ which satisfy conditions~\ref{thm:bell:a} and~\ref{thm:bell:b} of Theorem~\ref{thm:bell}. Furthermore, $b$ and $c$ are the limits of $b_n$ and $c_n$, $\ph(s) ds$ is the vague limit of $\ph_n(s) ds$, and $a$ is determined by the following property: $a \delta_0(ds) + s \ph(1 / s) ds$ is the vague limit of $a_n \delta_0(ds) + s \ph_n(1 / s) ds$.
\end{lemma}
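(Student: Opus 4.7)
The plan is to extend the proof of Lemma~\ref{lem:limit} to the non-integrable setting by working directly with the representation~\eqref{eq:bell:log} rather than invoking the Rogers-function framework. Since $\Phi$ is not identically zero and each $\laplace f_n(i \xi)$ is zero-free on $\R \setminus \{0\}$, it should follow from the structure of~\eqref{eq:bell:log} (and the arguments below) that $\Phi$ is also zero-free, so that continuous branches of the logarithms may be chosen with $\Psi_n(\xi) := \log \laplace f_n(i \xi) \to \log \Phi(\xi) =: \Psi(\xi)$ pointwise on $\R \setminus \{0\}$, after passing to a subsequence to pin branches.

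The first main step is to produce uniform bounds on the data $(a_n, b_n, c_n, \ph_n)$. Subtracting~\eqref{eq:bell:rlog} at two distinct frequencies $\xi_1 \ne \xi_2$ eliminates $c_n$ and yields
\formula{
 \re \Psi_n(\xi_1) - \re \Psi_n(\xi_2) & = (\xi_2^2 - \xi_1^2) \biggl( a_n + \int_{-\infty}^{\infty} \frac{s \ph_n(s) \, ds}{(\xi_1^2 + s^2)(\xi_2^2 + s^2)} \biggr) .
}
Since $s \ph_n(s) \ge 0$ by the level-crossing condition~\ref{thm:bell:a} and the left-hand side has a finite limit, both $a_n$ and the weighted integrals are uniformly bounded in $n$. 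Under the substitution $s \mapsto 1/s$, this is precisely the tightness of the measure $a_n \delta_0(ds) + s \ph_n(1/s) \, ds$ on $\R$. A parallel argument applied to the imaginary part~\eqref{eq:bell:ilog}, combined with the tightness of $\ph_n(s) \, ds$ on compact subsets of $\R \setminus \{0\}$ already obtained, then yields a uniform bound on $b_n$.

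By Helly's selection theorem, passing to a further subsequence produces $a_n \to a \ge 0$, $b_n \to b$, vague convergence $a_n \delta_0(ds) + s \ph_n(1/s) \, ds \to a \delta_0(ds) + s \ph(1/s) \, ds$ on $\R$, and $\ph_n(s) \, ds \to \ph(s) \, ds$ vaguely on $\R \setminus \{0\}$, for some $\ph$ which I claim satisfies conditions~\ref{thm:bell:a} and~\ref{thm:bell:b}. The level-crossing condition passes to the limit after a Lebesgue-null modification of $\ph$, because the nested structure of the super- and sub-level sets $\{\ph_n > k\}$ and $\{\ph_n < k\}$ is preserved under vague convergence of densities that satisfy ``each $\ph_n - k$ changes sign at most once''. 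The integrability condition~\ref{thm:bell:b} for $\ph$ is inherited from the uniform bound on the mass near infinity. Finally, convergence $c_n \to c$ follows by evaluating~\eqref{eq:bell:rlog} at a fixed $\xi_0 \ne 0$ and passing to the limit on the right-hand side using the tightness estimates.

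The principal obstacle lies in the concluding step: passing to the limit inside the integral in~\eqref{eq:bell:log} to confirm that $\Phi(\xi) = \exp(-a \xi^2 - i b \xi + c + \int (\cdots) \ph(s) \, ds)$. On $|s| \ge 1$ the integrand is of order $|\ph_n(s)|/|s|^3$, and a uniform integrable majorant is provided by the tightness estimate at infinity. On $|s| < 1$ the integrand equals $\ph_n(s)/(i \xi + s)$, and the required uniform integrability of $\ph_n$ in a neighbourhood of $0$ must be extracted from a finer combination of the level-crossing condition (which limits the integer levels that $\ph_n$ can repeatedly cross near the origin) and the weighted tightness bound on $s \ph_n(s) \, ds$. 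Once dominated convergence is in place on both regimes, the representation~\eqref{eq:bell:log} passes to the limit term by term, and the uniqueness clauses in the statement follow from the uniqueness of vague limits on the respective spaces.
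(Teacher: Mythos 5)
Your overall architecture mirrors the paper's: subtract $\re\log\laplace f_n$ at two frequencies to eliminate $c_n$ and obtain a uniform bound on $a_n + \int \min\{1,s^{-4}\}\,s\ph_n(s)\,ds$, then Helly, then pass to the limit in the exponent, then establish the level-crossing condition for the limit and deduce uniqueness. That part is sound. However, you have explicitly left open the crux of the argument, and a second step as stated would not go through.

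First, the uniform integrability of $\ph_n$ near $0$. You write that it ``must be extracted from a finer combination of the level-crossing condition ... and the weighted tightness bound,'' but you do not extract it. This is precisely the step the paper devotes the most care to: starting from the bound $a_n + \int \min\{1,s^{-4}\}\,s\ph_n(s)\,ds \le C_1$, which by itself gives no control of $\ph_n$ near $0$, one uses the level-crossing consequence $\ph_n(s_2) \ge \ph_n(s_1) - 1$ for $s_1 \le s_2$ to turn the integral bound into the pointwise bound $|\ph_n(s)| \le 1 + (|s|+1)^3 C_1$, via $\ph_n(s) \le \int_0^1 (1+\ph_n(s+t))\,dt$ for $s>0$ and a mirror estimate for $s<0$. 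Only then does one get the improved tightness $a_n + \int \min\{|s|^{-1},s^{-4}\}\,s\ph_n(s)\,ds \le C_2$, which is what makes the measures $\min\{|s|^{-1},s^{-4}\}\,s\ph_n(s)\,ds + a_n\delta_\infty(ds)$ weakly relatively compact on $\R\cup\{\infty\}$. Without this your subsequent manipulations have no legitimate measure to converge to.

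Second, passing to the limit in the exponent by ``dominated convergence'' does not work directly. Vague convergence of $\ph_n(s)\,ds$ to $\ph(s)\,ds$ gives no pointwise (a.e.) convergence of $\ph_n$, so the hypotheses of dominated convergence are not met even granting a uniform majorant. The paper instead rewrites the exponent as $\int_{\R\cup\{\infty\}} g_\xi(s)\,\mu_n(ds)$ for a function $g_\xi$ that is bounded on $\R\cup\{\infty\}$ and continuous except at $\{-1,0,1\}$, notes that the limit measure $\mu$ does not charge those three points, and then invokes weak convergence of $\mu_n \to \mu$. This is the correct tool; it is what the uniform pointwise bound from the previous paragraph (plus the ensuing tightness) is designed to enable. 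Your argument as written conflates vague convergence of densities with pointwise convergence and would collapse at this step. The remaining items in your outline --- preservation of the level-crossing condition under vague limits, recovery of $a$ and $\ph$ from $\mu$, and convergence of $b_n$ and $c_n$ (which the paper gets from convergence of $\exp(-ib_n\xi+c_n)\Psi_n(\xi)$ rather than from a separate imaginary-part estimate) --- are fine once the two steps above are supplied.
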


\begin{proof}
As in~\eqref{eq:bell:rlog}, we have
\formula[eq:bell:rlog2]{
 \log |\laplace f_n(i \xi)| & = -a_n \xi^2 + c_n + \int_{-\infty}^\infty \biggl( \frac{s}{\xi^2 + s^2} - \frac{1}{s} \, \ind_{\R \setminus (-1, 1)}(s) \biggr) \ph_n(s) ds .
}
Thus, setting $\xi = \xi_1$ and $\xi = \xi_2$ for $\xi_1, \xi_2 \in \R \setminus \{0\}$, $\xi_1 \ne \xi_2$, in the above formula, we easily find that
\formula{
 \frac{\log |\laplace f_n(i \xi_1)| - \log |\laplace f_n(i \xi_2)|}{\xi_2^2 - \xi_1^2} & = a_n + \int_{-\infty}^\infty \frac{s \ph_n(s)}{(\xi_1^2 + s^2) (\xi_2^2 + s^2)} \, ds .
}
Note that $a_n \ge 0$ and $s \ph_n(s) \ge 0$. By assumption, $f_n(i \xi_1)$ and $f_n(i \xi_2)$ converge to a finite limit, and we may assume that the limit of $f_n(i \xi_2)$ is non-zero. It follows that there is a number $C_1$ such that
\formula[eq:limit:est]{
 a_n + \int_{-\infty}^\infty \min\{1, s^{-4}\} s \ph_n(s) ds & \le C_1
}
for every $n = 1, 2, \ldots$\, Before we proceed, we need to slightly improve~\eqref{eq:limit:est}. The level-crossing condition~\ref{thm:bell:a} in Theorem~\ref{thm:bell} asserts that $\ph_n(s_2) - \ph_n(s_1) \ge -1$ when $s_2 \ge s_1$. Thus, for $s > 0$ we have
\formula{
 \ph_n(s) & \le \int_0^1 (1 + \ph(s + t)) dt \\
 & \le 1 + (s + 1)^3 \int_0^1 \ph_n(s + t) \min\{1, (s + t)^{-3}\} \, ds \le 1 + (s + 1)^3 C_1 .
}
A similar estimate holds for $s < 0$, and hence for all $s \in \R$ we have
\formula[eq:limit:bound]{
 |\ph_n(s)| & \le 1 + (|s| + 1)^3 C_1 .
}
Using the above bound and~\eqref{eq:limit:est}, we find that
\formula[eq:limit:est2]{
 a_n + \int_{-\infty}^\infty \min\{|s|^{-1}, s^{-4}\} s \ph_n(s) ds & \le \int_{-1}^1 |\ph_n(s)| ds + C_1 \le C_2 ,
}
where $C_2 = 2 (1 + \tfrac{15}{4} C_1) + C_1$. We have thus proved that the sequence of measures
\formula[eq:limit:mu]{
 \mu_n(ds) & = \min\{|s|^{-1}, s^{-4}\} s \ph_n(s) ds + a_n \delta_{\infty}(ds)
}
is relatively compact with respect to the topology of weak convergence on $\R \cup \{\infty\}$, the one-point compactification of $\R$. Later we will prove that in fact $\mu_n$ has a weak limit.

Suppose that $\mu$ is a partial limit of $\mu_n$; that is, $\mu$ is the weak limit on $\R \cup \{\infty\}$ of some sub-sequence $\mu_{j(n)}$. Define $a = \mu(\{\infty\})$; clearly, $a \ge 0$. By~\eqref{eq:limit:bound} and~\eqref{eq:limit:mu}, the density functions of $\mu_n(ds)$ on $\R$ are non-negative and bounded by a constant $C_3$ uniformly in $s \in \R$ and $n = 1, 2, \ldots$\, Therefore, $\mu$ is absolutely continuous on $\R$ with respect to the Lebesgue measure, with a density function bounded by $C_3$ almost everywhere. If we denote the density function of $\mu(ds)$ by $\min\{|s|^{-1}, s^{-4}\} s \ph(s)$, then it follows that $\ph(s) \ge 0$ for almost all $s > 0$, $\ph(s) \le 0$ for almost all $s < 0$,
\formula{
 \mu(ds) & = \min\{|s|^{-1}, s^{-4}\} s \ph(s) ds + a \delta_{\infty}(ds) ,
}
and $a$ and $\ph$ satisfy analogues of~\eqref{eq:limit:bound} (almost everywhere) and~\eqref{eq:limit:est2}. For $\xi \in \R \setminus \{0\}$ we define
\formula{
 \Psi_n(\xi) & = \exp(i b_n \xi - c_n) \laplace f_n(i \xi) .
}
By the representation~\eqref{eq:bell} of $\laplace f_n(i \xi)$, we have
\formula{
 \Psi_n(\xi) & = \exp\biggl(-a_n \xi^2 + \int_{-\infty}^\infty \biggl( \frac{1}{i \xi + s} - \biggl(\frac{1}{s} - \frac{i \xi}{s^2} \biggr) \ind_{\R \setminus (-1, 1)}(s) \biggr) \ph_n(s) ds\biggr) \\
 & = \exp\biggl(-\int_{\R \cup \{\infty\}} g_\xi(s) \mu_n(ds) \biggr) ,
}
where, by a short calculation,
\formula{
 g_\xi(s) & = -\frac{1}{i \xi + s} \, \sign s \, \ind_{(-1, 1)}(s) - \biggl( \frac{1}{i \xi + s} - \frac{1}{s} + \frac{i \xi}{s^2} \biggr) s^3 \ind_{\R \setminus (-1, 1)}(s) + \xi^2 \ind_{\{\infty\}}(s) \\
 & = \frac{-\sign s}{i \xi + s} \, \ind_{(-1, 1)}(s) + \frac{\xi^2 s^2}{s^2 (i \xi + s)} \, \ind_{\R \setminus (-1, 1)}(s) + \xi^2 \ind_{\{\infty\}}(s) .
}
In particular, $g_\xi$ is a bounded function on $\R \cup \{\infty\}$, continuous except possibly at $-1$, $0$ and $1$. Since the limiting measure $\mu$ does not charge $\{-1, 0, 1\}$, we have
\formula{
 \lim_{n \to \infty} \Psi_{j(n)}(\xi) & = \exp\biggl(-\lim_{n \to \infty} \int_{\R \cup \{\infty\}} g_\xi(s) \mu_{j(n)}(ds) \biggr) \\
 & = \exp\biggl(-\int_{\R \cup \{\infty\}} g_\xi(s) \mu(ds) \biggr) \\
 & = \exp\biggl(-a \xi^2 + \int_{-\infty}^\infty \biggl( \frac{1}{i \xi + s} - \biggl(\frac{1}{s} - \frac{i \xi}{s^2} \biggr) \ind_{\R \setminus (-1, 1)}(s) \biggr) \ph(s) ds\biggr) .
}
On the other hand,
\formula{
 \lim_{n \to \infty} \bigl(\exp(-i b_{j(n)} \xi + c_{j(n)}) \Psi_{j(n)}(\xi)\bigr) & = \lim_{n \to \infty} \laplace f_n(i \xi) = \Phi(\xi) .
}
In particular, $\exp(-i b_{j(n)} \xi + c_{j(n)})$ has a limit for every $\xi \in \R \setminus \{0\}$, and this limit is not everywhere zero. It is now easy to see that $b_{j(n)}$ and $c_{j(n)}$ necessarily converge to some $b, c \in \R$. We conclude that
\formula{
 \Phi(\xi) & = \exp\biggl(-a \xi^2 - i b \xi + c + \int_{-\infty}^\infty \biggl( \frac{1}{i \xi + s} - \biggl(\frac{1}{s} - \frac{i \xi}{s^2} \biggr) \ind_{\R \setminus (-1, 1)}(s) \biggr) \ph(s) ds\biggr)
}
for $\xi \in \R \setminus \{0\}$, that is, $\Phi$ is indeed given by~\eqref{eq:bell}, with $\ph$ satisfying the integrability condition~\ref{thm:bell:b} in Theorem~\ref{thm:bell}, as well as the level-crossing condition~\ref{thm:bell:a} for $k = 0$.

Since for every $n = 1, 2, \ldots$, $\ph_n$ satisfies the level-crossing condition~\ref{thm:bell:a}, for every $k \in \Z$ the measure $(\ph_{j(n)}(s) - k) ds$ changes its sign at most once. Furthermore, the measures $(\ph_{j(n)}(s) - k) ds$ converge vaguely on $\R$ to $(\ph(s) - k) ds$. Therefore, the latter measure also changes its sign at most once. We conclude that, after modification on a set of zero Lebesgue measure, $\ph$ satisfies the level-crossing condition~\ref{thm:bell:a} of Theorem~\ref{thm:bell} for every $k \in \Z$. The first assertion of the lemma is therefore proved.

The parameters $a$, $b$, $c$ and $\ph$ are determined uniquely by the values of $\Phi$ on $\R \setminus \{0\}$; see Remark~5.5 in~\cite{bell} for further discussion. Therefore, every partial limit of the sequence of measures $\mu_n$ (in the sense of weak convergence on $\R \cup \{\infty\}$) is necessarily equal to the measure $\mu$ described above, and in addition the corresponding partial limits of $b_n$ and $c_n$ exist and are equal to $b$ and $c$, respectively. This proves that $\mu_n$ converges weakly to $\mu$, and also $b_n$ and $c_n$ converge to $b$ and $c$, respectively. By the definitions of $\mu_n$ and $\mu$ and a substitution $r = 1 / s$, this is equivalent to the second assertion of the lemma.
\end{proof}

The following result extends Lemma~\ref{lem:F:inv} to non-integrable bell-shaped functions.

\begin{lemma}
\label{lem:F:inv2}
If $f$ is a strictly bell-shaped function and $\xi \in \R \setminus \{0\}$, then
\formula[eq:F:inv2]{
 \laplace f(i \xi) & = \lim_{n \to \infty} \biggl(\frac{n^{n + 1}}{n! \, \xi^n} \int_{-\infty}^\infty \frac{f^{(n)}(n x)}{1 + i \xi x} \, dx\biggr) .
}
\end{lemma}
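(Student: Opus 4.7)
The plan is to follow the proof of Lemma~\ref{lem:F:inv}, replacing each use of integrability of $f$ by the decay of derivatives furnished by Lemma~\ref{lem:fn:lim}. Since $f$ is strictly bell-shaped, every derivative $f^{(k)}$ with $k \ge 1$ is automatically absolutely integrable (it consists of finitely many monotone arcs joining values that tend to zero) and has integral zero. I first define the Cauchy--Stieltjes-type transform
\formula{
 \Phi(s) & = \int_{-\infty}^\infty \frac{f(x)}{s + ix} \, dx
}
for $s > 0$; the integral converges absolutely because $x f(x) \to 0$ at $\pm\infty$ by Lemma~\ref{lem:fn:lim}, so the integrand is $O(|x|^{-2})$ at infinity.

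The key step is to identify $\Phi$ with the ordinary Laplace transform of $F(\xi) := \laplace f(i \xi)$ on $(0, \infty)$, namely
\formula{
 \Phi(s) & = \int_0^\infty e^{-s \xi} F(\xi) \, d\xi , \qquad s > 0 .
}
In the integrable case this was an immediate consequence of Fubini. In general $F$ may fail to be bounded near $\xi = 0$, but writing $F(\xi) = \laplace f'(i\xi)/(i \xi)$ and using $f' \in L^1$ together with $\laplace f'(0) = \int f' = 0$ shows that $F$ has at most a logarithmic singularity at the origin, so the Laplace integral is absolutely convergent for every $s > 0$. To verify the identity I would integrate by parts once in the definition of $\Phi$ to obtain $\Phi(s) = i \int_{-\infty}^\infty f'(x) \log(s + ix) \, dx$ (the boundary term vanishes since $f(x) \log |x| \to 0$), then differentiate in $s$ and apply Fubini --- now legitimate, since $f' \in L^1$ --- to obtain $\Phi'(s) = -\int_0^\infty e^{-s\xi} \xi F(\xi) \, d\xi$. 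Lemma~\ref{lem:fn:lim} gives $|f'(x)| = o(1/|x|)$, hence $|f'(x)| \log(1 + |x|)$ is integrable; dominated convergence, combined with $\int f' = 0$, then yields $\Phi(s) \to 0$ as $s \to \infty$. Integrating $-\Phi'$ from $s$ to $\infty$ and swapping the order of integration (justified by the estimates on $|F|$) produces the claimed Laplace representation of $\Phi$.

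With this representation in hand, the remainder of the argument is essentially identical to the proof of Lemma~\ref{lem:F:inv}. Post's real inversion formula applied to $\Phi = \laplace F$, with $F$ continuous on $(0, \infty)$, gives $F(\xi) = \lim_n \frac{(-1)^n n^{n+1}}{n! \, \xi^{n+1}} \Phi^{(n)}(n / \xi)$ for every $\xi > 0$. On the other hand, differentiating $\Phi$ under the integral $n$ times and integrating by parts $n$ times yields $\Phi^{(n)}(s) = i^n \int_{-\infty}^\infty f^{(n)}(x) / (s + ix) \, dx$; all boundary contributions vanish because Lemma~\ref{lem:fn:lim} guarantees $x^k f^{(j)}(x) \to 0$ at $\pm\infty$ for arbitrary $k, j \ge 0$. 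The substitution $s = n / \xi$, $x = n y$ then produces~\eqref{eq:F:inv2} for $\xi > 0$, and the case $\xi < 0$ follows by complex conjugation, exactly as at the end of the proof of Lemma~\ref{lem:F:inv}.

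The main obstacle lies in justifying that $\Phi$ is the genuine Laplace transform of $F$ without the integrability of $f$. The delicate points are concentrated at the two ends of the $\xi$-axis: near $\xi = 0$, where one must control the logarithmic singularity of $F$ sharply enough for classical Post inversion to apply, and at $s = \infty$, where $\Phi(\infty) = 0$ has to be established from the refined decay $x f'(x) \to 0$ provided by Lemma~\ref{lem:fn:lim}.
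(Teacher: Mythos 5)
Your opening step contains a genuine error that propagates through the rest of the argument. You claim that the integral $\Phi(s) = \int_{-\infty}^\infty f(x)/(s+ix)\,dx$ converges absolutely because ``$x f(x) \to 0$ at $\pm\infty$ by Lemma~\ref{lem:fn:lim}.'' But Lemma~\ref{lem:fn:lim} gives $x^n f^{(n)}(x) \to 0$, i.e.\ for $n=0$ only $f(x) \to 0$; the strengthened statement $x^{n+1} f^{(n)}(x) \to 0$ is the second conclusion, \eqref{eq:fn:lim:int}, which is proved there \emph{only under the additional assumption that $f$ is integrable}. Since the whole purpose of Section~\ref{sec:bs2} is to drop integrability, you cannot invoke $xf(x) \to 0$, and indeed for non-integrable bell-shaped $f$ the Cauchy--Stieltjes integral $\int f(x)/(s+ix)\,dx$ is in general \emph{not} absolutely convergent (the integrand is only $o(1/|x|)$). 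The same issue undermines your later intermediate claims: the boundary term ``$f(x)\log|x| \to 0$'' and the integrability of ``$|f'(x)|\log(1+|x|)$'' are not provided by Lemma~\ref{lem:fn:lim} either (the lemma gives only boundedness of $xf'(x)$, not integrability), and the asserted ``at most logarithmic singularity'' of $F$ at $\xi = 0$ is unjustified --- the paper only establishes the weaker bound $|\laplace f(i\xi)| \le C(1+\xi)/\xi$ from the improper-integral representation \eqref{eq:lf}.

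The paper sidesteps all of these difficulties by an elegant change of object: instead of trying to show $\Phi(s) = \int_0^\infty e^{-s\xi}\laplace f(i\xi)\,d\xi$, it defines $\Psi(s) = \int_0^\infty e^{-s\xi}\,\xi\,\laplace f(i\xi)\,d\xi$ (morally $\Psi = -\Phi'$), which by \eqref{eq:lf} and Fubini equals $\int_{-\infty}^\infty f(x)/(s+ix)^2\,dx$. The extra power of $(s+ix)$ in the denominator makes this integral absolutely convergent with only $f\to 0$ (not $xf(x)\to 0$), and the factor $\xi$ kills the $1/\xi$ singularity so that $\Psi$ is a genuine absolutely convergent Laplace transform. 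Post inversion then recovers $\xi\laplace f(i\xi)$ from $\Psi$ (with the small cosmetic replacement of $\xi$ by $n\xi/(n-1)$ to clean up indices), and the rest proceeds as in the integrable case. You should replace your $\Phi$ by the paper's $\Psi$ and rebuild the argument on that foundation; as written, your proof does not go through for non-integrable $f$.
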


\begin{proof}
The proof is very similar, except that we need to handle a singularity of $\laplace f(i \xi)$ at $\xi = 0$. Observe that for $\xi > 0$, $(1 + \xi)^{-1} \xi |\laplace f(i \xi)|$ is a bounded function. Indeed, if $f$ is non-decreasing on $(-\infty, -p]$, non-increasing on $[p, \infty)$, and bounded by $M$, then, by~\eqref{eq:lf},
\formula{
 |\laplace f(i \xi)| & \le \frac{|f(-p)|}{\xi} + \int_{-\infty}^{-p} \frac{1}{\xi} \, df(x) + 2 p M + \frac{|f(p)|}{\xi} + \int_p^\infty \frac{1}{\xi} d(-f)(x) \le \frac{4 M}{\xi} + 2 p M .
}
For $s > 0$ we denote
\formula{
 \Psi(s) & = \int_0^\infty e^{-s \xi} \xi \laplace f(i \xi) d\xi .
}
Note that in the proof of Lemma~\ref{lem:F:inv}, for integrable $f$, we defined $\Phi$ to be the Laplace transform of $f(i \xi)$; in this case, $\Psi(s) = -\Phi'(s)$.

We claim that for $s > 0$ we have
\formula[eq:psi2]{
 \Psi(s) & = \int_{-\infty}^\infty \frac{f(x)}{(s + i x)^2} \, dx .
}
This is a consequence of~\eqref{eq:lf} and Fubini's theorem: we write
\formula{
 \Psi(s) & = \int_0^\infty e^{-s \xi} \xi \biggl( -\frac{e^{i \xi p} f(-p)}{i \xi} + \int_{-\infty}^{-p} \frac{e^{-i \xi x}}{i \xi} \, df(x) + \int_{-p}^p e^{-i \xi x} f(x) dx \\
 & \hspace*{17em} + \frac{e^{-i \xi p} f(p)}{i \xi} + \int_p^\infty \frac{e^{-i \xi x}}{i \xi} \, df(x)\biggr) d\xi \\
 & = -\frac{f(-p)}{i (s - i p)} + \int_{-\infty}^{-p} \frac{1}{i (s + i x)} \, df(x) + \int_{-p}^p \frac{f(x)}{(s + i x)^2} \, dx \\
 & \hspace*{17em} + \frac{f(p)}{i (s + i p)} + \int_p^\infty \frac{1}{i (s + i x)} \, df(x) ,
}
and integration by parts leads us back to~\eqref{eq:psi2}.

The next part of the proof is very similar to the proof of Lemma~\ref{lem:F:inv}. Differentiating under the integral, we obtain
\formula{
 \Psi^{(n-1)}(s) & = (-1)^{n - 1} n! \int_{-\infty}^\infty \frac{f(x)}{(s + i x)^{n + 1}} \, dx ,
}
and $n$-fold integration by parts leads to
\formula[eq:dpsi]{
 \Psi^{(n-1)}(s) & = -i^n \int_{-\infty}^\infty \frac{f^{(n)}(x)}{s + i x} \, dx .
}
Post's inversion formula for the Laplace transform tells us that for $\xi > 0$ we have
\formula{
 \xi \laplace f(i \xi) & = \lim_{n \to \infty} \biggl(\frac{(-1)^{n - 1} (n - 1)^n}{(n - 1)!} \, \frac{\Psi^{(n - 1)}((n - 1) / \xi)}{\xi^n}\biggr) .
}
Here we need a minor modification. Since $\xi \laplace f(i \xi)$ is a continuous function of $\xi > 0$, the expressions under the limit in the right-hand side converge locally uniformly in $\xi > 0$. Thus, we may replace $\xi$ by $n \xi / (n - 1)$. This leads to the expression
\formula{
 \xi \laplace f(i \xi) & = \lim_{n \to \infty} \biggl(\frac{(-1)^{n - 1} n^n}{(n - 1)!} \, \frac{\Psi^{(n - 1)}(n / \xi)}{\xi^n}\biggr) ,
}
or, in other words,
\formula{
 \laplace f(i \xi) & = \lim_{n \to \infty} \biggl(\frac{(-i)^n n^n}{(n - 1)! \, \xi^{n + 1}} \int_{-\infty}^\infty \frac{f^{(n)}(x)}{n / \xi + i x} \, dx\biggr) .
}
Substituting $x = n y$, we conclude that
\formula{
 \laplace f(i \xi) & = \lim_{n \to \infty} \biggl(\frac{n^n}{(n - 1)! \, (i \xi)^n} \int_{-\infty}^\infty \frac{f^{(n)}(n y)}{1 + i \xi y} \, dy\biggr) ,
}
which is equivalent to the assertion of the lemma when $\xi > 0$. The case $\xi < 0$ also follows, because $\laplace f(-i \xi) = \overline{\laplace f(i \xi)}$.
\end{proof}

\begin{proof}[Proof of Theorem~\ref{thm:bell2}]
As in the proof for integrable bell-shaped functions, it is sufficient to prove the theorem for strictly bell-shaped functions. Therefore, we suppose that $f$ is a strictly bell-shaped function. As in Section~\ref{sec:bs1}, for $n = 0, 1, 2, \ldots$ we denote the zeroes of $f^{(n)}$ by $n \alpha_{n,k}$, where $k = 1, 2, \ldots, n$, and we let
\formula[eq:gn2]{
 g_n(x) & = \frac{(-1)^n n^{n + 1}}{n!} \, f^{(n)}(n x) \prod_{k = 1}^n (x - \alpha_{n,k}) .
}
Again, $g_n(x) \ge 0$ for all $x \in \R$, and by Lemma~\ref{lem:fn:lim}, $g_n$ converges to zero at $\pm \infty$.

Our goal is again to express (a variant of) the Cauchy--Stieltjes transform of $f^{(n)}$ in terms of a similar transform of $g_n$. This time, however, $g_n$ need not be integrable. Nevertheless, we will show that the Cauchy--Stieltjes transform of $g_n$ is well-defined.

As in the proof in the integrable case, for a fixed $z \in \C \setminus \R$, we have
\formula[eq:pf2]{
 \frac{1}{1 + x z} \prod_{k = 1}^n (x - \alpha_{n,k}) & = \frac{(-1)^n}{z^n (1 + x z)} \prod_{k = 1}^n (1 + \alpha_{n,k} z) + P_n(x)
}
for all $x \in \R$, where $P_n$ is a polynomial of degree at most $n - 1$. Integrating by parts $n$ times and using Lemma~\ref{lem:fn:lim}, we find that, as an improper integral,
\formula[eq:fpn2]{
 \int_{-\infty}^\infty f^{(n)}(n x) P_n(x) dx & = 0
}
(we will momentarily see that in fact the above integral converges absolutely).

Suppose that $n \ge 1$, so that $f^{(n)}$ is absolutely integrable. In this case, by~\eqref{eq:pf2} and~\eqref{eq:fpn2},
\formula[eq:cgn2]{
\begin{aligned}
 \int_{-\infty}^\infty \frac{g_n(x)}{1 + x z} \, dx & = \frac{(-1)^n n^{n + 1}}{n!} \int_{-\infty}^\infty \frac{f^{(n)}(n x)}{1 + x z} \biggl(\prod_{k = 1}^n (x - \alpha_{n,k})\biggr) dx \\
 & = \frac{n^{n + 1}}{n! \, z^n} \biggl(\prod_{k = 1}^n (1 + \alpha_{n,k} z)\biggr) \int_{-\infty}^\infty \frac{f^{(n)}(n x)}{1 + x z} \, dx .
\end{aligned}
}
Here the integral in the right-hand side converges absolutely, the improper integral in~\eqref{eq:fpn2} converges, and thus the left-hand side is well-defined as an improper integral. However, $g_n(x) \ge 0$ and $\im (1 + x z)^{-1} = x |1 + x z|^{-2} \im z$ has constant sign, and $|\im (1 + x z)^{-1}|$ is comparable with $|1 + x z|^{-1}$ as $x \to \pm \infty$. Therefore, convergence of the improper integral of $g_n(x) / (1 + x z)$ automatically implies absolute convergence of this integral. It follows that, just as in the integrable case,
\formula[eq:cf2]{
 \frac{n^{n + 1}}{n! \, z^n} \int_{-\infty}^\infty \frac{f^{(n)}(n x)}{1 + x z} \, dx & = \biggl(\prod_{k = 1}^n \frac{1}{1 + \alpha_{n,k} z}\biggr) \int_{-\infty}^\infty \frac{g_n(x)}{1 + x z} \, dx ,
}
with both integrals absolutely convergent. Furthermore, by Lemma~\ref{lem:F:inv2}, for $z = i \xi$ with $\xi \in \R \setminus \{0\}$, the left-hand side converges as $n \to \infty$ to $\laplace f(i \xi)$.

We have thus proved that for $\xi \in \R \setminus \{0\}$,
\formula[eq:F2]{
 \laplace f(i \xi) & = \lim_{n \to \infty} \biggl(\prod_{k = 1}^n \frac{1}{1 + i \alpha_{n,k} \xi}\biggr) \int_{-\infty}^\infty \frac{g_n(x)}{1 + i \xi x} \, dx .
}
For every $n = 0, 1, 2, \ldots$ the expression under the limit in the right-hand side is the product of Fourier transforms of a locally integrable $\amcm$ function (by Proposition~\ref{prop:amcm}) and a Pólya frequency function (by Proposition~\ref{prop:pff}); that is, it is the Fourier transform of a bell-shaped function described by Theorem~\ref{thm:bell}. By Lemma~\ref{lem:limit2}, $\laplace f(i \xi)$ is given by~\eqref{eq:bell}, with $a \ge 0$, $b \in \R$, $c \in \R$ and $\ph$ satisfying the level-crossing condition~\ref{thm:bell:a} and the integrability condition~\ref{thm:bell:b} of Theorem~\ref{thm:bell}. It remains to observe that condition~\ref{thm:bell:c} of this theorem follows directly from the assumptions. Indeed, by~\eqref{eq:lf1} (with $e^{-i \xi x} - 1$ in the numerator), we have
\formula{
 \xi \im \laplace f(i \xi) & = \im \int_{-\infty}^\infty \frac{e^{-i \xi x} - 1}{i} \, f'(x) dx = \int_{-\infty}^\infty (\cos(\xi x) - 1) f'(x) dx ,
}
and the right-hand side converges to zero as $\xi \to 0$ by the dominated convergence theorem. Similarly, again by~\eqref{eq:lf1},
\formula{
 \re \laplace f(i \xi) & = \re \int_{-\infty}^\infty \frac{e^{-i \xi x} - 1}{(i \xi)^2} \, f''(x) dx = \int_{-\infty}^\infty \frac{1 - \cos(\xi x)}{\xi^2} \, f''(x) dx ,
}
and the right-hand side is integrable with respect to $\xi \in (-1, 1)$.
\end{proof}

\begin{remark}
\label{rem:aux}
As a supplement to the above proof, we make the following observation that will be needed in the next section. Suppose that $f$ is a strictly bell-shaped function. By Theorem~\ref{thm:bell2}, $f$ is described by Theorem~\ref{thm:bell}, that is, the Fourier transform $\laplace f(i \xi)$ is given by~\eqref{eq:bell} for some $a$, $b$, $c$ and $\ph$. In the above proof we expressed $\laplace f(i \xi)$ as a limit of 
\formula{
 & \biggl(\prod_{k = 1}^n \frac{1}{1 + i \alpha_{n,k} \xi}\biggr) \int_{-\infty}^\infty \frac{g_n(x)}{1 + i \xi x} \, dx ,
}
see~\eqref{eq:F2}, and we observed that for every $n = 1, 2, \ldots$ the above expression is again given by the right-hand side of~\eqref{eq:bell} for some parameters $a_n$, $b_n$, $c_n$ and $\ph_n$. It is clear that $a_n = 0$. Thus, as a consequence of Lemma~\ref{lem:limit2} and~\eqref{eq:F2}, we have
\formula[eq:aux:lim]{
 a \delta_0(ds) + s \ph(1 / s) ds & = \lim_{n \to \infty} (s \ph_n(1 / s) ds) ,
}
with the vague limit of measures in the right-hand side. Furthermore, as discussed in the introduction, if
\formula[eq:aux:ph]{
 \ph_{n,h}(s) & = \sum_{k = 1}^n \ind_{(0, \infty)}(\alpha_{n,k}) \ind_{[1/\alpha_{n,k}, \infty)}(s) - \sum_{k = 1}^n \ind_{(-\infty, 0)}(\alpha_{n,k}) \ind_{(-\infty, 1/\alpha_{n,k}]}(s)
}
is the function $\ph$ that corresponds (as in Proposition~\ref{prop:pff}) to the Pólya frequency function with Fourier transform $\prod_{k = 1}^n (1 + i \alpha_{n,k} \xi)^{-1}$, then we have
\formula[eq:aux:est]{
\begin{aligned}
 \ph_{n,h}(s) \le \ph_n(s) & \le \ph_{n,h}(s) + 1 & \text{for almost all $s > 0$,} \\
 \ph_{n,h}(s) - 1 \le \ph_n(s) & \le \ph_{n,h}(s) & \text{for almost all $s < 0$.}
\end{aligned}
}
\end{remark}

%
%

\section{Further results on bell-shaped functions}
\label{sec:bs3}

In this short section we prove additional results stated in the Introduction. 

\begin{proof}[Proof of Corollary~\ref{cor:roots}]
Suppose that $f$ is a weakly bell-shaped function described by Theorem~\ref{thm:bell} with parameters $a$, $b$, $c$ and $\ph$, and $n = 1, 2, \ldots$\, Let $g$ be the weakly bell-shaped function as in Theorem~\ref{thm:bell}, with parameters $a/n$, $b/n$, $c/n$ and $\ph/n$. It is straightforward to verify that these parameters indeed satisfy conditions~\ref{thm:bell:a}, \ref{thm:bell:a} and~\ref{thm:bell:c}, and that the $n$-fold convolution of $g$ is equal to $f$.
\end{proof}

\begin{proof}[Proof of Corollary~\ref{cor:walk}]
Again, suppose that $f$ is a weakly bell-shaped function described by Theorem~\ref{thm:bell} with parameters $a$, $b$, $c$ and $\ph$. For $n = 1, 2, \ldots$, the $n$-fold convolution $f_n$ of $f$ has Fourier transform given by~\eqref{eq:bell}, with parameters $n a$, $n b$, $n c$ and $n \ph$. Observe that $f_n$ is weakly bell-shaped if and only if $n \ph$ (after modification on a set of zero Lebesgue measure) satisfies the level-crossing condition~\ref{thm:bell:a}. Thus, $f_n$ is weakly bell-shaped for every $n = 1, 2, \ldots$ if and only if (again after modification on a set of zero Lebesgue measure) $\ph$ is a non-decreasing function.
\end{proof}

Corollary~\ref{cor:zeroes} is an immediate consequence of Proposition~\ref{prop:zeroes}, proved below. Corollary~\ref{cor:bell} does not require proof, and Corollary~\ref{cor:der} was already proved in the introduction.

\begin{proof}[Proof of Proposition~\ref{prop:zeroes}]
We will use the observation made in Remark~\ref{rem:aux}. Suppose that $n \alpha_{n,k}$ are the zeroes of $f^{(n)}$, the Fourier transform of $f$ is given by~\eqref{eq:bell}, and $\ph_n$ and $\ph_{n,h}$ are defined as in Remark~\ref{rem:aux}. In particular, $\ph_{n,h}$ is an integer-valued function which has unit jumps at $1 / \alpha_{n,k}$ (as long as $\alpha_{n,k} \ne 0$) for $k = 1, 2, \ldots, n$ (see~\eqref{eq:aux:ph}), we have
\formula[eq:aux:est2]{
 0 \le s \ph_{n,h}(s) & \le s \ph_n(s) \le s \ph_{n,h}(s) + |s|
}
for almost all $s \in \R$ (see~\eqref{eq:aux:est}), and $s \ph_n(1 / s) ds$ converges vaguely to $a \delta_0(ds) + s \ph(1 / s) ds$ (see~\eqref{eq:aux:lim}).

By~\eqref{eq:aux:est2}, the sequence of measures $\mu_n(ds) = s \ph_{n,h}(1 / s) ds$ is relatively compact with respect to the topology of vague convergence. Suppose that $\mu$ is a partial limit of this sequence, a vague limit of a sub-sequence $\mu_{j(n)}$. Again by~\eqref{eq:aux:est2},
\formula{
 a \delta_0 + \max\{0, s \psi(1 / s) - |s|\} ds \le \mu(ds) & \le a \delta_0(ds) + s \ph(1 / s) ds .
}
Therefore, $\mu(\{0\}) = a$, and $\mu$ is absolutely continuous on $\R \setminus \{0\}$ with respect to the Lebesgue measure. Define $\ph_h$ so that $s \ph_h(1 / s)$ is the density function of $\mu$ on $\R \setminus \{0\}$. Then it follows that $\ph_h(s) ds$ is the vague limit of $\ph_{n,h}(s) ds$ on $\R \setminus \{0\}$. It is now easy to see that, after modification on a set of zero Lebesgue measure, $\ph_h$ is non-decreasing, only takes integer values, and satisfies $\ph_h(0) = 0$. In other words, there is a sequence $s_k \in [-\infty, \infty]$, $k \in \Z$, such that $s_0 = 0$, $s_k$ is non-decreasing, and
\formula{
 \ph_h(s) & = \sum_{k = 1}^\infty \ind_{[s_k, \infty)}(s) - \sum_{k = -\infty}^{-1} \ind_{(-\infty, s_k]}(s) .
}
Furthermore, since $\ph$ satisfies the integrability condition~\ref{thm:bell:b} in Theorem~\ref{thm:bell}, we have $s_1 > 0$ and $s_{-1} < 0$ (see the proof of Lemma~5.4 in~\cite{bell}). We let $\alpha_k = 1 / s_k$ for $k \ne 0$ (with the convention that $1 / \pm \infty = 0$), and we claim that in the sense of vague limit of measures,
\formula{
 \lim_{n \to \infty} \sum_{k = 1}^{j(n)} \alpha_{j(n),k}^2 \delta_{\alpha_{j(n),k}}(ds) & = 2 a \delta_0(ds) + \sum_{k \in \Z \setminus \{0\}} \alpha_k^2 \delta_{\alpha_k}(ds) .
}
The above claim is easily seen to be equivalent to the assertion of the proposition.

Suppose that $u$ is a smooth, non-negative, compactly supported function on $\R$, and define $v(s) = s^2 u(s)$ and $w(s) = v'(s) / s$; we extend $w$ continuously, so that $w(0) = v''(0) = 2 u(0)$. Since $\mu$ is the vague limit of a subsequence $\mu_{j(n)}$ and $w$ is continuous and compactly supported, we have
\formula[eq:aux:wlim]{
 \lim_{n \to \infty} \int_\R w(s) \mu_{j(n)}(ds) & = \int_\R w(s) \mu(ds) .
}
Using the definitions of $\mu_n$ and $\ph_{n,h}$, we evaluate the left-hand side of~\eqref{eq:aux:wlim} (with $j(n)$ replaced by $n$):
\formula{
 \int_\R w(s) \mu_n(ds) & = \int_{-\infty}^\infty v'(s) \ph_{n,h}(1 / s) ds \\
 & = \sum_{k = 1}^n \ind_{(0, \infty)}(\alpha_{n,k}) \int_0^{\alpha_{n,k}} v'(s) ds - \sum_{k = 1}^n \ind_{(-\infty, 0)}(\alpha_{n,k}) \int_{\alpha_{n,k}}^0 v'(s) ds \\
 & = \sum_{k = 1}^n (v(\alpha_{n,k}) - v(0)) = \sum_{k = 1}^n \alpha_{n,k}^2 u(\alpha_{n,k}) .
}
The right-hand side of~\eqref{eq:aux:wlim} is evaluated in a similar way, using the properties of $\mu$ established above:
\formula{
 \int_\R w(s) \mu(ds) & = a w(0) + \int_{-\infty}^\infty v'(s) \ph_h(1 / s) ds \\
 & = 2 a u(0) + \sum_{k = 1}^\infty \int_0^{\alpha_k} v'(s) ds - \sum_{k = -\infty}^{-1} \int_{\alpha_k}^0 v'(s) ds \\
 & = 2 a u(0) + \sum_{k \in \Z \setminus \{0\}} (v(\alpha_k) - v(0)) = 2 a u(0) + \sum_{k \in \Z \setminus \{0\}} \alpha_k^2 u(\alpha_k)
}
(in particular, the sum in the right-hand side is absolutely convergent). We have thus proved that
\formula{
 \lim_{n \to \infty} \sum_{k = 1}^{j(n)} \alpha_{j(n),k}^2 u(\alpha_{j(n),k}) & = 2 a u(0) + \sum_{k \in \Z \setminus \{0\}} \alpha_k^2 u(\alpha_k)
}
whenever $u$ is smooth, non-negative and compactly supported. By approximation, the above equality also holds for general continuous and compactly supported $u$, which is precisely our claim.
\end{proof}

%
%

\section{Whale-shaped functions}
\label{sec:ws}

We conclude the article with a sketch of the proof of our result on whale-shaped functions, Theorem~\ref{thm:whale}.

\begin{proof}[Sketch of the proof of Theorem~\ref{thm:whale}]
Suppose that $f$ satisfies condition~\ref{thm:whale:b} of the theorem, that is, $f$ is the convolution of $m$ densities of exponential distributions and a completely monotone function $g$ on $(0, \infty)$ (which is integrable near $0$ and which converges to zero at infinity). Then, by a proposition proved in~\cite{simon} (see p.~889 therein), $f^{(n)}$ changes its sign $\min\{n,m\}$ times. Furthermore, it is easy to see that $f^{(n)}(0^+) = 0$ for $n = 0, 1, 2, \ldots, m - 1$, and hence $f$ is whale-shaped of order $m$.

Conversely, suppose that $f$ is a whale-shaped function on $(0, \infty)$ of order $m \ge 0$. We will prove that $f$ satisfies condition~\ref{thm:whale:b}. The argument follows closely the proof of Theorem~\ref{thm:bell2} in Section~\ref{sec:bs2}, and so we omit some details.

By an analogue of Lemma~\ref{lem:fn:lim}, for $n = 0, 1, 2, \ldots$,
\formula[eq:whale:inf]{
 & x^n f^{(n)}(x) \text{ converges to zero as } x \to \infty.
}
We claim that for $n = m, m + 1, m + 2, \ldots$
\formula[eq:whale:zero]{
 & x^{n - m} f^{(n)}(x) \text{ is integrable near zero and } \lim_{x \to 0^+} (x^{n - m + 1} f^{(n)}(x)) = 0 .
}
If $n = m = 0$, then integrability of $f^{(n)} = f$ near $0$ is one of the assumptions. If $n = m > 0$, $f^{(m)}$ is the derivative of $f^{(m - 1)}$, which, by assumption, is bounded and monotone in some right neighbourhood of zero. Thus, $f^{(n)} = f^{(m)}$ is integrable near $0$. We now proceed by induction. Suppose that for some $n > m$ we have already proved that $x^{n - m - 1} f^{(n - 1)}$ is integrable near $0$. By assumption, $f^{(n - 1)}$ and $f^{(n)}$ have constant signs in a right neighbourhood of $0$, and hence $f^{(n - 1)}$ has a constant sign and it is monotone near $0$. This property and integrability of $x^{n - m - 1} f^{(n - 1)}$ imply that $\lim_{x \to 0^+} (x^{n - m} f^{(n - 1)}(x)) = 0$. By integration by parts, the improper integral $\int_0^1 x^{n - m} f^{(n)}(x) dx$ exists. Since $f^{(n)}$ has constant sign in some right neighbourhood of $0$, this integral in fact converges absolutely. Our claim follows now by induction.

With~\eqref{eq:whale:inf} and~\eqref{eq:whale:zero} at hand, we proceed as in the proof of Theorem~\ref{thm:bell2}. Suppose that $n \ge m$, so that $f^{(n)}$ has $m$ zeroes in $(0, \infty)$, denoted by $n \alpha_{n,k}$, $k = 1, 2, \ldots, m$. As before, we let
\formula{
 g_n(x) & = \frac{(-1)^n n^{n + 1}}{n!} \, x^{n - m} f^{(n)}(n x) \prod_{k = 1}^m (x - \alpha_{n,k}) .
}
Note that this is the same definition as~\eqref{eq:gn2} if we agree that $\alpha_{n,k} = 0$ for $k > m$. Clearly, $g_n(x) \ge 0$ for all $x \in (0, \infty)$. For a fixed $z \in \C \setminus (-\infty, 0]$, we have
\formula{
 \frac{1}{1 + x z} \prod_{k = 1}^m (x - \alpha_{n,k}) & = \frac{(-1)^m}{z^m (1 + x z)} \prod_{k = 1}^m (1 + \alpha_{n,k} z) + P_n(x)
}
for all $x \in (0, \infty)$, where $P_n$ is a polynomial of degree at most $m - 1$. We claim that $n$-fold integration by parts leads to
\formula{
 \int_0^\infty x^{n - m} f^{(n)}(n x) P_n(x) dx & = 0 .
}
Indeed, let $h_{n,k}(x) = f^{(n - k)}(n x) (x^{n - m} P_n(x))^{(k - 1)}$ for $k = 1, 2, \ldots, n$. As $x \to \infty$, $h_{n,k}(x)$ is bounded by a constant times $x^{n - k} |f^{(n - k)}(n x)|$, and hence, by~\eqref{eq:whale:inf}, $h_{n,k}(x)$ converges to zero as $x \to \infty$. Similarly, for $k = 1, 2, \ldots, n - m$, the function $h_{n,k}(x)$ is bounded by a constant times $x^{n - m - k + 1} |f^{(n - k)}(n x)|$ as $x \to 0^+$, and hence, by~\eqref{eq:whale:zero}, $h_{n,k}(0^+) = 0$. Finally, if $k = n - m + 1, n - m + 2, \ldots, n$, then $h_{n,k}(x)$ is bounded by a constant times $|f^{(n - k)}(n x)|$ as $x \to 0^+$, and since $n - k < m$, we have $h_{n,k}(0^+) = 0$ by assumption. Our claim follows.

We have thus found that for $n = m, m + 1, m + 2, \ldots$ we have
\formula{
 \int_0^\infty \frac{g_n(x)}{1 + x z} \, dx & = \frac{(-1)^n n^{n + 1}}{n!} \int_0^\infty \frac{x^{n - m} f^{(n)}(n x)}{1 + x z} \biggl(\prod_{k = 1}^m (x - \alpha_{n,k})\biggr) dx \\
 & = \frac{(-1)^{n - m} n^{n + 1}}{n! \, z^m} \biggl(\prod_{k = 1}^m (1 + \alpha_{n,k} z)\biggr) \int_0^\infty \frac{x^{n - m} f^{(n)}(n x)}{1 + x z} \, dx .
}
Again we find that the above integrals are in fact absolutely convergent, and
\formula[eq:ws:aux]{
 \frac{(-1)^{n - m} n^{n + 1}}{n! \, z^m} \int_0^\infty \frac{x^{n - m} f^{(n)}(n x)}{1 + x z} \, dx & = \biggl(\prod_{k = 1}^m \frac{1}{1 + \alpha_{n,k} z}\biggr) \int_0^\infty \frac{g_n(x)}{1 + x z} \, dx .
}
Now we need an analogue of Lemma~\ref{lem:F:inv2}, with a different choice of the $n$-fold antiderivative of $1 / (s + i x)$ in formula~\eqref{eq:dpsi} in the proof: $(-i x / s)^{n - m} / (s + i x)$ rather than $1 / (s + i x)$. This variant applies to our case, with $z = i \xi$ and $\xi \in \R \setminus \{0\}$, and it implies that the left-hand side of~\eqref{eq:ws:aux} converges as $n \to \infty$ to $\laplace f(i \xi)$. We omit the details.

Above we have proved that for $\xi \in \R \setminus \{0\}$,
\formula{
 \laplace f(i \xi) & = \lim_{n \to \infty} \biggl(\prod_{k = 1}^m \frac{1}{1 + i \alpha_{n,k} \xi}\biggr) \int_0^\infty \frac{g_n(x)}{1 + i \xi x} \, dx .
}
This is a perfect analogue of~\eqref{eq:F2}. The remaining part of the proof is not much different from the corresponding argument in the proof of Theorem~\ref{thm:bell2}, and is therefore omitted.
\end{proof}

%
%

%
%

\end{document}